\documentclass[11pt]{article}
\usepackage{amsmath, amscd, amssymb, latexsym, epsfig, color, amsthm}
\setlength{\textwidth}{6.4in}
\setlength{\textheight}{8.5in}
\setlength{\topmargin}{0pt}
\setlength{\headsep}{0pt}
\setlength{\headheight}{0pt}
\setlength{\oddsidemargin}{0pt}
\setlength{\evensidemargin}{0pt}
\flushbottom
\pagestyle{plain}

\numberwithin{equation}{section}

\def\proof{\smallskip\noindent {\it Proof: \ }}
\def\endproof{\hfill$\square$\medskip}
\newtheorem{theorem}{Theorem}[section]
\newtheorem{proposition}[theorem]{Proposition}

\newtheorem{corollary}[theorem]{Corollary}

\newtheorem{lemma}[theorem]{Lemma}

\newtheorem{problem}[theorem]{Problem}

\theoremstyle{definition}
\newtheorem{definition}[theorem]{Definition}
\newtheorem{example}[theorem]{Example}
\newtheorem{remark}[theorem]{Remark}

\DeclareMathOperator\lk{\mathrm{lk}}

\DeclareMathOperator\st{\mathrm{st}}
\DeclareMathOperator\cost{\mathrm{cost}}

\DeclareMathOperator{\Hilb}{\mathrm{Hilb}}
\DeclareMathOperator{\Soc}{\mathrm{Soc}}
\DeclareMathOperator\depth{\mathrm{depth}}

\newcommand{\complet}[1]{\hat{#1}}

\newcommand{\SB}{\mathcal{SB}}

\newcommand{\field}{\mathbf{k}}

\newcommand{\R}{{\mathbb R}}
\newcommand{\Q}{{\mathbb Q}}
\newcommand{\Z}{{\mathbb Z}}
\newcommand{\ZZ}{{\mathbb Z}}

\newcommand{\C}{{\mathcal C}}
\newcommand{\K}{{\mathcal K}}
\newcommand{\T}{{\mathcal T}}
\newcommand{\Sm}{{\mathcal S}}
\newcommand{\M}{{\mathcal M}}
\newcommand{\HH}{{\mathcal H}}

\newcommand{\mideal}{\ensuremath{\mathfrak{m}}}

\newcommand{\Ker}{\ensuremath{\mathrm{Ker}}\hspace{1pt}}
\newcommand{\Coker}{\ensuremath{\mathrm{Coker}}\hspace{1pt}}

\newcommand{\halffloor}{\lfloor \frac{d}{2}\rfloor}


\title{$g$-vectors of manifolds with boundary}
\author{Isabella Novik\thanks{Research is partially\textsl{}
supported by NSF grant DMS-1664865 and by Robert R.~\& Elaine F.~Phelps Professorship in Mathematics}\\
\small Department of Mathematics\\[-0.8ex]
\small University of Washington\\[-0.8ex]
\small Seattle, WA 98195-4350, USA\\[-0.8ex]
\small \texttt{novik@math.washington.edu}
\and Ed Swartz\\
\small Department of Mathematics \\[-0.8ex]
\small Cornell University \\[-0.8ex] 
\small Ithaca, NY 14853-4201, USA \\[-0.8ex]
\small \texttt{ebs22@cornell.edu } }
\begin{document}
\maketitle

\begin{abstract} 
We extend several $g$-type theorems for connected, orientable homology manifolds without boundary to manifolds with boundary. As applications of these results we obtain K\"uhnel-type bounds on the Betti numbers as well as on  certain weighted sums of Betti numbers of manifolds with boundary. Our main tool is the completion $\complet\Delta$ of a manifold with boundary $\Delta$; it is obtained from $\Delta$ by coning off the boundary of $\Delta$ with a single new vertex. We show that despite the fact that $\complet{\Delta}$ has a singular vertex, its Stanley--Reisner ring shares a few properties with the Stanley--Reisner rings of  homology spheres.   We close with a discussion of a connection between three lower bound theorems for manifolds,  PL-handle decompositions, and surgery.

\end{abstract}

\section{Introduction} \label{sec:Intro}

This paper is devoted to the study of face numbers of manifolds with boundary. While \cite{Murai-Novik-bdry} established the best to-date lower bounds on the $g$-numbers of manifolds with boundary, our emphasis here is on {\em Macaulay-type} inequalities involving the $g$-numbers.

The quest for characterizing possible $f$-vectors of various classes of simplicial complexes or at least establishing significant necessary conditions started about fifty years ago with McMullen's $g$-conjecture \cite{McMullen-71} that posited a complete characterization of $f$-vectors of simplicial polytopes. In ten years, this conjecture became a theorem \cite{Billera-Lee-81, Stanley-80}. This gave rise to algebraic and combinatorial versions of the $g$-conjecture for simplicial spheres. Very recently Adiprasito \cite{Adiprasito-g} announced a proof of the most optimistic algebraic version of this conjecture. In the late 1990s, Kalai proposed a far reaching generalization of the sphere $g$-conjecture to simplicial manifolds without boundary.  The authors proved that the (weaker) algebraic version of the $g$-conjecture for spheres implies all the enumerative consequences of Kalai's manifold $g$-conjecture, see \cite{Novik-Swartz-09:Gorenstein}. Furthermore, Murai and Nevo \cite{Murai-Nevo-14} establsihed a $\tilde{g}$-variation of this result. In this paper we extend both of these statements to manifolds with boundary.

The main idea (that goes back to Kalai \cite[Section 10]{Kalai-87}) is as follows: given a simplicial complex $\Delta$ whose geometric realization is a connected, orientable, homology manifold with boundary, we define the completion of $\Delta$ --- a complex $\complet{\Delta}$ obtained from $\Delta$ by coning the boundary of $\Delta$ (all components of it) with a single new vertex $v_0$. We then show that, despite the fact that $\complet{\Delta}$ has a singular vertex,  a certain quotient of a generic Artinian reduction of the Stanley--Reisner ring of $\complet{\Delta}$ enjoys several properties that Artinian reductions of the Stanley--Reisner rings of simplicial spheres have. This result together with the computation of the Hilbert function of this quotient allows us to extend virtually all known results on face numbers of orientable manifolds without boundary to the class of orientable manifolds with boundary.

The main results and the structure of the paper are as follows. In Section 2 we discuss basics of simplicial complexes and Stanley--Reisner rings. In particular, we review Gr\"abe's theorem on local cohomology \cite{Grabe-84} and introduce our main player --- the completion $\complet{\Delta}$ of a manifold with boundary $\Delta$. Section 3 is devoted to the Gorensteiness and the weak Lefschetz property of a certain quotient of the Stanley--Reisner ring of $\complet{\Delta}$, see Theorem \ref{thm-Gorenstein} and Corollary \ref{WLP}. Section 4 computes the Hilbert function of this quotient, Theorem \ref{soc-and-h''}. This result is used in Section 5 to establish two versions of $g$-theorems for manifolds with boundary, Theorems \ref{g-theorem-v1} and \ref{g-theorem-v2}. In Section 6 we use these $g$-results to derive K\"uhnel-type bounds on the Betti numbers and certain weighted sums of Betti numbers of manifolds with boundary. Finally, in Section 7, we examine the combinatorial and topological consequences of some of the known inequalities for $f$-vectors of homology manifolds with boundary when they are sharp. More specifically, we discuss a connection between three lower bound theorems for manifolds, PL-handle decompositions, and surgery, see Theorems \ref{h''=0 -> handle decomposition}, \ref{global minimal g2}, and \ref{minimal-tilde-g}.

\section{Preliminaries} \label{sec:Prelim}
In this section we review the necessary background material on simplicial complexes and their Stanley--Reisner rings with a special emphasis on homology manifolds with and without boundary as well as on singular simplicial complexes that have only one singular vertex. We refer the reader to  \cite[Chapter 2]{Stanley-96} and the papers \cite{Novik-Swartz-09:Socles, Novik-Swartz-12} for more details on the subject. 

\subsection{Simplicial complexes: homology manifolds and their completions}
A {\em simplicial complex} $\Delta$ on the vertex set $V$ is a collection of subsets of $V$ that is closed under inclusion and contains all singletons $\{v\}$ for $v\in V$. The elements of $\Delta$ are called {\em faces}. The maximal faces (with respect to inclusion) are called {\em facets}. The {\em dimension of a face} $F\in \Delta$ is $\dim F:=|F|-1$ and
the {\em dimension of $\Delta$} is the maximal dimension of its faces. A complex is {\em pure} if all of its facets have the same dimension. A complex $\Delta$ is {\em $j$-neighborly} if every $j$-element subset of $V$ is a face of $\Delta$. 


Let $\Delta$ be a simplicial complex and let $F$ be a face of $\Delta$. The {\em star} and the {\em link} of $F$ in $\Delta$ are the following subcomplexes of~$\Delta$: 
$$
\st F=\st_\Delta F:=\{G\in \Delta \, \mid \, G\cup F\in \Delta\}, \quad
 \lk F=\lk_\Delta F:=\{G\in \st_\Delta F \, \mid \, G\cap F=\emptyset\}.
$$
In particular, the link of the empty face is the complex $\Delta$ itself. We refer to the links of non-empty faces as {\em proper links}. The {\em contrastar} of $F$ in $\Delta$ (also known as the {\em deletion} of $F$ from $\Delta$) is $\cost F=\cost_\Delta F:=\{G\in \Delta \mid G\not\supseteq F\}$.  If $v$ is a vertex, it is customary to write $v\in\Delta$, $\st v$, $\lk v$, and $\cost v$ instead of $\{v\}\in\Delta$, $\st \{v\}$, $\lk\{v\}$, and $\cost\{v\}$. (In fact, we will sometimes write $\Delta-v$ instead of $\cost\{v\}$.)
Also, if $v_0\notin V$ is a new vertex, then the {\em cone} over $\Delta$ with apex $v_0$ is $v_0\ast\Delta:=\Delta \cup \{v_0\cup F \mid F\in \Delta\}$.

Throughout the paper, we fix an {\em infinite} field $\field$. We denote by $\tilde{H}_\ast(\Delta;\field)$ the {\em reduced simplicial homology} of $\Delta$ with coefficients in $\field$ and by $\tilde{\beta}_i(\Delta):=\dim_\field \tilde{H}_i(\Delta;\field)$ the $i$-th {\em reduced Betti number}.

One of the central objects of this paper is a {\em $\field$-homology manifold}. A pure $(d-1)$-dimensional simplicial complex  $\Delta$ is a {\em $\field$-homology manifold without boundary} (or a {\em closed $\field$-homology manifold}) if the homology (computed over $\field$) of every proper link of $\Delta$, $\lk_\Delta F$, coincides with the homology of a $(d-1-|F|)$-dimensional sphere. In this case, we write $\partial \Delta = \emptyset$. Similarly, a pure $(d-1)$-dimensional simplicial complex  $\Delta$ is a {\em $\field$-homology manifold with boundary} if every proper link of $\Delta$,  $\lk_\Delta F$, has the homology of a $(d-1-|F|)$-dimensional sphere or a ball (over $\field$), and the {\em boundary complex} of $\Delta$, i.e., 
\[ \partial \Delta:= \left\{F \in \Delta \mid \tilde{H}_\ast(\lk_\Delta F; \field)=0\right\}\cup\{\emptyset\}, \]
is a $(d-2)$-dimensional $\field$-homology manifold without boundary. The faces of $\partial\Delta$ are called {\em boundary faces}. The non-boundary faces of $\Delta$ are called {\em interior faces}.  When the field plays no role we simply call $\Delta$ a homology manifold (with or without boundary).

The prototypical example of a homology manifold (with or without boundary) is a triangulation of a topological manifold (with or without boundary). A connected $\field$-homology manifold $\Delta$ is {\em orientable} if the top homology of the pair $(\Delta, \partial \Delta)$ is $1$-dimensional. In this case,  $(\Delta, \partial \Delta)$ satisfies the usual Poincar\'e--Lefschetz duality associated with orientable compact manifolds. Note that an arbitrary triangulation of any topological manifold (orientable or not, with or without boundary) is an {\em orientable} $\Z/2\Z$-homology manifold.

A $\field$-homology $(d-1)$-sphere is a $(d-1)$-dimensional $\field$-homology manifold without boundary that has the same homology as the $(d-1)$-dimensional sphere. A $\field$-homology $(d-1)$-ball is a $(d-1)$-dimensional $\field$-homology manifold with boundary whose homology is trivial and whose boundary complex is a $\field$-homology $(d-2)$-sphere.  The contrastar of any vertex in a $\field$-homology $(d-1)$-sphere is a $\field$-homology $(d-1)$-ball. Furthermore, every proper link of a $\field$-homology manifold without boundary is a $\field$-homology sphere, while a proper link of a $\field$-homology manifold with boundary is a $\field$-homology sphere or  ball.

Let $\Delta$ be a $\field$-homology manifold with or without boundary and let $v_0\not\in V$ be a new vertex. A key to most of our proofs is the {\em completion of $\Delta$}, $\complet{\Delta}$, defined as follows:
\[
\complet{\Delta}:=\Delta\cup (v_0\ast\partial\Delta).
\]
 Note that we define $v_0\ast\emptyset =\emptyset$; hence if $\Delta$ is a homology manifold without boundary, then $\complet{\Delta}=\Delta$. 

A pure simplicial complex $\Gamma$ is a {\em complex with at most one singularity} if all of the vertex links of $\Gamma$ but possibly one are $\field$-homology balls or spheres. This exceptional vertex is called a {\em singular} vertex; the other vertices are called {\em non-singular}. For instance, if $\Delta$ is a $\field$-homology manifold with boundary, $\complet{\Delta}$ is a completion of $\Delta$, and $v\neq v_0$, then both $\complet{\Delta}$ and $\cost_{\complet{\Delta}}v$ are complexes with (at most) one singular vertex, namely, $v_0$.

When only topological properties of a space are relevant we may use capital roman letters.  For instance, ``If $X$ is a $d$-dimensional ball, then its boundary $Y$ is a $(d-1)$-dimensional sphere.''


\subsection{Face numbers and the Stanley--Reisner rings} Let $\Delta$ be a $(d-1)$-dimensional simplicial complex on the vertex set $V$. Denote by $f_i(\Delta)$ the number of $i$-dimensional faces of $\Delta$.
The {\em $f$-vector} of $\Delta$ is $f(\Delta)=(f_{-1}(\Delta), f_0(\Delta),\ldots, f_{d-1}(\Delta))$ and the {\em $h$-vector} of $\Delta$ is $h(\Delta)=(h_0(\Delta), h_1(\Delta), \ldots, h_d(\Delta))$, where 
\[h_i(\Delta):=\sum_{j=0}^i (-1)^{i-j}\binom{d-j}{d-i}f_{j-1}(\Delta).\]

Let $A=\field[x_v \mid v\in V]$ be a polynomial ring, and let $\mideal=(x_v \mid v\in V)$ be the graded maximal ideal of $A$. For $F\subseteq V$, write $x_F:=\prod_{v\in F} x_v$. The {\em Stanley--Reisner ideal} $I_\Delta$ of $\Delta$ is the ideal of $A$ defined by 
\[
I_\Delta=(x_F \mid F\subseteq V, F\notin\Delta).\]
The {\em Stanley--Reisner ring} $\field[\Delta]$ of $\Delta$ (over $\field$) is the quotient ring $\field[\Delta]=A/I_\Delta$. In particular, $\field[\Delta]$ is a graded ring; it is also a graded $A$-module. If $\dim\Delta=d-1$, then the Krull dimension of $\field[\Delta]$ is $d$ and the Hilbert series of $\field[\Delta]$ is given by   (\cite[Chapter II.1]{Stanley-96})
\[
\Hilb(\field[\Delta]; \lambda)=\frac{\sum_{i=0}^d h_i(\Delta)\lambda^i}{(1-\lambda)^d}.\]

A {\em linear system of parameters} (or {\em l.s.o.p} for short) for $\field[\Delta]$ is a sequence $\Theta=\theta_1,\ldots,\theta_d$ of $d=\dim\Delta + 1$ linear forms in $\mideal$ such that \[\field(\Delta,\Theta):= \field[\Delta]/\Theta\field[\Delta]\]
has Krull dimension zero (i.e., it is a finite-dimensional $\field$-space). Since $\field$ is infinite,  by the Noether normalization lemma, an l.s.o.p.~always exists: a generic choice of $\theta_1,\ldots,\theta_d$ does the job.
The ring $\field(\Delta,\Theta)$ is called an {\em Artinian reduction} of $\field[\Delta]$. 

We need a few more definitions. If $M$ is a finitely-generated graded $A$-module,  we let $M_j$ denote the $j$-th homogeneous component of $M$. For $\tau\in A$, define $(0:_M \tau):=\{\nu\in M \mid \tau\nu=0\}$. The {\em socle} of $M$ is the following graded submodule of $M$:
\[\Soc M=\bigcap_{v\in V}(0:_M x_v)=\{\nu\in M \mid \mideal\nu=0\}.\]
In particular, for any choice of integers $i_1<i_2<\cdots<i_\ell$, $\bigoplus_{j=1}^\ell (\Soc M)_{i_j}$ is a submodule of $M$.
For a standard graded $\field$-algebra $M=A/I$ of Krull dimension zero, this allows us to define the {\em interior socle} of $M$: 
\[\Soc^\circ M:=\bigoplus_{i=0}^{d_0-1}(\Soc M)_i, \quad \mbox{where $d_0:=\max\{j \mid M_j\neq 0\}$}.\]
We say that $A/I$ is a {\em level algebra} if $\Soc^\circ(A/I)=0$, and that $A/I$ is {\em Gorenstein} if it has a $1$-dimensional socle. Equivalently, $A/I$ is Gorenstein if it is level and $\dim_\field (A/I)_{d_0}=1$.

We are interested in the Hilbert functions of $\field(\Delta,\Theta)$ and its quotient \[\overline{\field(\Delta,\Theta)}:= \field(\Delta,\Theta)/\Soc^\circ \field(\Delta,\Theta).\] 
\begin{definition}  \label{def-h'-h''}
Let $\Delta$ be a $(d-1)$-dimensional simplicial complex and let $\Theta=\theta_1,\ldots,\theta_d$ be a {\em generic} l.s.o.p.~for $\field[\Delta]$. The {\em $h'$-} and the {\em $h''$-numbers} of $\Delta$ are defined by
\[
h'_j(\Delta):=\dim_\field \field(\Delta,\Theta)_j \quad \mbox{and} \quad h''_j(\Delta):=\dim_\field \overline{\field(\Delta,\Theta)}_j \quad \mbox{(for  $j\geq 0$), respectively.}
\]
\end{definition} 

Although $\field$ is suppressed from our notation, the $h'$- and $h''$-numbers do depend on $\field$. For any $(d-1)$-dimensional simplicial complex $\Delta$, $h'_j(\Delta)=0$ for all $j>d$ (see \cite[Proposition III.2.4(b)]{Stanley-96}), while $h'_d(\Delta)=h''_d(\Delta)=\tilde{\beta}_{d-1}(\Delta)$ (see \cite[Theorem 4.1]{TayWhiteley} and \cite[Lemma 2.2(3)]{Babson-Novik}). The following theorem collects several other known results on $h'$- and $h''$-numbers.

\begin{theorem} \label{known-h'-h''} Let $\Delta$ be a $(d-1)$-dimensional simplicial complex.
\begin{enumerate}
\item If $\Delta$ is a $\field$-homology sphere or a ball, then $h'_i(\Delta)=h_i(\Delta)$ for all $0\leq i\leq d$.
\item If $\Delta$ is a $\field$-homology manifold with or without boundary, then 
\[h'_i(\Delta)=h_i(\Delta)-\binom{d}{i}\sum_{j=1}^{i-1}(-1)^{i-j}\tilde{\beta}_{j-1}(\Delta) \qquad \forall ~0\leq i\leq d.\]
\item If $\Delta$ is a connected, orientable $\field$-homology manifold without boundary, then 
\[h''_i(\Delta)=h'_i(\Delta)-\binom{d}{i}\tilde{\beta}_{i-1}(\Delta)=
 h_i(\Delta)-\binom{d}{i}\sum_{j=1}^{i}(-1)^{i-j}\tilde{\beta}_{j-1}(\Delta) \qquad \forall ~ 0\leq i\leq d-1.\]
\item If $\Delta$ is a complex with (at most) one singular vertex $u$, then for all $0\leq i\leq d$,
\[h'_i(\Delta)=h_i(\Delta)-\sum_{j=1}^{i-1}(-1)^{i-j}\left(\binom{d-1}{i-1}\tilde{\beta}_{j-1}(\Delta)+\binom{d-1}{i}\tilde{\beta}_{j-1}(\cost_{\Delta}u)\right).\]
\end{enumerate}
\end{theorem}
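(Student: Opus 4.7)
All four parts fit into a single philosophy: the deviation of $\Hilb(\field(\Delta,\Theta);\lambda)$ from $(1-\lambda)^d\Hilb(\field[\Delta];\lambda)=\sum_i h_i(\Delta)\lambda^i$ is controlled by the local cohomology modules $H^j_\mideal(\field[\Delta])$, whose multigraded pieces Hochster's formula (and its module-theoretic refinement by Gr\"abe \cite{Grabe-84}) computes in terms of the reduced simplicial homology of links of faces. Since these are all known results, the plan is principally to cite the appropriate sources.

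Part (1) is immediate from Reisner's criterion: $\field$-homology spheres and balls have Cohen--Macaulay face rings, so a generic $\Theta$ is a regular sequence, forcing $\Hilb(\field(\Delta,\Theta);\lambda)=(1-\lambda)^d\Hilb(\field[\Delta];\lambda)$ and hence $h'_i=h_i$. Part (2) is Schenzel's Hilbert-series formula for Buchsbaum rings together with Hochster's identification $\dim_\field H^j_\mideal(\field[\Delta])_0=\tilde{\beta}_{j-1}(\Delta)$; homology manifolds are Buchsbaum by Reisner's criterion applied to proper links.

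Part (3) uses connectedness and orientability to activate Poincar\'e--Lefschetz duality on $\field(\Delta,\Theta)$. I would cite the Novik--Swartz socle computation \cite{Novik-Swartz-09:Socles}, which shows that for a connected orientable closed $\field$-homology manifold, $\dim_\field(\Soc\field(\Delta,\Theta))_i=\binom{d}{i}\tilde{\beta}_{i-1}(\Delta)$ for every $0\leq i\leq d-1$. These classes lie in $\Soc^\circ\field(\Delta,\Theta)$, so subtracting from $h'_i$ gives the first equality in (3), and substituting the formula from (2) gives the second.

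Part (4) is the most delicate. For a complex $\Delta$ with exactly one singular vertex $u$, one first checks (by picking a non-singular vertex inside any face and iterating links) that the link of every non-empty face other than $\{u\}$ is a $\field$-homology sphere or ball. Hochster's formula then shows that, for every $j<d$, the module $H^j_\mideal(\field[\Delta])$ has non-trivial multigraded pieces only at the trivial multigrade (contributing $\tilde{\beta}_{j-1}(\Delta)$) and at multigrades with support $\{u\}$ (contributing $\tilde{\beta}_{j-2}(\lk_\Delta u)$); the contributions from sphere links of higher faces are concentrated in cohomological degree $d$ and so do not affect $h'_i$. I would then run a Schenzel-type expansion in this non-Buchsbaum setting, with the two multigraded sources producing correction terms whose binomial factors are $\binom{d-1}{i-1}$ and $\binom{d-1}{i}$, reflecting whether the variable $x_u$ participates in the multigraded support or not. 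Finally, the Mayer--Vietoris sequence of the decomposition $\Delta=\st_\Delta u\cup\cost_\Delta u$ (with $\st_\Delta u$ contractible and intersection $\lk_\Delta u$) rewrites the alternating sum of $\tilde{\beta}_{j-2}(\lk_\Delta u)$ in terms of alternating sums of $\tilde{\beta}_{j-1}(\Delta)$ and $\tilde{\beta}_{j-1}(\cost_\Delta u)$, yielding the stated formula. The principal obstacle will be precisely this bookkeeping: verifying that the signed combinatorics of the Schenzel-type expansion combined with the Mayer--Vietoris substitution collapse cleanly onto $\binom{d-1}{i-1}\tilde{\beta}_{j-1}(\Delta)+\binom{d-1}{i}\tilde{\beta}_{j-1}(\cost_\Delta u)$ rather than some messier combination of Betti numbers.
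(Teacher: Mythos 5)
Your proposal is correct and takes essentially the same route as the paper: the theorem is a compilation of known results, and the paper's proof consists precisely of the citations you identify (Stanley/Reisner for (1), Schenzel for (2), Novik--Swartz for (3), and the singular-vertex Hilbert function computation of \cite[Theorem 4.7]{Novik-Swartz-12}, whose method is the Gr\"abe-based, Schenzel-type argument you sketch, for (4)). The only quibble is attribution in (3): the exact socle dimension $\binom{d}{i}\tilde{\beta}_{i-1}(\Delta)$, rather than just the lower bound, comes from the Gorenstein property established in \cite{Novik-Swartz-09:Gorenstein}, which is the source the paper cites.
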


\noindent Part 1 of this theorem is due to Stanley \cite{Stanley-75}, part 2 is due to Schenzel \cite{Schenzel-81}, part 3 is \cite[Theorem 1.3]{Novik-Swartz-09:Gorenstein}, and part 4 is a special case of \cite[Theorem 4.7]{Novik-Swartz-12}. When $\Delta$ is a $\field$-homology manifold with boundary, part 4 allows us to compute the $h'$-numbers of $\complet{\Delta}$.  One of the goals of this paper is to understand the $h''$-numbers of $\complet{\Delta}$ where we in addition assume that $\Delta$ is connected and orientable. This requires some results on the local cohomology of $\field[\complet{\Delta}]$ that we review in the next subsection.

\subsection{Local cohomology and Gr\"abe's theorem}
Let $M$ be an arbitrary finitely-generated graded $A$-module. We denote by $H^i_\mideal(M)$ the $i$-th local cohomology of $M$ with respect to $\mideal$. 

For a simplicial complex $\Delta$, Gr\"abe \cite{Grabe-84} gave a description of $H^i_\mideal(\field[\Delta])$ and its $A$-module structure in terms of simplicial cohomology of the links of $\Delta$ and the maps between them. When $\Delta$ is a complex with one singular vertex $u$, this description takes the following simple form. For $F\in\Delta$, consider the $i$-th simplicial cohomology of the pair $(\Delta, \cost_\Delta F)$ with coefficients in $\field$:
\[
H^i_F:=\tilde{H}^i(\Delta, \cost_\Delta F; \field)\cong \tilde{H}^{i-|F|}(\lk_\Delta F; \field). \]
In particular, $H^i_\emptyset=\tilde{H}^i(\Delta, \emptyset;\field)=\tilde{H}^i(\Delta; \field)$. If $G\subseteq F\in\Delta$, we let $\iota^\ast: H^i_F(\Delta)\to H^i_G(\Delta)$ be the map induced by inclusion $\iota: \cost_\Delta G \to \cost_\Delta F$.

\begin{theorem} \label{local-cohom-1-singul}
{\rm[Gr\"abe]} Let $\Delta$ be a $(d-1)$-dimensional simplicial complex with one singular vertex $u$, and let $-1\leq i <d-1$. Then
\[
H^{i+1}_\mideal(\field[\Delta])_{-j}=\left\{\begin{array}{ll}
0 & \mbox{ if $j<0$},\\
H^i_\emptyset(\Delta) & \mbox{ if $j=0$},\\
H^i_{\{u\}}(\Delta) & \mbox{ if $j>0$}.
\end{array}
\right.
\]
For every vertex $w\neq u$ and any integer $j$, the map $\cdot x_w: H^{i+1}_\mideal(\field[\Delta])_{-(j+1)} \to H^{i+1}_\mideal(\field[\Delta])_{-j}$ is the zero map; on the other hand, 
\[\cdot x_u= \left\{\begin{array}{ll}
\mbox{$0$-map} & \mbox{ if $j<0$},\\
\iota^*: H^i_{\{u\}}(\Delta) \to H^i_\emptyset(\Delta) & \mbox{ if $j=0$},\\
\mbox{identity map: $H^i_{\{u\}}(\Delta) \to H^i_{\{u\}}(\Delta)$} &  \mbox{ if $j>0$}.
\end{array}
\right.
\]
\end{theorem}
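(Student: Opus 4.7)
The plan is to invoke Gr\"abe's general theorem on the multigraded $A$-module structure of $H^\ast_\mideal(\field[\Delta])$ and then use the one-singular-vertex hypothesis to kill all but the announced summands.

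Gr\"abe's theorem in its general form states that for any simplicial complex $\Delta$ the multigraded piece $H^{i+1}_\mideal(\field[\Delta])^{(\beta)}$ vanishes unless $\beta = -\alpha$ for some $\alpha\in\N^V$ whose support $F=\mathrm{supp}(\alpha)$ is a face of $\Delta$, in which case
\[ H^{i+1}_\mideal(\field[\Delta])^{(-\alpha)}\;\cong\;\tilde H^{i-|F|}(\lk_\Delta F;\field)\;=\;H^i_F(\Delta). \]
The $A$-action is described as follows: for a vertex $v$, multiplication by $x_v$ sends the piece at multidegree $-\alpha$ into the piece at multidegree $-\alpha+e_v$, and this map is zero when $\alpha_v=0$ (the target falls outside the support cone); it is the identity on $H^i_F$ when $\alpha_v\ge 2$ (so the support does not change); and it is the cohomology restriction map $\iota^\ast\colon H^i_F\to H^i_{F\setminus\{v\}}$ when $\alpha_v=1$.

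With Gr\"abe's description in hand, the proof has two steps. First, I would show that $H^i_F(\Delta)=0$ for every nonempty face $F\ne\{u\}$ under the hypothesis $i<d-1$: such an $F$ contains a non-singular vertex $w$, and $\lk_\Delta F=\lk_{\lk_\Delta w}(F\setminus\{w\})$ is a proper link inside the $\field$-homology ball or sphere $\lk_\Delta w$, hence is itself a $\field$-homology ball or sphere of dimension $d-1-|F|$, whose reduced cohomology is concentrated in the single degree $d-1-|F|$; since $i-|F|<d-1-|F|$ the relevant cohomology vanishes. Second, I would pass from the multigraded to the singly graded structure: the number of $\alpha\in\N^V$ with a prescribed support $F$ of size $k\ge 1$ and $|\alpha|=j$ equals $\binom{j-1}{k-1}$, and $F=\emptyset$ contributes only at $j=0$. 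Combined with the vanishing, this leaves exactly one copy of $H^i_\emptyset$ at $j=0$, exactly one copy of $H^i_{\{u\}}$ at each $j\ge 1$, and nothing at $j<0$.

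Finally, the $A$-module claims follow by reading off Gr\"abe's recipe on the surviving pieces. A surviving piece at degree $-(j+1)$ with $j\ge 0$ sits in multidegree $-(j+1)e_u$. Multiplication by $x_v$ with $v\ne u$ lands in a multidegree with positive $v$-coordinate, outside the support cone, so this map is zero. Multiplication by $x_u$ lands in multidegree $-je_u$: for $j\ge 1$ the support is still $\{u\}$ and the map is the identity on $H^i_{\{u\}}$; for $j=0$ the support drops to $\emptyset$ and the map is $\iota^\ast\colon H^i_{\{u\}}\to H^i_\emptyset$. The remaining cases ($j<0$) are zero because the target already vanishes. The main obstacle I anticipate is unpacking Gr\"abe's theorem in exactly the above form, since it is usually stated through a \v Cech-type model and one must verify that the multigraded pieces depend only on the support of $\alpha$ and that the $\cdot x_v$ maps take the form described; once that general machinery is in place, the one-singular-vertex reduction and the combinatorial bookkeeping are routine.
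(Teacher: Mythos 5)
Your proposal is correct and is essentially the paper's own treatment: the paper offers no independent proof, but cites Gr\"abe's general multigraded description of $H^\ast_\mideal(\field[\Delta])$ (quoted in the paper for the top module in Theorem \ref{Grabe-d}) and presents Theorem \ref{local-cohom-1-singul} as its specialization to a complex with one singular vertex. Your vanishing argument for supports other than $\emptyset$ and $\{u\}$ (using that links of faces containing a non-singular vertex are homology balls or spheres of dimension $d-1-|F|$, whose reduced cohomology vanishes below that degree) together with the multidegree bookkeeping and the reading-off of the $\cdot x_v$ maps is exactly the intended derivation.
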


The description of $H^d_\mideal(\field[\Delta])$ is quite a bit more involved. To this end, for a monomial $\rho\in A$, define the {\em support} of $\rho$ by $s(\rho):=\{v\in V \mid x_v \mbox{ divides }\rho\}$. Let $\M(\Delta)$ be the set of all monomials in $A$ whose support is in $\Delta$, and let $\M_j(\Delta):=\{\rho\in\M(\Delta) \mid \deg(\rho)=j\}$.

\begin{theorem} \label{Grabe-d}
{\rm[Gr\"abe]} Let $\Delta$ be any $(d-1)$-dimensional simplicial complex. Then for $j\in\ZZ$, 
\[ H^d_{\mideal}(\field[\Delta])_{-j}=\bigoplus_{\rho\in\M_j(\Delta)} \HH_{\rho}, \quad \mbox{where} \quad \HH_{\rho}=H^{d-1}_{s(\rho)}(\Delta),\]
and the $A$-structure on the $\rho$-th component of the right-hand side is given by
\[
\cdot x_\ell = \left\{\begin{array}{ll} 
\mbox{$0$-map} & \mbox{if $\ell\notin s(\rho)$},\\
\iota^*: H^{d-1}_{s(\rho)}(\Delta) \to H^{d-1}_{s(\rho/x_\ell)}(\Delta) & \mbox{if $\ell\in s(\rho)$, but $\ell\notin s(\rho/x_\ell)$},\\
\mbox{identity map: $H^{d-1}_{s(\rho)}(\Delta) \to H^{d-1}_{s(\rho/x_\ell)}(\Delta)$} & \mbox{if $\ell\in s(\rho)$, and $\ell\in s(\rho/x_\ell)$}.
\end{array}
\right.
\]
\end{theorem}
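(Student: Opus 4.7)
The plan is to deduce this formula from Hochster's description of the $\Z^V$-multigraded local cohomology of $\field[\Delta]$, together with an explicit bookkeeping via the \v{C}ech complex.

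First, I would realize $H^\bullet_{\mideal}(\field[\Delta])$ as the cohomology of the \v{C}ech complex $\check{C}^\bullet$ built from the variables $x_v$, $v\in V$. This is a complex of $\Z^V$-multigraded $A$-modules, and since $x_F\in I_\Delta$ for every nonface $F$, only localizations at faces of $\Delta$ contribute. Restricting to a single multidegree $\mathbf{a}\in\Z^V$, one checks that this complex identifies (up to a standard degree shift) with an augmented simplicial cochain complex of the link of the negative-support face $F_\mathbf{a}:=\{v:a_v<0\}$. This yields Hochster's formula
\[
H^i_{\mideal}(\field[\Delta])_{\mathbf{a}} \cong \tilde{H}^{i-|F_\mathbf{a}|-1}\bigl(\lk_\Delta F_\mathbf{a};\field\bigr)
\]
whenever every $a_v\leq 0$ and $F_\mathbf{a}\in\Delta$, and vanishes otherwise (in particular, if any $a_v>0$).

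Next, I specialize to $i=d$ and repackage the multigrading. A multidegree $\mathbf{a}$ with all $a_v\leq 0$ and $F_\mathbf{a}\in\Delta$ corresponds bijectively to a monomial $\rho=\prod_v x_v^{-a_v}\in\M(\Delta)$ with $s(\rho)=F_\mathbf{a}$; the total degree is $-j$ exactly when $\rho\in\M_j(\Delta)$. Under the canonical isomorphism
\[
\tilde{H}^{d-|s(\rho)|-1}\bigl(\lk_\Delta s(\rho);\field\bigr)\cong \tilde{H}^{d-1}\bigl(\Delta,\cost_\Delta s(\rho);\field\bigr)=H^{d-1}_{s(\rho)}(\Delta)=\HH_\rho,
\]
summing over all admissible $\mathbf{a}$ produces the stated direct-sum decomposition of $H^d_{\mideal}(\field[\Delta])_{-j}$.

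Finally, the $A$-module structure is read off directly from the \v{C}ech complex, where multiplication by $x_\ell$ shifts a multidegree $\mathbf{a}$ to $\mathbf{a}+e_\ell$. If $\ell\notin s(\rho)$, the shifted multidegree has a positive entry at $\ell$, so the target vanishes by Hochster. If $x_\ell\mid\rho$ but $x_\ell^2\nmid\rho$, then $s(\rho/x_\ell)=s(\rho)\setminus\{\ell\}\subsetneq s(\rho)$, and the induced map factors through the inclusion $\cost_\Delta s(\rho)\hookrightarrow\cost_\Delta(s(\rho)\setminus\{\ell\})$, giving precisely the map $\iota^*$. If $x_\ell^2\mid\rho$, then $s(\rho/x_\ell)=s(\rho)$ and the \v{C}ech-level multiplication is literally the identity between two isomorphic copies of the same cohomology group. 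The main obstacle I anticipate is aligning the sign and degree-shift conventions in the \v{C}ech-to-link-cochain identification with the natural orientations used to define $\iota^*$; once this bookkeeping is in place, the three-case description of the module structure corresponds exactly to the three ways $\ell$ can interact with $s(\rho)$.
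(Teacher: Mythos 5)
The paper never proves this statement: it is quoted as background, with attribution and citation to Gr\"abe \cite{Grabe-84}, so there is no in-paper argument to compare yours against. Your route --- computing $H^\bullet_\mideal(\field[\Delta])$ from the \v{C}ech complex on the variables $x_v$, identifying each multigraded piece with reduced cohomology of a link (Hochster's formula), re-indexing the admissible multidegrees $\mathbf{a}\le 0$ whose negative support is a face by monomials $\rho\in\M_j(\Delta)$, and reading off multiplication by $x_\ell$ as the multidegree shift $\mathbf{a}\mapsto\mathbf{a}+e_\ell$ --- is the standard proof of this result and is in essence Gr\"abe's own argument, so it is a sound plan; note, though, that the part you defer as ``bookkeeping'' is precisely the content that goes beyond Hochster's formula (which only gives the graded vector space): one must verify that the \v{C}ech-level maps between multidegrees, transported through the identification $H^{d-1}_{F}(\Delta)=\tilde H^{d-1}(\Delta,\cost_\Delta F)\cong\tilde H^{d-1-|F|}(\lk_\Delta F)$, really are the maps induced by inclusions of contrastars, and this check (manageable for $i=d$, where one works with a cokernel at the top of the \v{C}ech complex) is the heart of the theorem. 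One small correction to your case $\ell\in s(\rho)$, $\ell\notin s(\rho/x_\ell)$: since $s(\rho/x_\ell)\subseteq s(\rho)$, the inclusion of subcomplexes goes $\cost_\Delta s(\rho/x_\ell)\hookrightarrow \cost_\Delta s(\rho)$, not the direction you wrote; the induced map of pairs $\big(\Delta,\cost_\Delta s(\rho/x_\ell)\big)\to\big(\Delta,\cost_\Delta s(\rho)\big)$ then gives on cohomology exactly $\iota^\ast\colon H^{d-1}_{s(\rho)}(\Delta)\to H^{d-1}_{s(\rho/x_\ell)}(\Delta)$, which is the map asserted in the statement, so your conclusion is right even though the stated inclusion is reversed.
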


\section{The Gorenstein and Weak Lefschetz properties} \label{sec:Gorenstein}
If $\Delta$ is a $\field$-homology sphere and $\Theta$ is an arbitrary l.s.o.p.~for $\field[\Delta]$, then $\field(\Delta,\Theta)$ is Gorenstein (see \cite[Theorem II.5.1]{Stanley-96}). This result was extended in \cite[Theorem 1.4]{Novik-Swartz-09:Gorenstein} to connected, orientable $\field$-homology manifolds without boundary: if $\Delta$ is such a complex and $\Theta$ is an l.s.o.p.~for $\field[\Delta]$, then $\overline{\field(\Delta,\Theta)}$ is Gorenstein. Here we further extend this result to manifolds with boundary. 

Throughout this section, we let $\Delta$ be a $\field$-homology manifold with boundary. We assume that $\Delta$ is $(d-1)$-dimensional and has vertex set $V$, and so $\complet{\Delta}$ has vertex set $V_0:=V\cup\{v_0\}$. The main result of this section is:

\begin{theorem} \label{thm-Gorenstein}
Let $\Delta$ be a connected, orientable $\field$-homology manifold with boundary and let $\Theta$ be a generic l.s.o.p.~for $\field[\complet{\Delta}]$. Then $\overline{\field(\complet{\Delta},\Theta)}$ is Gorenstein.
\end{theorem}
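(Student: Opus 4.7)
The plan is to adapt the approach of \cite[Theorem 1.4]{Novik-Swartz-09:Gorenstein} (for closed orientable homology manifolds) to the singular complex $\complet\Delta$. Set $R:=\field(\complet\Delta,\Theta)$ and $\overline R:=R/\Soc^\circ R$; since $\lk_{\complet\Delta}v_0=\partial\Delta$ and $\cost_{\complet\Delta}v_0=\Delta$, the complex $\complet\Delta$ has $v_0$ as its only (potential) singular vertex, and Gr\"abe's Theorem \ref{local-cohom-1-singul} applies. The key opening observation is that $\complet\Delta$ is homotopy equivalent to $\Delta/\partial\Delta$; therefore, by Lefschetz duality for the connected orientable $\field$-homology manifold $\Delta$, we have $\tilde H_{d-1}(\complet\Delta;\field)\cong H_{d-1}(\Delta,\partial\Delta;\field)\cong \field$, so $\dim_\field \overline R_d = h''_d(\complet\Delta) = \tilde\beta_{d-1}(\complet\Delta) = 1$. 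Gorensteinness of $\overline R$ then reduces to showing that $\Soc\overline R$ is concentrated in the top degree $d$.

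Next, I would apply Theorem \ref{local-cohom-1-singul} to pin down the $A$-module structure of $H^i_\mideal(\field[\complet\Delta])$ for $1\le i<d$: in internal degree $0$ this module is $\tilde H^{i-1}(\complet\Delta)\cong H_{d-i}(\Delta)$, in each strictly negative degree it is $\tilde H^{i-2}(\partial\Delta)$, and every $x_v$ acts as zero except $x_{v_0}$, whose action is the inclusion-induced map $\iota^*$ in degree $0$ and the identity between negative degrees. This explicit picture, combined with the standard dictionary between socle elements of an Artinian reduction by a generic l.s.o.p.\ and graded pieces of local cohomology (as in \cite{Novik-Swartz-09:Socles, Novik-Swartz-12}), lets us identify the interior socle of $R$ degree by degree. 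Using Lefschetz duality for the pair $(\Delta,\partial\Delta)$ together with Poincar\'e duality for the closed orientable manifold $\partial\Delta$, this identification should yield a non-degenerate pairing $\overline R_j\otimes\overline R_{d-j}\to \overline R_d\cong\field$ induced by the fundamental class of $\Delta$, which forces $\Soc\overline R$ to live entirely in degree $d$.

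The main obstacle will be controlling how the generic $\Theta$ interacts with the local cohomology contributed by the singular vertex. Unlike the closed-manifold case, $H^i_\mideal(\field[\complet\Delta])$ is non-trivial in every nonpositive internal degree, with an infinite $x_{v_0}$-tower of identical copies of $\tilde H^{i-2}(\partial\Delta)$ in negative degrees. One must verify that a generic choice of $\Theta$ carries this tower into the interior socle in exactly the right dimensions, so that any element of $R$ in degree $j<d$ whose image in $\overline R$ is annihilated by $\mideal$ must already have been in $\Soc^\circ R$ to begin with. This calls for a careful Koszul-complex (or long-exact-sequence) argument on the local cohomology modulo $\Theta$, exploiting that the identity action of $x_{v_0}$ in negative degrees, after applying a generic linear form, produces non-degenerate multiplications; this is the technical step that extends the closed-manifold theorem to manifolds with boundary.
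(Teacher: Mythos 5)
There is a genuine gap: the step you yourself flag as ``the main obstacle'' is precisely the content of the theorem, and your sketch does not carry it out. Two specific problems. First, the ``standard dictionary'' between socle elements of a generic Artinian reduction and graded pieces of local cohomology that you invoke from \cite{Novik-Swartz-09:Socles, Novik-Swartz-12} requires Buchsbaumness, and $\field[\complet{\Delta}]$ is \emph{not} Buchsbaum: by Gr\"abe's theorem the modules $H^i_\mideal(\field[\complet{\Delta}])$ carry an infinite $x_{v_0}$-tower of copies of $\tilde H^{i-2}(\partial\Delta)$ in negative degrees. The paper only gets a socle identification after first passing to $\field[\complet{\Delta}]/\theta_1\field[\complet{\Delta}]$ (which \emph{is} Buchsbaum by \cite{Novik-Swartz-12}), and even then needs a separate argument on $H^d_\mideal$ via Theorem \ref{Grabe-d} (Proposition \ref{SB} and Lemma \ref{vanishing-socle}) to kill the negative-degree contribution; moreover the full computation of $\dim_\field(\Soc\field(\complet{\Delta},\Theta))_i$ in Theorem \ref{soc-and-h''} uses the symmetry $h''_i=h''_{d-i}$, i.e.\ it \emph{relies on} Theorem \ref{thm-Gorenstein}, so building the Gorenstein proof on that socle computation risks circularity. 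Second, even granting the socle dimensions, a count does not produce the asserted ``non-degenerate pairing induced by the fundamental class'': non-degeneracy of multiplication $\overline R_j\otimes\overline R_{d-j}\to\overline R_d$ is exactly equivalent to the statement being proved, and no construction or Koszul/long-exact-sequence argument is actually given -- it is only promised.

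The paper's mechanism is different and avoids all of this. It proves (Lemma \ref{analog-of-Prop4.24}, extending \cite[Proposition 4.24]{Swartz-09}) that for every vertex $v\neq v_0$ the map $z\mapsto x_v\cdot z$ is a degree-one isomorphism $\field(\lk_{\complet{\Delta}}v,\Theta^v)\cong (x_v)\field(\complet{\Delta},\Theta)$, via a Hilbert-function comparison that uses Theorem \ref{known-h'-h''}(4), a Mayer--Vietoris lemma (Lemma \ref{lem:homol-of-complex-and-costar}), and the exact sequence \eqref{exact-x_v}. Then, if $y\in\field(\complet{\Delta},\Theta)_j$ with $j\le d-2$ has all $x_v\cdot y$ in the socle, Gorensteinness of the \emph{sphere} links $\lk_{\complet{\Delta}}v$ forces $x_v\cdot y=0$ for all $v\neq v_0$; the singular vertex, for which no such link isomorphism exists, is handled by the elementary twist $\theta_1=x_{v_0}+\sum_{v\neq v_0}x_v$ and $\theta_1\cdot y=0$, which forces $x_{v_0}\cdot y=0$ as well. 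Your correct observations (that $h''_d(\complet{\Delta})=\tilde\beta_{d-1}(\complet{\Delta})=1$ and the Gr\"abe description of the $x_{v_0}$-action) are the easy part; to complete a proof you would either need to supply the deferred Koszul-type argument in full, or adopt the vertex-link route above.
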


The proof relies on a few lemmas. For these lemmas we fix a vertex $v$ of $\Delta$. (Hence $v$ is a non-singular vertex of $\complet{\Delta}$.)

\begin{lemma} \label{lem:homol-of-complex-and-costar}
Let $\Delta$ be a $(d-1)$-dimensional, $\field$-homology manifold with boundary, and let $v$ be a vertex of $\Delta$. Then for all $j\leq d-3$,
\[
\tilde{\beta}_j(\complet{\Delta})=\tilde{\beta}_j(\cost_{\complet{\Delta}}v) \quad \mbox{and} \quad \tilde{\beta}_j(\Delta)=\tilde{\beta}_j(\cost_{\Delta}v).
\]
\end{lemma}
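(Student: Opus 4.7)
The plan is to use the long exact sequence of the pair $(X,\cost_X v)$ for $X=\complet{\Delta}$ and $X=\Delta$, together with the standard excision-style identification $\tilde{H}_\ast(X,\cost_X v)\cong \tilde{H}_{\ast-1}(\lk_X v)$ (since $\st_X v$ is a cone with apex $v$, hence contractible, and $\st_X v\cap \cost_X v=\lk_X v$).

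First, I would verify the shape of $\lk_{\complet{\Delta}}v$ when $v\neq v_0$. A direct computation from the definition of $\complet{\Delta}$ gives
\[
\lk_{\complet{\Delta}}v \;=\; \lk_\Delta v \;\cup\; \bigl(v_0\ast \lk_{\partial\Delta}v\bigr).
\]
If $v$ is an interior vertex of $\Delta$, then $\lk_{\partial\Delta}v=\emptyset$ and $\lk_{\complet{\Delta}}v=\lk_\Delta v$ is a $\field$-homology $(d-2)$-sphere. If $v$ is a boundary vertex, then $\lk_\Delta v$ is a $(d-2)$-ball with boundary $\lk_{\partial\Delta}v$, and coning that boundary off with apex $v_0$ yields a $(d-2)$-sphere. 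Either way, $\lk_{\complet{\Delta}}v$ is a $\field$-homology $(d-2)$-sphere, so $\tilde{\beta}_i(\lk_{\complet{\Delta}}v)=0$ for all $i<d-2$.

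Next, I would write out the relevant piece of the long exact sequence of $(\complet{\Delta},\cost_{\complet{\Delta}}v)$,
\[
\tilde{H}_j(\lk_{\complet{\Delta}}v)\to \tilde{H}_j(\cost_{\complet{\Delta}}v)\to \tilde{H}_j(\complet{\Delta})\to \tilde{H}_{j-1}(\lk_{\complet{\Delta}}v).
\]
For $j\leq d-3$ we have $j,j-1\leq d-3<d-2$, so both outside terms vanish by the previous paragraph, giving $\tilde{\beta}_j(\cost_{\complet{\Delta}}v)=\tilde{\beta}_j(\complet{\Delta})$, as required.

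The second equality is handled in exactly the same way, with $\complet{\Delta}$ replaced by $\Delta$. Here $\lk_\Delta v$ is either a $\field$-homology $(d-2)$-sphere or a $\field$-homology $(d-2)$-ball. In the ball case all reduced homology vanishes and the conclusion is immediate; in the sphere case only $\tilde{H}_{d-2}$ is nonzero, which again lies above the range $j\leq d-3$ and $j-1\leq d-4$. I do not expect any real obstacle — the only thing requiring care is the identification of $\lk_{\complet{\Delta}}v$ and the verification that it is always a sphere (not a ball), which is what makes the argument work uniformly and matches the range $j\leq d-3$ rather than something smaller.
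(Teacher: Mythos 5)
Your proof is correct and follows essentially the same route as the paper: the paper uses the Mayer--Vietoris sequence for the cover of $\Delta$ (resp.\ $\complet{\Delta}$) by $\st v$ and $\cost v$, which is interchangeable with your long exact sequence of the pair $(X,\cost_X v)$ plus excision, and both hinge on the same facts that $\lk_{\complet{\Delta}}v$ is a $\field$-homology $(d-2)$-sphere, $\lk_\Delta v$ is a sphere or ball, and the stars are acyclic. No gaps; your explicit identification $\lk_{\complet{\Delta}}v=\lk_\Delta v\cup(v_0\ast\lk_{\partial\Delta}v)$ just spells out what the paper asserts without comment.
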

\begin{proof}
The proof is a simple application of the Mayer-Vietoris argument. Indeed, since $v\neq v_0$, the link $\complet{L}:=\lk_{\complet{\Delta}}v$ is a $\field$-homology $(d-2)$-sphere, while the link $L:=\lk_\Delta v$ is a $\field$-homology $(d-2)$-sphere or a $(d-2)$-ball. In either case, $\tilde{\beta}_j(\complet{L})=\tilde{\beta}_j(L)=0$ for all $j\leq d-3$. Since, the stars $\st_\Delta v$ and $\st_{\complet{\Delta}}v$ are acyclic, considering the following portion 
\[
\tilde{H}_j(L)\to\tilde{H}_j(\cost_\Delta v)\oplus\tilde{H}_j(\st_\Delta v)\to\tilde{H}_j(\Delta)\to\tilde{H}_{j-1}(L)
\]
of the Mayer-Vietoris sequence for $\Delta$ and its analog for $\complet{\Delta}$ yields the result.
\end{proof}

 The following lemma is a generalization of \cite[Proposition 4.24]{Swartz-09}. We set $A:=\field[x_u \mid u\in V_0]$ and $A^v:=\field[x_u \mid u\in V_0\setminus\{v\}]$.
Observe that $\field[\complet{\Delta}]$ and $\field[\cost_{\complet{\Delta}}v]$ have natural $A$-module structures (where multiplication by $x_v$ on $\field[\cost_{\complet{\Delta}}v]$  is the zero map), while $\field[\lk_{\complet{\Delta}}v]$ has a natural $A^v$-module structure (if $u\neq v$ is not in the link of $v$, then multiplication by $x_u$ is the zero map). Let $\Theta=\theta_1,\ldots,\theta_d\in A$ be a generic l.s.o.p.~for  $\field[\complet{\Delta}]$, and hence also for $\field[\cost_{\complet{\Delta}}v]$. Since $\Theta$ is generic, $\theta_1$ has non-vanishing coefficients. So by scaling the variables if necessary, we can work in an isomorphic setting and assume w.l.o.g.~that {\em all} coefficients of $\theta_1$ are equal to $1$.
Let $\theta'_1:=\theta_1-x_v$, and for $j>1$, let $\theta'_j=\theta_j-c_j\theta_1$ where $c_j$ is the coefficient of $x_v$ in $\theta_j$. Then $\theta'_1, \theta'_2,\ldots,\theta'_d$ can be viewed as elements of $A^v$, with $\Theta^v=\{\theta'_2,\ldots,\theta'_d\}\subset A^v$ forming an l.s.o.p.~for $\field[\lk_{\complet{\Delta}}v]$. Furthermore, the ring $\field(\lk_{\complet{\Delta}}v, \Theta^v)$  inherits an $A^v$-module structure, and defining
\[
x_v\cdot y:=-\theta'_1\cdot y \quad \mbox{for }y\in \field(\lk_{\complet{\Delta}}v, \Theta^v)
\]
extends it to an $A$-module structure.

\begin{lemma} \label{analog-of-Prop4.24}
 Let $\Delta$ be a $(d-1)$-dimensional, connected, orientable $\field$-homology manifold with boundary, and let $v$ be a vertex of $\Delta$. Then 
the map
\[\phi_v: \field(\lk_{\complet{\Delta}}v, \Theta^v) \to (x_v)\field(\complet{\Delta},\Theta) \quad \mbox{given by } z \to x_v\cdot z,
\]
is well-defined and is a graded isomorphism of $A$-modules (of degree $1$).
\end{lemma}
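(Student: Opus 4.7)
The plan is to reduce the claim to a Hilbert-function equality, after verifying the routine formal checks and producing surjectivity via an explicit ``elimination of $x_v$'' construction. For well-definedness and $A$-linearity, given a lift $\tilde z\in\field[\lk_{\complet{\Delta}}v]$ of $z$, I observe that any non-face $F\subseteq V_0\setminus\{v\}$ of $\lk_{\complet{\Delta}}v$ satisfies $F\cup\{v\}\notin\complet{\Delta}$, hence $x_v\cdot I_{\lk_{\complet{\Delta}}v}\subseteq I_{\complet{\Delta}}$; and since each $\theta'_j$ for $j\geq 2$ differs from $\theta_j\in\Theta$ by a multiple of $\theta_1$, the map descends cleanly to the $\Theta$-quotients. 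The only non-obvious case of $A$-linearity is multiplication by $x_v$; with the modified action $x_v\cdot y=-\theta'_1 y$ on the source and the congruence $\theta_1\equiv 0\pmod{\Theta}$ on the target, both sides compute to the same element of $\field(\complet{\Delta},\Theta)$.

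For surjectivity I will use the $\field$-vector space splitting
\[
\field[\complet{\Delta}]=\field[\cost_{\complet{\Delta}}v]\oplus\bigoplus_{k\geq 1}x_v^k\,\field[\lk_{\complet{\Delta}}v],
\]
writing a lift $\tilde w$ of $\bar w\in\field(\complet{\Delta},\Theta)$ as $\tilde w=a+\sum_{k\geq 1}x_v^k u_k$ with $a\in\field[\cost_{\complet{\Delta}}v]$ and $u_k\in\field[\lk_{\complet{\Delta}}v]$. Multiplication by $x_v$ annihilates the part of $a$ whose support does not lie in $\lk_{\complet{\Delta}}v$, so $x_v\tilde w=x_v b+\sum_{k\geq 1}x_v^{k+1}u_k$ in $\field[\complet{\Delta}]$, where $b\in\field[\lk_{\complet{\Delta}}v]$ denotes the ``link part'' of $a$. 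A short induction using $x_v\equiv -\theta'_1\pmod{\Theta}$ yields $x_v^{k+1}u_k\equiv x_v(-\theta'_1)^k u_k\pmod{\Theta}$, which rewrites $x_v\bar w$ as $\phi_v(z)$ for $z$ the image in $\field(\lk_{\complet{\Delta}}v,\Theta^v)$ of $b+\sum_{k\geq 1}(-\theta'_1)^k u_k\in\field[\lk_{\complet{\Delta}}v]$.

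For injectivity I will compare Hilbert functions. The quotient map $\field(\complet{\Delta},\Theta)\twoheadrightarrow\field[\cost_{\complet{\Delta}}v]/\Theta\field[\cost_{\complet{\Delta}}v]$ has kernel $(x_v)\field(\complet{\Delta},\Theta)$, so for generic $\Theta$ one has $\dim_{\field}(x_v)\field(\complet{\Delta},\Theta)_{j+1}=h'_{j+1}(\complet{\Delta})-h'_{j+1}(\cost_{\complet{\Delta}}v)$. Since $v$ is non-singular in $\complet{\Delta}$, the link $\lk_{\complet{\Delta}}v$ is a $(d-2)$-sphere (it equals $\lk_\Delta v$ when $v\notin\partial\Delta$, and is the union of the balls $\lk_\Delta v$ and $v_0\ast\lk_{\partial\Delta}v$ glued along their common boundary otherwise), so by Theorem~\ref{known-h'-h''}(1) the source of $\phi_v$ has dimension $h_j(\lk_{\complet{\Delta}}v)$. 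Both $\complet{\Delta}$ and $\cost_{\complet{\Delta}}v$ carry $v_0$ as their only singular vertex, so Theorem~\ref{known-h'-h''}(4), the $h$-vector identity $h_{j+1}(\complet{\Delta})=h_{j+1}(\cost_{\complet{\Delta}}v)+h_j(\lk_{\complet{\Delta}}v)$, and Lemma~\ref{lem:homol-of-complex-and-costar} (which forces all Betti-number correction terms to cancel in the range $k\leq d-2$) will give the dimension equality for $j\leq d-2$. The case $j=d-1$ reduces to $\tilde\beta_{d-1}(\complet{\Delta})-\tilde\beta_{d-1}(\cost_{\complet{\Delta}}v)=1$: orientability together with $\complet{\Delta}\simeq\Delta/\partial\Delta$ gives $\tilde\beta_{d-1}(\complet{\Delta})=\dim H_{d-1}(\Delta,\partial\Delta;\field)=1$ by Poincar\'e--Lefschetz duality, while an excision argument on $(\Delta\setminus\st v)/\partial\Delta$ shows that the top reduced homology of $\cost_{\complet{\Delta}}v$ vanishes.

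The hardest part will be the top-degree ($j=d-1$) dimension equality, where orientability enters essentially through Poincar\'e--Lefschetz duality; the lower-degree equalities are routine bookkeeping combining Theorem~\ref{known-h'-h''} with Lemma~\ref{lem:homol-of-complex-and-costar}, and once surjectivity and matching dimensions are in hand, $\phi_v$ is forced to be a graded $A$-module isomorphism of degree $1$.
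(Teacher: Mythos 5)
Your proposal is correct and follows essentially the same route as the paper: surjectivity and well-definedness via the Swartz-style ``eliminate $x_v$ using $x_v\equiv-\theta'_1$'' argument (which the paper simply cites from \cite[Proposition 4.24]{Swartz-09}), and injectivity by matching Hilbert functions through the exact sequence $0\to(x_v)\field(\complet{\Delta},\Theta)\to\field(\complet{\Delta},\Theta)\to\field(\cost_{\complet{\Delta}}v,\Theta)\to0$, Theorem \ref{known-h'-h''}(1) and (4), Lemma \ref{lem:homol-of-complex-and-costar}, the identity $h_i(\complet{\Delta})-h_i(\cost_{\complet{\Delta}}v)=h_{i-1}(\lk_{\complet{\Delta}}v)$, and orientability in the top degree. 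The only differences are presentational (you verify the cited ingredient in-line, and your Betti-number cancellation really occurs in degrees $\leq d-3$, which is all that is needed).
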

\begin{proof} The proof of \cite[Proposition 4.24]{Swartz-09} shows that $\phi_v$ is a well-defined and surjective homomorphism of $A$-modules. Thus to complete the proof, it suffices to check that for $1\leq i\leq d$, the dimensions of $\field$-spaces $\big(\field(\lk_{\complet{\Delta}}v, \Theta^v)\big)_{i-1}$ and $\big((x_v)\field(\complet{\Delta},\Theta)\big)_i$ agree.
Since $v\neq v_0$, the link $\lk_{\complet{\Delta}}v$ is a $\field$-homology sphere, and so 
\begin{equation} \label{dim-lk}
\dim_\field \big(\field(\lk_{\complet{\Delta}}v, \Theta^v)\big)_{i-1}=h_{i-1}(\lk_{\complet{\Delta}}v) \quad \mbox{for all }i\leq d.
\end{equation}

To compute $\dim_\field \big((x_v)\field(\complet{\Delta},\Theta)\big)_i$ for $i\leq d$, consider the following exact sequence, induced by the natural surjection $\field[\complet{\Delta}]\to \field[\cost_{\complet{\Delta}}v]$, 
\begin{equation} \label{exact-x_v}
0\to (x_v)\field(\complet{\Delta},\Theta) \to \field(\complet{\Delta},\Theta) \to \field(\cost_{\complet{\Delta}}v, \Theta)\to 0.
\end{equation}
If $i=d$, then, since $\Delta$ is connected and orientable, $\dim_\field \big(\field(\complet{\Delta},\Theta)\big)_d=\tilde{\beta}_{d-1}(\complet{\Delta})=1$ while $\dim_\field \field(\cost_{\complet{\Delta}}v, \Theta)_d=\tilde{\beta}_{d-1}(\cost_{\complet{\Delta}}v)=0$. Hence, in this case eq.~\eqref{exact-x_v} implies that
\[
\dim_\field \big((x_v)\field(\complet{\Delta},\Theta)\big)_d=\dim_\field \big(\field(\complet{\Delta},\Theta)\big)_d=1
=\tilde{\beta}_{d-1}(\lk_{\complet{\Delta}}v)=\dim_\field \big(\field(\lk_{\complet{\Delta}}v, \Theta^v)\big)_{d-1},
\]
as desired. 

Thus for the rest of the proof we assume that $1\leq i\leq d-1$. Since both $\complet{\Delta}$ and $\cost_{\complet{\Delta}}v$ are complexes with at most one singular vertex, namely $v_0$, and since $\cost_{\complet{\Delta}}v_0=\Delta$, we infer from Theorem \ref{known-h'-h''}(4) that
\begin{equation} \label{h'}
\dim_\field \big(\field(\complet{\Delta},\Theta)\big)_i-h_i(\complet{\Delta}) =
-\sum_{j=1}^{i-1}(-1)^{i-j}\left(\binom{d-1}{i-1}\tilde{\beta}_{j-1}(\complet{\Delta})+\binom{d-1}{i}\tilde{\beta}_{j-1}(\Delta)\right),
\end{equation}
and that a similar expression holds for $\dim_\field\big(\field(\cost_{\complet{\Delta}}v,\Theta)\big)_i-h_i(\cost_{\complet{\Delta}}v)$: to obtain it,  simply replace all occurrences of $\complet{\Delta}$ on the right-hand side of \eqref{h'} with $\cost_{\complet{\Delta}}v$ and those of $\Delta$ with $\cost_{\Delta}v$. Since according to Lemma \ref{lem:homol-of-complex-and-costar}, for $i\leq d-1$, these replacements do not affect the value of the right-hand side of \eqref{h'}, we conclude that for all $1\leq i\leq d-1$,
\begin{eqnarray*}
\dim_\field \big((x_v)\field(\complet{\Delta},\Theta)\big)_i & \stackrel{\mbox{\tiny{by \eqref{exact-x_v}}}}{=}& 
\dim_\field\big(\field(\complet{\Delta},\Theta)\big)_i- \dim_\field\big(\field(\cost_{\complet{\Delta}}v,\Theta)\big)_i 
= h_i(\complet{\Delta})-h_i(\cost_{\complet{\Delta}}v) \\
&=& h_{i-1}(\lk_{\complet{\Delta}}v) \stackrel{\mbox{\tiny{by \eqref{dim-lk}}}}{=}
\dim_\field \big(\field(\lk_{\complet{\Delta}}v, \Theta^v)\big)_{i-1}, 
\end{eqnarray*}
where the penultimate step uses \cite[Lemma 4.1]{Athanasiadis-11}. The result follows.
\end{proof}

We are now in a position to prove Theorem \ref{thm-Gorenstein}. Our proof follows the same outline as the proof of \cite[Theorem 1.4]{Novik-Swartz-09:Gorenstein} with an additional twist at the end.

\smallskip\noindent {\it Proof of Theorem \ref{thm-Gorenstein}: \ }
Let $\Delta$ be a $(d-1)$-dimensional, connected, orientable $\field$-homology manifold with boundary, and let $\Theta$ be a generic l.s.o.p.~for $\field[\complet{\Delta}]$. (As before, we assume w.l.o.g.~that all coefficients of $\theta_1$ are equal to $1$.) Then $\dim_\field \overline{\field(\complet{\Delta},\Theta)}_d =\tilde{\beta}_{d-1}(\complet{\Delta})=1$. Hence, we only need to verify that the socle of $\overline{\field(\complet{\Delta},\Theta)}=\field(\complet{\Delta},\Theta)/\Soc^\circ \field(\complet{\Delta},\Theta)$ vanishes in all degrees $j\neq d$. Since $\big(\Soc^\circ \field(\complet{\Delta},\Theta)\big)_d=0$ and $\big(\Soc^\circ \field(\complet{\Delta},\Theta)\big)_{d-1}=\big(\Soc \field(\complet{\Delta},\Theta)\big)_{d-1}$, this does hold for $j=d-1$.

Now, let $j\leq d-2$, and let $y\in \field(\complet{\Delta},\Theta)_j$ be such that $x_v\cdot y\in (\Soc \field(\complet{\Delta},\Theta))_{j+1}$ for every vertex $v$ of $\complet{\Delta}$. We must show that $y\in \Soc \field(\complet{\Delta},\Theta)$. Assume first that $v\neq v_0$. Then the isomorphism of Lemma \ref{analog-of-Prop4.24} implies that $y^v:=\phi_v^{-1}(x_v\cdot y)\in (\field(\lk_{\complet{\Delta}}v, \Theta^v))_j$ is in the socle of $\field(\lk_{\complet{\Delta}}v, \Theta^v)$. Since $\lk_{\complet{\Delta}}v$ is a $\field$-homology $(d-2)$-sphere,  $\field(\lk_{\complet{\Delta}}v, \Theta^v)$ is Gorenstein, and hence its socle vanishes in all degrees $\leq d-2$. Therefore, $y^v=0$. We conclude that
\begin{equation} \label{v-neq-v0}
x_v\cdot y=\phi_v(y^v)=0 \mbox{ in } \field(\complet{\Delta},\Theta) \quad \mbox{for all } v\neq v_0.
\end{equation}

Finally, to show that $x_{v_0}\cdot y=0$ in $\field(\complet{\Delta},\Theta)$, recall that $\theta_1=x_{v_0}+\sum_{v\neq v_0}x_v$, and so
\begin{equation} \label{v0}
\theta_1\cdot y=x_{v_0}\cdot y + \sum_{v\neq v_0} x_v\cdot y.
\end{equation}
The left-hand side of \eqref{v0} is zero in $\field(\complet{\Delta},\Theta)=\field[\complet{\Delta}]/\Theta\field[\complet{\Delta}]$. Furthermore, by \eqref{v-neq-v0},  all summands on the right-hand side of \eqref{v0}, except possibly $x_{v_0}\cdot y$, are zeros in $\field(\complet{\Delta},\Theta)$. Thus $x_{v_0}\cdot y$ must be zero in $\field(\complet{\Delta},\Theta)$. The result follows. 
\endproof

We now turn to some consequences of Theorem \ref{thm-Gorenstein}. As the Hilbert function of a Gorenstein graded $\field$-algebra of Krull dimension zero is always symmetric, one immediate corollary is
\begin{corollary} \label{symmetry}
Let $\Delta$ be a $(d-1)$-dimensional, connected, orientable $\field$-homology manifold with boundary. Then 
$h''_i(\complet{\Delta})=h''_{d-i}(\complet{\Delta})$ for all $0\leq i \leq d$.
\end{corollary}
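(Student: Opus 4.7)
The plan is to derive the corollary directly from Theorem \ref{thm-Gorenstein} via the standard fact that a graded Artinian Gorenstein $\field$-algebra has a symmetric Hilbert function. Let $R:=\overline{\field(\complet{\Delta},\Theta)}$. By Theorem \ref{thm-Gorenstein}, $R$ is Gorenstein (for a generic $\Theta$), and I need to read off that its socle lives in degree $d$ and apply symmetry.

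First I would identify the socle degree of $R$. Since $\field(\complet{\Delta},\Theta)_j=0$ for all $j>d$ (by \cite[Proposition III.2.4(b)]{Stanley-96}) and, because $\Delta$ is connected and orientable, $\dim_\field \field(\complet{\Delta},\Theta)_d=\tilde{\beta}_{d-1}(\complet{\Delta})=1$, the definition $\Soc^\circ(-):=\bigoplus_{i<d_0}(\Soc(-))_i$ ensures $\big(\Soc^\circ\field(\complet{\Delta},\Theta)\big)_d=0$. Thus $R_d$ is still one-dimensional and $R_i=0$ for $i>d$, so $d$ is precisely the top nonvanishing degree of $R$, i.e.\ the socle degree of the Gorenstein algebra $R$.

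Next, I would invoke the symmetry of the Hilbert function of a graded Artinian Gorenstein $\field$-algebra with socle degree $d$: the pairing $R_i\times R_{d-i}\to R_d\cong\field$ induced by multiplication is perfect (equivalently, $R$ is isomorphic as a graded module to its Matlis dual shifted by $d$), so $\dim_\field R_i=\dim_\field R_{d-i}$ for every $0\leq i\leq d$. Unpacking the definition of $h''$, this says exactly $h''_i(\complet{\Delta})=h''_{d-i}(\complet{\Delta})$, as required.

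There is essentially no obstacle here; the entire content sits in Theorem \ref{thm-Gorenstein}, and the corollary is a one-line application of the symmetry of Gorenstein Hilbert functions. The only subtlety worth spelling out is the verification that quotienting by $\Soc^\circ$ does not disturb the top degree $d$, so that the socle degree of $R$ is indeed $d$ and the symmetry is centered correctly.
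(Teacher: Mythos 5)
Your proposal is correct and is exactly the paper's argument: Corollary \ref{symmetry} is stated as an immediate consequence of Theorem \ref{thm-Gorenstein} together with the symmetry of the Hilbert function of an Artinian Gorenstein graded algebra. Your extra verification that the socle degree of $\overline{\field(\complet{\Delta},\Theta)}$ is $d$ is a fine (and harmless) elaboration of what the paper leaves implicit.
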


Let $\Gamma$ be a $\field$-homology $(m-1)$-sphere  or $(m-1)$-ball. We say that $\Gamma$ has the {\em weak Lefschetz property over $\field$} (the WLP, for short) if for a generic lsop $\Theta$ for $\field[\Gamma]$ and an additional generic linear form $\omega$, the map $\cdot \omega: \field(\Gamma,\Theta)_{\lfloor \frac{m}{2}\rfloor} \to \field(\Gamma,\Theta)_{\lfloor \frac{m}{2}\rfloor+1}$ is surjective. It was proved in \cite[Theorem 3.2]{Novik-Swartz-09:Gorenstein} that if $\Lambda$ is a $(d-1)$-dimensional, connected, orientable $\field$-homology manifold without boundary, and if all but at most $d$ vertex links of $\Lambda$ have the WLP over $\field$, then for generic $\Theta$ and $\omega$, the map $\cdot \omega: \overline{\field(\Lambda,\Theta)}_i \to \overline{\field(\Lambda,\Theta)}_{i+1}$ is an injection for $i<\lfloor \frac{d}{2}\rfloor$ and is a surjection for  $i\geq \lceil \frac{d}{2}\rceil$. The proof relied on \cite[Theorem 4.26]{Swartz-09} and on the Gorenstein property of $\overline{\field(\Lambda,\Theta)}$ established in \cite[Theorem 1.4]{Novik-Swartz-09:Gorenstein}). Noting that \cite[Theorem 4.26]{Swartz-09} continues to hold for $\complet{\Delta}$ and using Theorem~\ref{thm-Gorenstein} instead of \cite[Theorem 1.4]{Novik-Swartz-09:Gorenstein}, but leaving the rest of the proof of \cite[Theorem 3.2]{Novik-Swartz-09:Gorenstein} intact, yields the following generalization:

\begin{corollary} \label{WLP}
Let $\Delta$ be a $(d-1)$-dimensional connected, orientable $\field$-homology manifold with boundary. 
\begin{enumerate}
\item If $d\geq 4$, then the map $\cdot \omega: \overline{\field(\complet{\Delta},\Theta)}_i \to \overline{\field(\complet{\Delta},\Theta)}_{i+1}$
is an injection for $i\leq 1$ and is a surjection for $i\geq d-2$.
\item If for all vertices $v$ of $\Delta$, the link $\lk_{\complet{\Delta}}v$ has the WLP over $\field$, then the map 
$\cdot \omega: \overline{\field(\complet{\Delta},\Theta)}_i \to \overline{\field(\complet{\Delta},\Theta)}_{i+1}$ is an injection for all $i<\lfloor \frac{d}{2}\rfloor$ and is a surjection for all $i\geq \lceil \frac{d}{2}\rceil$.
\end{enumerate}
\end{corollary}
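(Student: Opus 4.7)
The plan is to transplant the argument of \cite[Theorem 3.2]{Novik-Swartz-09:Gorenstein} into the manifold-with-boundary setting. That argument rests on two pillars, both of which are now available here: the Gorenstein property of $\overline{\field(\complet{\Delta},\Theta)}$ (Theorem \ref{thm-Gorenstein}, together with the Hilbert function symmetry of Corollary \ref{symmetry}), and the isomorphism $\phi_v$ of Lemma \ref{analog-of-Prop4.24} identifying $(x_v)\field(\complet{\Delta},\Theta)$ with a degree-shifted copy of $\field(\lk_{\complet{\Delta}}v,\Theta^v)$ for every non-singular vertex $v\neq v_0$.

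First I would verify that \cite[Theorem 4.26]{Swartz-09} carries over verbatim to $\complet{\Delta}$. The hypotheses of that theorem hinge on the existence, for each vertex $v$, of the standard isomorphism between the principal ideal $(x_v)\field(\complet{\Delta},\Theta)$ and a shifted copy of the Artinian reduction of $\lk v$; this isomorphism is what converts the WLP of $\lk v$ into a statement about multiplication by $\omega$ on $(x_v)\field(\complet{\Delta},\Theta)$. For non-singular vertices $v\neq v_0$ Lemma \ref{analog-of-Prop4.24} supplies exactly that isomorphism, while the singular vertex $v_0$ is absorbed by choosing $\theta_1$ generically with all coefficients equal to $1$ and using the identity $x_{v_0}\cdot y=-\sum_{v\neq v_0}x_v\cdot y$ in $\field(\complet{\Delta},\Theta)$, exactly as in the closing paragraph of the proof of Theorem \ref{thm-Gorenstein}.

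Once Swartz's theorem is available for $\complet{\Delta}$, the symmetric statements about injectivity and surjectivity become equivalent under Gorenstein duality: since $\overline{\field(\complet{\Delta},\Theta)}$ has socle degree $d$, the multiplication pairing into the top degree is perfect, so $\cdot\omega$ is injective on degree $i$ if and only if $\cdot\omega$ is surjective on degree $d-i-1$. Part (2) then follows immediately from the adapted \cite[Theorem 4.26]{Swartz-09}: under the hypothesis that each $\lk_{\complet{\Delta}}v$ (with $v\neq v_0$) has the WLP, one obtains injectivity of $\cdot\omega$ on $\overline{\field(\complet{\Delta},\Theta)}_i$ for every $i<\lfloor d/2\rfloor$, and surjectivity on $i\geq\lceil d/2\rceil$ by duality. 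For part (1), I would observe that when $d\geq 4$ every link $\lk_{\complet{\Delta}}v$ is a $\field$-homology $(d-2)$-sphere with $d-2\geq 2$; the portion of the WLP actually invoked by the theorem in order to establish injectivity of $\cdot\omega$ on $\overline{\field(\complet{\Delta},\Theta)}_i$ for $i\leq 1$ amounts to injectivity of multiplication by a generic linear form on these links in degrees $0$ and $1$, which is automatic. Duality then yields surjectivity for $i\geq d-2$.

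I expect the main obstacle to be the bookkeeping in the first step: one must check that all of the degree shifts and surjectivity/injectivity ranges in \cite[Theorem 4.26]{Swartz-09} remain correct after the substitution of Lemma \ref{analog-of-Prop4.24} for ordinary vertex links and the $\theta_1$-trick for the singular vertex $v_0$. Once that verification is done, the remainder of the proof is a formal Gorenstein duality argument coupled with elementary checks in very low degrees.
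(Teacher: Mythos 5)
Your overall plan---adapting \cite[Theorem 4.26]{Swartz-09} to $\complet{\Delta}$ via Lemma \ref{analog-of-Prop4.24}, absorbing the singular vertex $v_0$ with the all-coefficients-one $\theta_1$ trick, and then invoking the Gorensteinness of $\overline{\field(\complet{\Delta},\Theta)}$ from Theorem \ref{thm-Gorenstein}---is exactly the route the paper takes, and your part (2) is essentially correct up to one directional point: the Swartz machinery transfers \emph{surjectivity} of $\cdot\omega$ on the links (through the ideals $(x_v)\field(\complet{\Delta},\Theta)\cong\field(\lk_{\complet{\Delta}}v,\Theta^v)(-1)$, whose images span the relevant graded pieces) into surjectivity of $\cdot\omega$ on $\field(\complet{\Delta},\Theta)$, hence on $\overline{\field(\complet{\Delta},\Theta)}$, in high degrees; injectivity in low degrees is then deduced by Gorenstein duality. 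It does not transfer injectivity of link maps into injectivity upstairs, and note also that injectivity of $\cdot\omega$ on $\field(\complet{\Delta},\Theta)_1$ would not automatically descend to the socle quotient $\overline{\field(\complet{\Delta},\Theta)}_1$, since a kernel element there need only be mapped into $\Soc^\circ\field(\complet{\Delta},\Theta)$. For part (2) this reversal is harmless because the degree ranges are dual-symmetric, but it becomes a real gap in part (1).

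In part (1) you assert that what is needed is ``injectivity of multiplication by a generic linear form on these links in degrees $0$ and $1$, which is automatic,'' and you propose to get injectivity for $i\le 1$ directly and surjectivity for $i\ge d-2$ by duality. This fails on both counts. The correct order is the reverse: one proves surjectivity of $\cdot\omega\colon\overline{\field(\complet{\Delta},\Theta)}_i\to\overline{\field(\complet{\Delta},\Theta)}_{i+1}$ for $i\ge d-2$ from the links, and only then dualizes to get injectivity for $i\le 1$. Moreover, the link-level input needed for surjectivity in degree $d-2$ is surjectivity of $\cdot\omega\colon\field(\lk_{\complet{\Delta}}v,\Theta^v)_{d-3}\to\field(\lk_{\complet{\Delta}}v,\Theta^v)_{d-2}$, which, by the Gorenstein duality of the $(d-2)$-dimensional homology sphere link (socle degree $d-1$), is equivalent to injectivity of $\cdot\omega$ on the link in degree $1\to 2$. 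That statement is genuinely nontrivial---it is a rigidity-type theorem that fails for general Artinian Gorenstein algebras, and for homology spheres it ultimately rests on the weak Lefschetz property of all triangulated $2$-spheres over any infinite field (\cite{Whiteley-90} together with \cite[Cor.~3.5]{Murai-10:Shifting}); this is precisely the ingredient Remark \ref{rem:WLP} identifies as the reason part (1) requires no hypothesis on the links. To repair your argument, establish surjectivity for $i\ge d-2$ via the adapted \cite[Theorem 4.26]{Swartz-09} using the Whiteley--Murai fact for the $(d-2)$-sphere links, and then obtain injectivity for $i\le 1$ by duality in $\overline{\field(\complet{\Delta},\Theta)}$.
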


\begin{remark} \label{rem:WLP}
By a result of Stanley \cite{Stanley-80}, the boundary complexes of all simplicial polytopes have the WLP over $\Q$. Furthermore, it follows from \cite[Cor.~3.5]{Murai-10:Shifting} and \cite{Whiteley-90} that all triangulations of $2$-dimensional spheres have the WLP over {\em any} infinite field. (This result is the reason no assumption on vertex links is needed in part 1 of the corollary.) A very recent preprint by Adiprasito \cite{Adiprasito-g} announces a spectacular generalization of these theorems: for an arbitrary infinite field $\field$, every $\field$-homology sphere has the weak Lefschetz (and even strong Lefschetz) property over $\field$, and so the hypothesis of the WLP assumption in the statement of Corollary \ref{WLP} as well as in the rest of the paper might be unnecessary.
\end{remark}

To apply results of this section to the study of face numbers of homology manifolds with boundary, we first need to work out the $h''$-numbers of $\complet{\Delta}$, that is, the Hilbert function of $\overline{\field(\complet{\Delta},\Theta)}$. This is done in the next section.

\section{The $h''$-numbers of $\complet{\Delta}$} \label{sec:h''-and-socles}
In this section we prove the following extension of Theorem \ref{known-h'-h''}(3) to manifolds with boundary. 

\begin{theorem} \label{soc-and-h''}
Let $\Delta$ be a $(d-1)$-dimensional, connected, orientable $\field$-homology manifold with boundary and let $\Theta$ be a generic l.s.o.p.~for $\field[\complet{\Delta}]$. Then  for all $i<d$,
\begin{enumerate}
\item $\dim_\field \big(\Soc \field(\complet{\Delta},\Theta)\big)_i=\binom{d-1}{i-1}\tilde{\beta}_{i-1}(\complet{\Delta})+\binom{d-1}{i}\tilde{\beta}_{i-1}(\Delta)$, and
\item $h''_i(\complet{\Delta})=h_i(\complet{\Delta})-\sum_{j=1}^{i}(-1)^{i-j}\left(\binom{d-1}{i-1}\tilde{\beta}_{j-1}(\complet{\Delta})+\binom{d-1}{i}\tilde{\beta}_{j-1}(\Delta)\right)$.
\end{enumerate}
\end{theorem}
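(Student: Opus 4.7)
The plan is to establish part (1) first and deduce (2) from (1) together with Theorem~\ref{known-h'-h''}(4), so the bulk of the work lies in the socle computation. For the reduction, apply Theorem~\ref{known-h'-h''}(4) to $\complet{\Delta}$ (which has at most one singular vertex, $v_0$, and satisfies $\cost_{\complet{\Delta}} v_0 = \Delta$):
\[
h'_i(\complet{\Delta}) = h_i(\complet{\Delta}) - \sum_{j=1}^{i-1}(-1)^{i-j}\!\left(\binom{d-1}{i-1}\tilde{\beta}_{j-1}(\complet{\Delta}) + \binom{d-1}{i}\tilde{\beta}_{j-1}(\Delta)\right).
\]
The homotopy equivalence $\complet{\Delta}\simeq \Delta/\partial\Delta$ (obtained by collapsing the contractible subcomplex $v_0\ast\partial\Delta$) combined with Lefschetz duality for the connected orientable manifold with boundary $\Delta$ gives $\tilde{\beta}_{d-1}(\complet{\Delta}) = \dim H_0(\Delta) = 1$. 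Hence $h'_d(\complet{\Delta})=1\ne 0$, so the top nonzero degree of $\field(\complet{\Delta},\Theta)$ is $d$, and therefore $(\Soc^\circ\,\field(\complet{\Delta},\Theta))_i = (\Soc\,\field(\complet{\Delta},\Theta))_i$ in every degree $i\leq d-1$. By the definition of $h''$ we then have $h''_i(\complet{\Delta}) = h'_i(\complet{\Delta}) - \dim_\field(\Soc\,\field(\complet{\Delta},\Theta))_i$ for $i<d$, and substituting (1) into this identity adds exactly the $j=i$ term $\binom{d-1}{i-1}\tilde{\beta}_{i-1}(\complet{\Delta}) + \binom{d-1}{i}\tilde{\beta}_{i-1}(\Delta)$ to the alternating sum above, yielding (2).

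For (1) I plan to extract the socle dimensions from Gräbe's local cohomology description (Theorem~\ref{local-cohom-1-singul}) applied to $\complet{\Delta}$. For $0\leq q<d$ that theorem yields: $H^q_\mideal(\field[\complet{\Delta}])$ is supported in non-positive degrees, with $H^q_\mideal(\field[\complet{\Delta}])_0 \cong \tilde{H}^{q-1}(\complet{\Delta})$ and each $H^q_\mideal(\field[\complet{\Delta}])_{-j}\cong \tilde{H}^{q-1}(\complet{\Delta},\Delta)$ for $j\geq 1$; multiplication by $x_v$ vanishes for every $v\neq v_0$, while $\cdot x_{v_0}$ is the identity between consecutive negative-degree pieces and the inclusion-induced map $\iota^*$ in the transition $-1\to 0$. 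It follows that every $\theta_k$ from a generic l.s.o.p.~$\theta_1,\ldots,\theta_d$ acts on $H^q_\mideal(\field[\complet{\Delta}])$ as a nonzero scalar multiple of $\cdot x_{v_0}$, so after a change of basis one may assume $\theta_1$ acts as $\cdot x_{v_0}$ while $\theta_2,\ldots,\theta_d$ act trivially on every $H^q_\mideal$. The Koszul complex $K^\bullet(\Theta;H^q_\mideal(\field[\complet{\Delta}]))$ then decomposes as the two-term Koszul complex of $\cdot x_{v_0}$ tensored with a trivial exterior algebra on $d-1$ generators, and the binomials $\binom{d-1}{i-1}$, $\binom{d-1}{i}$ in (1) arise directly from counting exterior powers in this trivial factor. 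Feeding this into the Schenzel-type spectral sequence that passes from $H^\bullet_\mideal(\field[\complet{\Delta}])$ to $H^\bullet_\mideal(\field(\complet{\Delta},\Theta))$---extended to the one-singular-vertex setting in \cite{Novik-Swartz-12}---and identifying the socle of the Artinian reduction with the top local cohomology produces an expression for $\dim_\field(\Soc\,\field(\complet{\Delta},\Theta))_i$ in terms of $\dim \tilde{H}^{i-1}(\complet{\Delta})$ and $\dim \tilde{H}^{i-1}(\complet{\Delta},\Delta)$.

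The first summand of (1), namely $\binom{d-1}{i-1}\tilde{\beta}_{i-1}(\complet{\Delta})$, will then be immediate from the degree-$0$ piece of Gräbe's description. The second summand arises from the negative-degree part of the local cohomology, where Gräbe produces $\tilde{H}^{i-1}(\complet{\Delta},\Delta)\cong \tilde{H}^{i-2}(\partial\Delta)$---a quantity a priori unrelated to $\tilde{\beta}_{i-1}(\Delta)$. The main technical obstacle I anticipate is precisely this reconciliation: matching the naive output of the Koszul/spectral-sequence computation with the desired form $\binom{d-1}{i}\tilde{\beta}_{i-1}(\Delta)$ will require combining the long exact sequence of the pair $(\complet{\Delta},\Delta)$ with Lefschetz duality for the connected orientable manifold with boundary $\Delta$, and tracking cancellations between $\tilde{H}^{\bullet}(\complet{\Delta})$ and $\tilde{H}^{\bullet}(\complet{\Delta},\Delta)$ across the spectral sequence. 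This is also precisely where the connectedness and orientability hypothesis on $\Delta$ enters essentially.
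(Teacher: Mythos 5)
Your reduction of part (2) to part (1) and your reading of Gr\"abe's description of $H^{q}_\mideal(\field[\complet{\Delta}])$ for $q<d$ are fine, but the engine of your socle computation is a genuine gap. There is no identification of $\Soc \field(\complet{\Delta},\Theta)$ with ``the top local cohomology'' of the Artinian reduction (for an Artinian module all positive local cohomology vanishes and $H^0_\mideal$ is the whole module), and the unspecified ``Schenzel-type spectral sequence'' together with ``counting exterior powers'' does not by itself yield an \emph{equality} of socle dimensions. The result that actually does this is the socle theorem for Buchsbaum modules \cite[Theorem 2.2]{Novik-Swartz-09:Socles}, and it cannot be applied to $\field[\complet{\Delta}]$ directly, because $\field[\complet{\Delta}]$ is not Buchsbaum: as your own description shows, $x_{v_0}$ acts as the identity on the negative-degree parts of $H^{q}_\mideal(\field[\complet{\Delta}])$ for $q<d$, so $\mideal$ does not annihilate them. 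The paper's route is to pass first to $\field[\complet{\Delta}]/\theta_1\field[\complet{\Delta}]$, which \emph{is} Buchsbaum of Krull dimension $d-1$ by \cite[Lemma 4.3(2)]{Novik-Swartz-12}, and then apply the socle theorem with the remaining $d-1$ parameters (Lemma \ref{mod-theta1-decompos}); that is where the binomials $\binom{d-1}{i-1}$ and $\binom{d-1}{i}$ really come from. Moreover, the identification of the degree-$0$ and degree-$1$ pieces of $H^{j}_\mideal$ of this quotient with $\tilde{\beta}_{j-1}(\Delta)$ and $\tilde{\beta}_{j}(\complet{\Delta})$ uses only the long exact sequence of the pair $(\complet{\Delta},\Delta)$ applied to cokernels and kernels of $\cdot\theta_1$ --- no Lefschetz duality --- so you have also misplaced where connectedness and orientability enter.

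The step your plan omits entirely is the contribution of the top local cohomology. The Buchsbaum decomposition carries an extra summand $\SB\subseteq \Soc H^{d-1}_\mideal\big(\field[\complet{\Delta}]/\theta_1\field[\complet{\Delta}]\big)$, which is fed by $H^{d}_\mideal(\field[\complet{\Delta}])$, and the heart of the proof is showing that it vanishes in the degrees relevant to $i\le d-2$ (Proposition \ref{SB} and Lemma \ref{vanishing-socle}); this uses Gr\"abe's more intricate description of $H^{d}_\mideal$ (Theorem \ref{Grabe-d}) together with a topological argument in which connectedness and orientability force the maps $\iota^\ast\colon H^{d-1}_{s(\rho)}(\complet{\Delta})\to H^{d-1}_{s(\rho/x_v)}(\complet{\Delta})$ to be isomorphisms. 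Without an analogue of this vanishing, your count gives at best an inequality, not the stated equality. Finally, even with all of this the direct computation only covers $i\le d-2$: in degree $d-1$ the $\SB$-contribution need not vanish, and the paper completes that case via the symmetry $h''_i(\complet{\Delta})=h''_{d-i}(\complet{\Delta})$ coming from the Gorenstein property (Theorem \ref{thm-Gorenstein}) plus a Dehn--Sommerville-type computation; your proposal has no mechanism for this boundary case.
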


\begin{remark} \label{rem:h''-v2}
It is instructive to rewrite both formulas of the theorem purely in terms of $\Delta$. Indeed, by connectivity, $\tilde{\beta}_0(\complet{\Delta})=\tilde{\beta}_0(\Delta)=0$, while
\[\tilde{H}_{j-1}(\complet{\Delta};\field)\cong \tilde{H}_{j-1}(\complet{\Delta}, \st_{\complet{\Delta}}v_0;\field)\cong \tilde{H}_{j-1}(\Delta, \partial\Delta;\field)\cong \tilde{H}_{d-j}(\Delta;\field) \quad  \forall ~ j< d,\]
where the first step follows from the acyclicity of vertex stars, the second by excision, and the third by Poincar\'e-Lefschetz duality. Furthermore,  $h_i(\complet{\Delta})=h_i(\Delta)+h_{i-1}\big(\lk_{\complet{\Delta}}v_0\big)=h_i(\Delta)+h_{i-1}(\partial\Delta)$ (see \cite[Lemma 4.1]{Athanasiadis-11}).
Thus, for $i<d$, Theorem \ref{soc-and-h''} can be rewritten as
\begin{enumerate}
\item $\dim_\field \big(\Soc \field(\complet{\Delta},\Theta)\big)_i=\binom{d-1}{i}\tilde{\beta}_{i-1}(\Delta)+\binom{d-1}{i-1}\tilde{\beta}_{d-i}(\Delta)$;
\item
$h''_i(\complet{\Delta})=h_i(\Delta)+h_{i-1}(\partial\Delta)-\sum_{j=2}^{i}(-1)^{i-j}\left(\binom{d-1}{i}\tilde{\beta}_{j-1}(\Delta)+\binom{d-1}{i-1}\tilde{\beta}_{d-j}(\Delta)\right).$
\end{enumerate}
Note that if $\Delta$ is a connected, orientable $\field$-homology manifold without boundary, then (1) $\complet{\Delta}=\Delta$, (2)  $\tilde{\beta}_{d-j}(\Delta)=\tilde{\beta}_{j-1}(\Delta)$ for all $1<j<d$ (by Poincar\'e duality), and (3) $h_{i-1}(\partial\Delta)=0$ for all $i$ (since $\partial\Delta=\emptyset$). In this case, the above formula for $h''_i(\complet{\Delta})$ reduces to Theorem \ref{known-h'-h''}(3).
\end{remark}

To prove Theorem \ref{soc-and-h''}, several lemmas are in order. As in the previous section,  we continue to assume that $\Theta$ is a generic l.s.o.p.~for $\field[\complet{\Delta}]$ and that all coefficients of $\theta_1$ are equal to 1.

\begin{lemma} \label{mod-theta1-decompos}
Let $\Delta$ be a $(d-1)$-dimensional $\field$-homology manifold with boundary. Then 
\[
\big(\Soc\field(\complet{\Delta},\Theta)\big)_i\cong \left(\bigoplus_{j=0}^{d-2}\binom{d-1}{j}\big(H^j_\mideal\big(\field[\complet{\Delta}]/\theta_1\field[\complet{\Delta}]\big)\big)_{i-j}\right)\bigoplus (\SB)_{i-(d-1)} \quad \forall i\in\Z,
\]
where $\SB$ is a graded submodule of $\Soc H^{d-1}_\mideal\big(\field[\complet{\Delta}]/\theta_1\field[\complet{\Delta}]\big)$.
Furthermore, for $j\leq d-2$,
\[
\dim_\field \big(H^j_\mideal\big(\field[\complet{\Delta}]/\theta_1\field[\complet{\Delta}]\big)\big)_\ell=
\left\{\begin{array}{ll}
\tilde{\beta}_j(\complet{\Delta}) & \mbox{ if $\ell=1$}\\
\tilde{\beta}_{j-1}(\Delta) &  \mbox{ if $\ell=0$}\\
0 & \mbox{ otherwise}.
\end{array}
\right.
\]
\end{lemma}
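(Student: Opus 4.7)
The plan is to treat the two parts essentially independently: the socle decomposition and the Hilbert function of $H^j_\mideal(\bar M)$, where I abbreviate $\bar M:=\field[\complet{\Delta}]/\theta_1\field[\complet{\Delta}]$.

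For the socle decomposition, I would first note that $\Delta$ connected implies $\complet{\Delta}$ connected, so $\depth\field[\complet{\Delta}]\geq 1$ and the generic form $\theta_1$ is a non-zero-divisor on $\field[\complet{\Delta}]$. Hence $\bar M$ has Krull dimension $d-1$ and $\Theta':=\theta_2,\ldots,\theta_d$ is a generic l.s.o.p.~for $\bar M$. At this point I would invoke the general socle-decomposition theorem from \cite{Novik-Swartz-12} (the abstract statement behind Theorem~4.7 there, which does not require Cohen--Macaulayness): for any graded module $N$ of Krull dimension $r$ and generic l.s.o.p.~$\Xi$ of length $r$, $\Soc(N/\Xi N)_i$ decomposes as $\bigoplus_{j=0}^{r-1}\binom{r}{j}H^j_\mideal(N)_{i-j}\oplus(\SB)_{i-r}$ with $\SB\subseteq\Soc H^r_\mideal(N)$. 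Applied to $N=\bar M$, $r=d-1$, and $\Xi=\Theta'$, this yields the claimed formula.

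For the Hilbert function of $H^j_\mideal(\bar M)$, the plan is to use the short exact sequence $0\to\field[\complet{\Delta}](-1)\xrightarrow{\theta_1}\field[\complet{\Delta}]\to\bar M\to 0$ (exact since $\theta_1$ is regular) and its long exact sequence in local cohomology. By Gr\"abe's Theorem~\ref{local-cohom-1-singul}, for $j\leq d-1$ the local cohomology $H^j_\mideal(\field[\complet{\Delta}])$ lives in non-positive degrees, with $H^{j-1}_\emptyset(\complet{\Delta})$ in degree $0$ and $H^{j-1}_{\{v_0\}}(\complet{\Delta})$ in each negative degree; moreover $\cdot\theta_1$ coincides with $\cdot x_{v_0}$ on these modules since $\cdot x_v=0$ for $v\neq v_0$. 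For $\ell\leq -1$ both $\cdot\theta_1$ maps in the sequence are identity isomorphisms between copies of $H^{j-1}_{\{v_0\}}$ and $H^j_{\{v_0\}}$, forcing $H^j_\mideal(\bar M)_\ell=0$. For $\ell\geq 2$ all four surrounding terms vanish by positive-degree vanishing of $H^*_\mideal(\field[\complet{\Delta}])$, so again $H^j_\mideal(\bar M)_\ell=0$. For $\ell=1$ the sequence collapses to $H^j_\mideal(\bar M)_1\cong H^{j+1}_\mideal(\field[\complet{\Delta}])_0\cong\tilde H^j(\complet{\Delta};\field)$, of dimension $\tilde\beta_j(\complet{\Delta})$.

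The main obstacle is the $\ell=0$ case. Here the relevant segment of the long exact sequence is
\[H^{j-1}_{\{v_0\}}(\complet{\Delta})\xrightarrow{\iota^*}\tilde H^{j-1}(\complet{\Delta})\to H^j_\mideal(\bar M)_0\to H^j_{\{v_0\}}(\complet{\Delta})\xrightarrow{\iota^*}\tilde H^j(\complet{\Delta}),\]
so $H^j_\mideal(\bar M)_0\cong\operatorname{coker}(\iota^*|_{j-1})\oplus\ker(\iota^*|_j)$. Since $\cost_{\complet{\Delta}}v_0=\Delta$, the group $H^i_{\{v_0\}}(\complet{\Delta})$ identifies with $\tilde H^i(\complet{\Delta},\Delta)$ and $\iota^*$ with the canonical map appearing in the long exact sequence of the pair $(\complet{\Delta},\Delta)$. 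Chasing that sequence $\tilde H^{j-1}(\Delta)\to\tilde H^j(\complet{\Delta},\Delta)\xrightarrow{\iota^*}\tilde H^j(\complet{\Delta})\to\tilde H^j(\Delta)$ and its degree-shifted analog, a short dimension count gives $\dim\operatorname{coker}(\iota^*|_{j-1})+\dim\ker(\iota^*|_j)=\tilde\beta_{j-1}(\Delta)$, completing the Hilbert function computation.
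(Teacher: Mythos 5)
The main gap is in your first step. The socle decomposition you invoke is not ``a general theorem for any graded module $N$ of Krull dimension $r$ and generic l.s.o.p.'': the statement $\Soc(N/\Xi N)_i\cong\bigl(\bigoplus_{j=0}^{r-1}\binom{r}{j}H^j_\mideal(N)_{i-j}\bigr)\oplus(\SB)_{i-r}$ with $\SB\subseteq\Soc H^r_\mideal(N)$ is \cite[Theorem 2.2]{Novik-Swartz-09:Socles}, and it is stated and proved for \emph{Buchsbaum} modules (its proof uses, among other things, that $\mideal$ annihilates the lower local cohomology modules); genericity of the linear system of parameters is not a substitute for that hypothesis, and Theorem 4.7 of \cite{Novik-Swartz-12} is a formula for $h'$-numbers of complexes with isolated singularities, not an abstract socle decomposition valid for arbitrary modules. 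Consequently the step you skip is exactly the crux of the argument: one must first show that $\bar M=\field[\complet{\Delta}]/\theta_1\field[\complet{\Delta}]$ is a Buchsbaum $A$-module of Krull dimension $d-1$, and this is where the structure of $\complet{\Delta}$ enters --- because $\complet{\Delta}$ has at most one singular vertex (namely $v_0$) and $\theta_1$ is generic, \cite[Lemma 4.3(2)]{Novik-Swartz-12} gives Buchsbaumness of $\bar M$; only then may the Buchsbaum socle theorem be applied to $\bar M$ with the remaining parameters $\theta_2,\ldots,\theta_d$. As written, your appeal to a nonexistent general theorem leaves the first displayed isomorphism unjustified. (A minor additional point: connectivity of $\Delta$ is not among the hypotheses of the lemma and is not needed; $\depth\field[\complet{\Delta}]\geq 1$, hence $\theta_1$ being a non-zero-divisor, holds for any such complex and generic $\theta_1$.)

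Your second half is correct and is in fact a self-contained re-derivation of what the paper simply cites: running the long exact sequence in local cohomology for $0\to\field[\complet{\Delta}](-1)\xrightarrow{\cdot\theta_1}\field[\complet{\Delta}]\to\bar M\to 0$, using Gr\"abe's Theorem \ref{local-cohom-1-singul} (with $\cdot\theta_1$ acting as $\cdot x_{v_0}$), the identification $\cost_{\complet{\Delta}}v_0=\Delta$, and the long exact sequence of the pair $(\complet{\Delta},\Delta)$ for the degree-$0$ count, reproduces \cite[Lemma 4.3(1)]{Novik-Swartz-12}. That part of your computation, including the $\ell=0$ dimension count giving $\tilde{\beta}_{j-1}(\Delta)$, checks out.
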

\proof Since $\complet{\Delta}$ has at most one singularity, Lemma 4.3(2) of \cite{Novik-Swartz-12} implies that $\field[\complet{\Delta}]/\theta_1\field[\complet{\Delta}]$ is a Buchsbaum $A$-module of Krull dimension $d-1$. The first part of the statement then follows from \cite[Theorem 2.2]{Novik-Swartz-09:Socles}, while the second part follows from  \cite[Lemma 4.3(1) and Theorem 4.7]{Novik-Swartz-12}.
\endproof

We now turn our attention to the submodule $\SB$ of $\Soc H^{d-1}_\mideal\big(\field[\complet{\Delta}]/\theta_1\field[\complet{\Delta}]\big)$.
\begin{proposition} \label{SB}
Let $\Delta$ be a $(d-1)$-dimensional, connected, orientable $\field$-homology manifold with boundary. Then, for all $\ell\leq -1$, $\big(\Soc H^{d-1}_\mideal\big(\field[\complet{\Delta}]/\theta_1\field[\complet{\Delta}]\big)\big)_\ell=0$, and hence $(\SB)_\ell=0$.
\end{proposition}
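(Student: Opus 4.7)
The plan is to exploit the long exact sequence in local cohomology coming from the short exact sequence
\[0\to R(-1)\xrightarrow{\theta_1} R\to R/\theta_1 R\to 0,\]
where $R=\field[\complet{\Delta}]$; this sequence is exact because connectedness of $\complet{\Delta}$ gives $\depth R\geq 1$, making $\theta_1$ a non-zero-divisor. By Gr\"abe's Theorem~\ref{local-cohom-1-singul}, multiplication by $x_w$ annihilates $H^{d-1}_\mideal(R)$ for all $w\neq v_0$, while $x_{v_0}$ acts as the identity on $H^{d-1}_\mideal(R)_\ell=H^{d-2}_{\{v_0\}}(\complet{\Delta})$ in every degree $\ell\leq -1$. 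Since $\theta_1=\sum_v x_v$, this means $\theta_1\colon H^{d-1}_\mideal(R)_{\ell-1}\to H^{d-1}_\mideal(R)_\ell$ is an isomorphism for $\ell\leq -1$. Consequently, the connecting map
\[\beta\colon H^{d-1}_\mideal(R/\theta_1 R)_\ell\hookrightarrow H^d_\mideal(R)_{\ell-1}\]
is injective for $\ell\leq -1$, and since $\beta$ is $R$-linear it carries socle into socle. It therefore suffices to prove $(\Soc H^d_\mideal(R))_m=0$ for all $m\leq -2$.

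For this I use Gr\"abe's Theorem~\ref{Grabe-d}, which describes $H^d_\mideal(R)_{-j}=\bigoplus_{\rho\in\M_j(\complet{\Delta})}\HH_\rho$ with $\HH_\rho=\tilde H^{d-1-|s(\rho)|}(\lk_{\complet{\Delta}}s(\rho);\field)$. A key preliminary observation is that for every face $F\in\complet{\Delta}$ with $F\neq\emptyset,\{v_0\}$, the link $\lk_{\complet{\Delta}}F$ is a $\field$-homology $(d-1-|F|)$-sphere: if $v_0\notin F$ and $F$ is interior to $\Delta$, this is immediate; if $v_0\notin F$ and $F$ is a boundary face, $\lk_{\complet{\Delta}}F$ is the completion of the ball $\lk_\Delta F$; and if $v_0\in F$ with $|F|\geq 2$, $\lk_{\complet{\Delta}}F=\lk_{\partial\Delta}(F\setminus\{v_0\})$, a sphere in the closed homology manifold $\partial\Delta$. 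In particular $\HH_\rho\cong\field$ whenever $\rho$ is squarefree of degree $\geq 2$.

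Now let $a=\sum_\rho a_\rho\in H^d_\mideal(R)_{-j}$ be a socle element with $j\geq 2$. For each $\sigma\in\M_{j-1}$ and each vertex $\ell$, the only $\rho\in\M_j$ contributing to the $\HH_\sigma$-component of $x_\ell\cdot a$ is $\rho=\sigma\cdot x_\ell$, so the socle conditions decouple into one per $\rho$. If $x_v^2\mid\rho$ for some $v$, then $x_v$ acts as the identity $\HH_\rho\to\HH_{\rho/x_v}$, forcing $a_\rho=0$; hence $a$ is supported on squarefree monomials $\rho=x_{F'}$ with $|F'|=j$. For each such $\rho$, choose $\ell\in F'$ with $F:=F'\setminus\{\ell\}\neq\{v_0\}$ (take $\ell=v_0$ if $v_0\in F'$, and any $\ell$ otherwise). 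The socle condition at $\ell$ reads $\iota^*(a_\rho)=0$ in $\HH_{x_F}$, where $\iota^*$ corresponds under the identification $H^{d-1}_G(\complet{\Delta})\cong\tilde H^{d-1-|G|}(\lk G)$ to the connecting homomorphism $\delta\colon\tilde H^{d-2-|F|}(\lk F')\to\tilde H^{d-1-|F|}(\lk F)$ in the long exact cohomology sequence of the pair $(\lk F,\cost_{\lk F}\{\ell\})$. Since $\lk F$ is a $(d-1-|F|)$-homology sphere while $\cost_{\lk F}\{\ell\}$ is a $(d-1-|F|)$-homology ball (the contrastar of a vertex in a sphere) with vanishing top reduced cohomology, that sequence reads $\field\xrightarrow{\delta}\field\to 0$ in the relevant degree, so $\delta$ is an isomorphism and $a_\rho=0$.

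The main technical hurdle is the identification of Gr\"abe's algebraically defined $\iota^*$ with the topological connecting map $\delta$; this is standard but requires carefully unpacking the excision-style isomorphism that underlies the link interpretation of $H^i_G(\complet{\Delta})$. Granting this identification, the argument above gives $(\Soc H^d_\mideal(R))_m=0$ for $m\leq -2$, hence $(\Soc H^{d-1}_\mideal(R/\theta_1 R))_\ell=0$ for $\ell\leq -1$, and consequently $(\SB)_\ell=0$ in those degrees by the containment of $\SB$ inside this socle.
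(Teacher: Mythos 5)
Your proposal is correct, and its first half is exactly the paper's argument: the same short exact sequence, the same appeal to Gr\"abe's Theorem~\ref{local-cohom-1-singul} to see that $\cdot\theta_1$ acts bijectively on $H^{d-1}_\mideal(\field[\complet{\Delta}])$ in negative target degrees, and the same reduction to $\big(\Soc H^{d}_\mideal(\field[\complet{\Delta}])\big)_{-j}=0$ for $j\geq 2$ (the paper phrases this via the cokernel/kernel splitting of $H^{d-1}_\mideal(\field[\complet{\Delta}]/\theta_1\field[\complet{\Delta}])$; your injectivity-of-the-connecting-map formulation is equivalent). Where you genuinely diverge is in proving that vanishing, i.e.\ the paper's Lemma~\ref{vanishing-socle}: the paper argues globally, comparing both groups $H^{d-1}_{s(\rho)}(\complet{\Delta})$ and $H^{d-1}_{s(\rho/x_v)}(\complet{\Delta})$ with $\tilde{H}^{d-1}(\|\complet{\Delta}\|)$, which is precisely where connectedness and orientability enter; you argue locally, using that every face $F\neq\emptyset,\{v_0\}$ has a homology-sphere link and that the contrastar of a vertex in that link is acyclic. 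Your route thus needs neither orientability nor connectedness for this step, a genuine (if mild) gain in generality. Your flagged ``technical hurdle'' is real but avoidable: instead of matching Gr\"abe's $\iota^\ast$ with a link-level connecting map, use the long exact sequence of the triple $(\complet{\Delta},\cost F',\cost F)$, in which $\iota^\ast$ is literally one of the maps, and identify the flanking terms $H^{i}(\cost F',\cost F)\cong \tilde{H}^{i-|F|}\big(\cost_{\lk_{\complet{\Delta}}F}\,\ell\big)$ by the standard star--link excision; acyclicity of that homology ball in degrees $d-2-|F|$ and $d-1-|F|$ then gives the isomorphism with no compatibility check. Finally, note that your preliminary observation in the boundary-face case silently uses that the completion of a homology ball is a homology sphere; this is true (and the paper also uses it without proof, e.g.\ in Lemmas~\ref{lem:homol-of-complex-and-costar} and~\ref{vanishing-socle}), but it deserves a sentence.
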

\proof
Since $\depth \field[\complet{\Delta}]\geq 1$, $\theta_1$ is a non-zero divisor on $\field[\complet{\Delta}]$; in other words, the sequence
\[
0\to \field[\complet{\Delta}](-1) \stackrel{\cdot \theta_1}{\longrightarrow} \field[\complet{\Delta}] \longrightarrow \field[\complet{\Delta}]/\theta_1\field[\complet{\Delta}]\to 0
\]
is exact. (For a graded $A$-module $M$, $M(-1)$ denotes $M$ with grading defined by $M(-1)_\ell=M_{\ell-1}$.) 

The above sequence induces a long exact sequence in local cohomology. In particular, the part
\[
H^{d-1}_\mideal\big(\field[\complet{\Delta}]\big)(-1) \stackrel{\cdot \theta_1}{\longrightarrow} H^{d-1}_\mideal\big(\field[\complet{\Delta}]\big) \longrightarrow H^{d-1}_\mideal\big(\field[\complet{\Delta}]/\theta_1\field[\complet{\Delta}]\big)\longrightarrow H^{d}_\mideal\big(\field[\complet{\Delta}]\big)(-1)  \stackrel{\cdot \theta_1}{\longrightarrow} H^{d}_\mideal\big(\field[\complet{\Delta}]\big)
\]
is exact.  Thus, $H^{d-1}_\mideal\big(\field[\complet{\Delta}]/\theta_1\field[\complet{\Delta}]\big)$, considered as a vector space, is isomorphic to the direct sum of $\C:=\Coker \big[H^{d-1}_\mideal\big(\field[\complet{\Delta}]\big)(-1) \stackrel{\cdot \theta_1}{\longrightarrow} H^{d-1}_\mideal\big(\field[\complet{\Delta}]\big)\big]$ and $\K:=\Ker \big[H^{d}_\mideal\big(\field[\complet{\Delta}]\big)(-1)  \stackrel{\cdot \theta_1}{\longrightarrow} H^{d}_\mideal\big(\field[\complet{\Delta}]\big)\big]$. Futhermore, on the $\K$-part of $H^{d-1}_\mideal\big(\field[\complet{\Delta}]/\theta_1\field[\complet{\Delta}]\big)$, the $A$-module structure is induced by the $A$-module structure on $H^{d}_\mideal\big(\field[\complet{\Delta}]\big)$.

Since $\complet{\Delta}$ has (at most) one singular vertex, namely $v_0$,   Theorem \ref{local-cohom-1-singul} implies that for $\ell\leq -1$, the map $\cdot\theta_1: \big(H^{d-1}_\mideal\big(\field[\complet{\Delta}]\big)\big)_{\ell-1}\to  \big(H^{d-1}_\mideal\big(\field[\complet{\Delta}]\big)\big)_{\ell}$ is the identity map. Hence its cokernel, $\C_\ell$, vanishes for all $\ell\leq -1$. Therefore, it only remains to show that the socle $(\Soc \K)_\ell$, vanishes for all $\ell\leq -1$.  Indeed, by definition of socles,
\begin{eqnarray*}
(\Soc \K)_\ell &=&\left(\Soc \Ker \big[\cdot \theta_1: H^{d}_\mideal\big(\field[\complet{\Delta}]\big)(-1)  \longrightarrow H^{d}_\mideal\big(\field[\complet{\Delta}]\big)\big]\right)_\ell \\
&=& \left(\Soc H^{d}_\mideal\big(\field[\complet{\Delta}]\big)(-1)\right)_\ell=\left(\Soc H^{d}_\mideal\big(\field[\complet{\Delta}]\big)\right)_{\ell-1}.
\end{eqnarray*}
The following lemma verifies that the latter term vanishes, and thus completes the proof. 
\endproof

\begin{lemma} \label{vanishing-socle}
Let $\Delta$ be a $(d-1)$-dimensional, connected, orientable $\field$-homology manifold with boundary. Then, for all $\ell\geq 2$,
$\left(\Soc H^{d}_\mideal\big(\field[\complet{\Delta}]\big)\right)_{-\ell}=0.$
\end{lemma}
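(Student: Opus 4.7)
The plan is to expand $y \in H^d_\mideal(\field[\complet{\Delta}])_{-\ell}$ via Gr\"abe's theorem (Theorem~\ref{Grabe-d}) as $y = \sum_\rho y_\rho$, with $\rho$ running over monomials of degree $\ell$ and $y_\rho \in \HH_\rho = \tilde{H}^{d-1-|s(\rho)|}(\lk_{\complet{\Delta}} s(\rho); \field)$, and then read off the socle condition componentwise. For each vertex $w$ and each monomial $\rho'$ of degree $\ell-1$, only the single monomial $\rho = \rho'x_w$ contributes to the $\rho'$-summand of $x_w\cdot y$, so $x_w\cdot y = 0$ is equivalent to $x_w \cdot y_\rho = 0$ for every $\rho$ with $w \in s(\rho)$. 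Two easy reductions then single out the squarefree case: if $s(\rho) \notin \complet{\Delta}$ then $\HH_\rho = 0$, and if $x_w^2$ divides some $\rho$ then Gr\"abe's theorem describes $x_w$ as the identity map $\HH_\rho = \HH_{\rho/x_w}$, forcing $y_\rho = 0$.

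So it suffices to fix a squarefree $\rho$ with $F := s(\rho) \in \complet{\Delta}$ and $|F| = \ell \geq 2$, and to exhibit a single vertex $w \in F$ for which the map
\[
\iota^* : \tilde{H}^{d-1-\ell}(\lk_{\complet{\Delta}} F; \field) \longrightarrow \tilde{H}^{d-\ell}(\lk_{\complet{\Delta}} (F\setminus\{w\}); \field)
\]
described by Gr\"abe is an isomorphism; this will force $y_\rho = 0$. The key choice is $w = v_0$ if $v_0 \in F$, and any $w \in F$ otherwise, so that $G := F \setminus \{w\}$ is a non-empty face of $\complet{\Delta}$ disjoint from $v_0$. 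Because $\Delta$ is a homology manifold with boundary, $\lk_{\complet{\Delta}} G$ is then a $\field$-homology $(d-\ell)$-sphere: either $G$ is an interior face of $\Delta$ and $\lk_{\complet{\Delta}} G = \lk_\Delta G$ is already a sphere, or $G \in \partial\Delta$ and $\lk_{\complet{\Delta}} G$ is the homology ball $\lk_\Delta G$ glued along its boundary sphere to the cone $v_0 \ast \lk_{\partial\Delta} G$, again a sphere.

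Set $L := \lk_{\complet{\Delta}} G$. A second application of Gr\"abe's isomorphism, now inside the complex $L$ for its vertex $\{w\}$, identifies $\iota^*$ with the natural map $H^{d-\ell}(L, \cost_L\{w\}) \to H^{d-\ell}(L)$ in the long exact cohomology sequence of the pair $(L, \cost_L\{w\})$. Since $L$ is a $\field$-homology sphere, its contrastar $\cost_L\{w\}$ is a $\field$-homology ball and hence $\field$-acyclic (a fact recorded in Section~\ref{sec:Prelim}). Both neighboring terms $\tilde{H}^{d-\ell-1}(\cost_L\{w\})$ and $\tilde{H}^{d-\ell}(\cost_L\{w\})$ in the pair sequence therefore vanish, so $\iota^*$ is an isomorphism and $y_\rho = 0$, completing the proof.

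The step I expect to cause the most trouble is correctly handling the singular vertex: when $v_0 \in F$, the naive choices $w \in F \setminus \{v_0\}$ leave $v_0 \in G$, and then $\lk_{\complet{\Delta}} G$ can acquire the full topology of $\partial\Delta$ (for instance when $\ell=2$ and $G = \{v_0\}$, where $\lk_{\complet{\Delta}} G = \partial\Delta$ may have non-sphere cohomology), so $\iota^*$ need not be injective for every $w \in F$. The resolution is to recognize that we only need injectivity for a single $w$; the prescription ``strip off $v_0$ first'' always produces a $G$ whose link is a sphere, which is exactly what the acyclicity argument requires.
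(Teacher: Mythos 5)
Your proof is correct, but at the decisive step it takes a genuinely different route from the paper. Both arguments make the same initial reductions (decompose $H^d_\mideal(\field[\complet{\Delta}])_{-\ell}$ via Gr\"abe's theorem, note that one injective multiplication map per monomial $\rho$ suffices, and kill the non-squarefree $\rho$ by the identity maps), but they diverge on squarefree $\rho$: the paper keeps a non-singular vertex $w\neq v_0$ in \emph{both} supports $s(\rho)\supset s(\rho/x_v)$, so that both links are homology spheres and both $\HH_\rho$, $\HH_{\rho/x_v}$ are one-dimensional, and then proves $\iota^\ast$ is an isomorphism by a commutative diagram passing through $\tilde{H}^{d-1}(\|\complet{\Delta}\|)$ --- this is exactly where connectivity and orientability are used. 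You instead strip off $v_0$ first, so that the \emph{target} face $G=s(\rho)\setminus\{w\}$ is a nonempty face of $\Delta$, identify Gr\"abe's $\iota^\ast$ with the map $H^{d-\ell}(L,\cost_L w)\to \tilde{H}^{d-\ell}(L)$ in the long exact sequence of the pair for $L=\lk_{\complet{\Delta}}G$, and conclude from acyclicity of a vertex contrastar in a homology sphere. This is a purely local argument: it never invokes connectivity or orientability, so it in fact proves the lemma for an arbitrary $\field$-homology manifold with boundary, which is a real gain in generality (the paper's hypotheses are needed elsewhere, e.g.\ for Proposition \ref{SB}, but not, on your argument, here). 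Two points deserve an explicit line in a final write-up: (i) the identification of $\iota^\ast$ with the long-exact-sequence map is a cochain-level naturality check --- relative cochains of $(\complet{\Delta},\cost_{\complet{\Delta}}F)$ are spanned by faces containing $F$, and factoring out $G$ identifies the inclusion into the span of faces containing $G$ with $C^{\bullet}(L,\cost_L w)\hookrightarrow \tilde{C}^{\bullet}(L)$; and (ii) ``$L$ is a homology sphere'' should be justified as a homology-manifold statement, not just a Mayer--Vietoris computation of its homology, e.g.\ by noting $L=\lk_{\lk_{\complet{\Delta}}u}(G\setminus\{u\})$ for any vertex $u\in G$ and that links of non-singular vertices of $\complet{\Delta}$ are homology spheres (equivalently, the completion of a homology ball is a homology sphere); both are routine and at the level of detail the paper itself allows. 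Your closing remark about the singular vertex is exactly the right diagnosis: the paper circumvents the same difficulty by retaining a non-singular vertex and paying for it with the global orientability argument, whereas your choice ``remove $v_0$ first'' makes the local sphere/ball machinery suffice.
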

\proof Recall that by Theorem \ref{Grabe-d},
\begin{equation} \label{eq:H_d}
H^d_{\mideal}(\field[\complet{\Delta}])_{-\ell}=\bigoplus_{\rho\in\M_\ell(\complet{\Delta})} \HH_{\rho}, \quad \mbox{where} \quad \HH_{\rho}=H^{d-1}_{s(\rho)}(\complet{\Delta}).
\end{equation}
Fix $\ell\geq 2$, and let $\rho\in\M_\ell(\complet{\Delta})$. Then either $\rho$ is divisible by $x_v^2$ for some vertex $v$ of $\complet{\Delta}$ (possibly $v_0$) or $\rho$ is a squarefree monomial whose support has size at least two: $s(\rho)\supseteq\{v, w\}$. In the former case, by Theorem \ref{Grabe-d}, the multiplication map $\cdot x_v: \HH_{\rho} \to \HH_{\rho/x_v}$ is the identity map, and so no non-zero element of $\HH_{\rho}$ is in the socle. In the latter case, at least one of $v,w$ is not $v_0$. Assume without loss of generality that $w\neq v_0$, and consider the map $\cdot x_v: \HH_{\rho} \to \HH_{\rho/x_v}$, which by Theorem \ref{Grabe-d} is simply $\iota^\ast: H^{d-1}_{s(\rho)}(\complet{\Delta}) \to H^{d-1}_{s(\rho/x_v)}(\complet{\Delta})$. We will show that this map is an isomorphism, and hence that no non-zero element of $\HH_{\rho}$ is in the socle in this case as well.

Our argument is similar to the one used in the proof of \cite[Theorem 2.1]{Novik-Swartz-09:Gorenstein}. Denote by $\|\complet{\Delta}\|$ the geometric realization of $\complet{\Delta}$, and by $b(\rho)$ and $b(\rho/x_v)$ the barycenters of realizations of faces $s(\rho)$ and $s(\rho/x_v)$, respectively. Consider the following commutative diagram, where the maps $f^\ast$ and $j^\ast$ are induced by inclusion:
$$
\begin{CD}
\tilde{H}^{d-1}\big(\|\complet{\Delta}\|\big) @>(j^\ast)^{-1}>> 
\tilde{H}^{d-1}\big(\|\complet{\Delta}\|, \|\complet{\Delta}\| - b(\rho/x_v)\big)
 @>f^\ast>>  \tilde{H}^{d-1}\big(\complet{\Delta}, \cost_{\complet{\Delta}}s(\rho/x_v)\big) \\
 @|  @. @A\iota^\ast AA \\
 \tilde{H}^{d-1}\big(\|\complet{\Delta}\|\big) @>(j^\ast)^{-1}>> 
\tilde{H}^{d-1}\big(\|\complet{\Delta}\|, \|\complet{\Delta}\| - b(\rho)\big) 
@>f^\ast>>  \tilde{H}^{d-1}\big(\complet{\Delta}, \cost_{\complet{\Delta}}s(\rho)\big). 
\end{CD}
$$
The two maps $f^\ast$ are isomorphisms by the usual deformation retractions. Since $w\neq v_0$ and $w\in s(\rho/x_v) \subset s(\rho)$, the links $\lk_{\complet{\Delta}} s(\rho)$ and $\lk_{\complet{\Delta}} s(\rho/x_v)$ are $\field$-homology spheres, so the four $\field$-spaces on the right and in the middle of the diagram are $1$-dimensional. Furthermore, since $\Delta$ is connected and orientable, the $\field$-spaces on the left of the diagram are $1$-dimensional and the two $j^\ast$-maps are isomorphisms, so that $(j^\ast)^{-1}$-maps are well-defined and are isomorphisms as well. This implies that $\iota^\ast$ is an isomorphism and completes the proof.
\endproof

We are now ready to prove Theorem \ref{soc-and-h''}.

\smallskip\noindent {\it Proof of Theorem \ref{soc-and-h''}: \ } 
We prove both parts simulataneously. If $d=2$, then $\complet{\Delta}$ is a circle, in which case the statement is known. So assume $d\geq 3$. Lemma \ref{mod-theta1-decompos} and Proposition \ref{SB} imply that the formula for the dimension of the socle holds for all $i\leq d-2$. Together with Theorem \ref{known-h'-h''}(4) and  Definition \ref{def-h'-h''}, this also implies that the formula for $h''_i(\complet{\Delta})$ holds for all $i\leq d-2$. Thus, it only remains to show that the theorem holds for $i=d-1$. Since by Corollary \ref{symmetry}, the $h''$-numbers of $\complet{\Delta}$ are symmetric, to complete the proof of both parts, it suffices to check that  the proposed expression for $h''_{d-1}(\complet{\Delta})$ is equal to $h''_1(\complet{\Delta})=h_1(\complet{\Delta})$.

Let $\tilde{\chi}$ denote the reduced Euler characteristic. Note that since $\tilde{\beta}_{d-1}(\complet{\Delta})=1$ and $\tilde{\beta}_{d-1}(\Delta)=0$, the proposed expression for $h''_{d-1}(\complet{\Delta})$, $h_{d-1}(\complet{\Delta})-\sum_{j=1}^{d-1}(-1)^{d-j-1}\left[(d-1)\tilde{\beta}_{j-1}(\complet{\Delta})+\tilde{\beta}_{j-1}(\Delta)\right]$, can be rewritten as
\[
h_{d-1}(\complet{\Delta})- (d-1)\left(1+(-1)^d\tilde{\chi}(\complet{\Delta})\right)-(-1)^d\tilde{\chi}(\Delta).
\]
Thus to complete the proof, we only need to verify that 
\[
h_{d-1}(\complet{\Delta})=h_{1}(\complet{\Delta})+(d-1)\left(1+(-1)^d\tilde{\chi}(\complet{\Delta})\right)+(-1)^d\tilde{\chi}(\Delta).
\]

To do so, observe that for all $i$,
\[
f_i(\complet{\Delta})=f_i(\Delta)+f_i(\st_{\complet{\Delta}} v_0)- f_i(\partial\Delta).
\]
This, together with the fact that vertex stars are contractible, implies that
\begin{equation} \label{eq:chi}
\tilde{\chi}(\partial\Delta)=\tilde{\chi}(\Delta)+\tilde{\chi}(\st_{\complet{\Delta}} v_0)-\tilde{\chi}(\complet{\Delta})=
\tilde{\chi}(\Delta)-\tilde{\chi}(\complet{\Delta}).
\end{equation}
Finally, according to \cite[Theorem 3.1]{Novik-Swartz-12},
\begin{eqnarray*}
 h_{d-1}(\complet{\Delta})&=&h_{1}(\complet{\Delta})+d\left(1+(-1)^d\tilde{\chi}(\complet{\Delta})\right)-\left(1+(-1)^{d-1}\tilde{\chi}(\partial\Delta)\right)\\
&\stackrel{\mbox{\tiny{by \eqref{eq:chi}}}}{=}& 
h_{1}(\complet{\Delta})+(d-1)+  d(-1)^d \tilde{\chi}(\complet{\Delta})+(-1)^d\left(\tilde{\chi}(\Delta)-\tilde{\chi}(\complet{\Delta})\right)\\
&=&h_{1}(\complet{\Delta})+(d-1)\left(1+(-1)^d\tilde{\chi}(\complet{\Delta})\right)+(-1)^d\tilde{\chi}(\Delta).
\end{eqnarray*}
The result follows.
\endproof

\section{Applications: $g$-theorems for manifolds with boundary}
Algebraic results obtained in the two previous sections along with Macaulay's characterization of Hilbert functions of homogeneous quotients of polynomial rings allow us to easily derive several new enumerative results on face numbers of $\field$-homology manifolds with boundary. This section is devoted to results that generalize and are similar in spirit to the $g$-theorem for simplicial polytopes. We follow the custom and define $g_i:=h_i-h_{i-1}$, $g'_i:=h'_i-h'_{i-1}$, and $g''_i:=h''_i-h''_{i-1}$.

We start by recalling that given positive integers $a$ and $i$, there is a unique way to write
\[
a=\binom{a_i}{i}+\binom{a_{i-1}}{i-1}+\cdots+\binom{a_j}{j}, \quad \mbox{where } a_i>a_{i-1}>\cdots>a_j\geq j\geq 1.
\]
Define
\[a^{\langle i\rangle}:=\binom{a_i+1}{i+1}+\binom{a_{i-1}+1}{i}+\cdots+\binom{a_j+1}{j+1} \quad\mbox{and} \quad 0^{\langle i\rangle}:=0.
\]
Macaulay's theorem \cite[Theorem II.2.2]{Stanley-96} asserts that a (possibly infinite) sequence $(b_0, b_1, \ldots)$ of  integers is the Hilbert function of a homogeneous quotient of a polynomial ring if and only if $b_0=1$ and $0\leq b_{\ell+1}\leq b_\ell^{\langle \ell\rangle}$ for all $\ell\geq 1$. A sequence that satisfies these conditions is called an {\em $M$-vector}.

Our first $g$-type result is an extension of \cite[Theorem 3.2]{Novik-Swartz-09:Gorenstein} to manifolds with boundary:
\begin{theorem} \label{g-theorem-v1}
Let $\Delta$ be a $(d-1)$-dimensional, connected, orientable $\field$-homology manifold with boundary, and let $h''_i(\complet{\Delta})=h_i(\Delta)+h_{i-1}(\partial\Delta)-\sum_{j=2}^{i}(-1)^{i-j}\left(\binom{d-1}{i}\tilde{\beta}_{j-1}(\Delta)+\binom{d-1}{i-1}\tilde{\beta}_{d-j}(\Delta)\right)$ for $i<d$ and $h''_d(\complet{\Delta})=1$. Then
\begin{enumerate}
\item $h''_i(\complet{\Delta})=h''_{d-i}(\complet{\Delta})$ for all $0\leq i \leq d$. 
\item If $d\geq 4$, then $\big(1, g''_1(\complet{\Delta}), g''_2(\complet{\Delta})\big)$ is an $M$-vector.
\item If for all vertices $v$ of $\Delta$, $\lk_{\complet{\Delta}}v$ has the WLP over $\field$, then 
$\big(1, g''_1(\complet{\Delta}), g''_2(\complet{\Delta}), \cdots, g''_{\halffloor}(\complet{\Delta})\big)$ is an $M$-vector. 
\end{enumerate}
\end{theorem}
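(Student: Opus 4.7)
The plan is to derive all three claims as formal consequences of the structural results of Sections~\ref{sec:Gorenstein} and~\ref{sec:h''-and-socles}, combined with Macaulay's theorem. The formula for $h''_i(\complet{\Delta})$ displayed in the statement is precisely the reformulation given in Remark~\ref{rem:h''-v2}, so no new computation is needed for the formula itself; only the three structural assertions require proof.

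Write $M := \overline{\field(\complet{\Delta},\Theta)}$. This is a standard graded Artinian quotient of the polynomial ring $A = \field[x_u \mid u \in V_0]$, with Hilbert function $\big(h''_0(\complet{\Delta}), \ldots, h''_d(\complet{\Delta})\big)$. Since $\Delta$ is connected and orientable, $h''_d(\complet{\Delta}) = \tilde{\beta}_{d-1}(\complet{\Delta}) = 1$, so $M$ has socle degree exactly $d$. Part~1 is then Corollary~\ref{symmetry}: by Theorem~\ref{thm-Gorenstein}, $M$ is Gorenstein, and Gorenstein Artinian graded algebras have symmetric Hilbert functions.

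For Parts~2 and~3, the strategy is to pass to the quotient $M/\omega M$ for a generic linear form $\omega$. Whenever multiplication by $\omega$ is injective on $M_{j-1}$, the short exact sequence
\[ 0 \to M(-1) \xrightarrow{\,\cdot\omega\,} M \to M/\omega M \to 0 \]
yields $\dim_\field(M/\omega M)_j = h''_j(\complet{\Delta}) - h''_{j-1}(\complet{\Delta}) = g''_j(\complet{\Delta})$. Because $M/\omega M$ is itself a standard graded quotient of $A$ by a homogeneous ideal, Macaulay's theorem asserts that its full Hilbert function is an $M$-vector, and hence every initial segment of that Hilbert function is again an $M$-vector. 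For Part~2, Corollary~\ref{WLP}(1) (which requires $d \ge 4$) supplies injectivity of $\cdot\omega$ on $M_0$ and $M_1$; combined with the trivial $\dim_\field(M/\omega M)_0 = 1$, this exhibits $(1, g''_1(\complet{\Delta}), g''_2(\complet{\Delta}))$ as the degree-$(0,1,2)$ part of the Hilbert function of $M/\omega M$, so it is an $M$-vector. For Part~3, the assumption that every link $\lk_{\complet{\Delta}} v$ has the WLP activates Corollary~\ref{WLP}(2), giving injectivity of $\cdot\omega$ on $M_i$ for all $i < \halffloor$; the same mechanism now identifies $(1, g''_1(\complet{\Delta}), \ldots, g''_{\halffloor}(\complet{\Delta}))$ with an initial segment of the Hilbert function of $M/\omega M$, and therefore it is an $M$-vector.

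The heavy lifting --- establishing the Gorenstein property of $M$, the weak Lefschetz injectivity, and the explicit closed form for $h''_i(\complet{\Delta})$ --- has already been carried out in Sections~\ref{sec:Gorenstein} and~\ref{sec:h''-and-socles}, so no genuine obstacle remains at this stage; the proof is the standard formal packaging of those three inputs via Macaulay's theorem, with the only book-keeping point being that the socle degree of $M$ equals $d$ precisely because $\Delta$ is connected and orientable.
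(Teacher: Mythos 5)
Your proposal is correct and follows essentially the same route as the paper: the formula comes from Remark \ref{rem:h''-v2}(2), Part~1 is Corollary \ref{symmetry} (via Theorem \ref{thm-Gorenstein}), and Parts~2 and~3 are obtained exactly as in the paper by using the injectivity statements of Corollary \ref{WLP} to identify $g''_i(\complet{\Delta})$ with the Hilbert function of $\overline{\field(\complet{\Delta},\Theta)}/\omega\,\overline{\field(\complet{\Delta},\Theta)}$ in the relevant degrees and then invoking Macaulay's theorem.
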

\proof
The expressions for $h''_i(\complet{\Delta})$ are from Remark \ref{rem:h''-v2}(2).  Part 1 is the content of Corollary~\ref{symmetry}. Furthermore, it follows from Corollary \ref{WLP} and Theorem \ref{soc-and-h''}/Remark \ref{rem:h''-v2}  that under our assumptions, for generic $\Theta$ and $\omega$, and for $i\leq 2$ in part 2 and  $i\leq \halffloor$ in part 3,  
$$\dim_\field \left(\overline{\field(\complet{\Delta},\Theta)}/\omega \overline{\field(\complet{\Delta},\Theta)}\right)_i=g''_i(\complet{\Delta},\Theta).$$
Together with Macaulay's theorem, this completes the proof.
\endproof

\begin{remark} Applying the same reasoning to $\field(\complet{\Delta},\Theta)/\bigoplus_{j=0}^{\ell} \big(\Soc \field(\complet{\Delta},\Theta)\big)_j$ instead of $\overline{\field(\complet{\Delta},\Theta)}$, part 3 of Theorem \ref{g-theorem-v1} can be strengthened to the statement that $$\left(1, g''_1(\complet{\Delta}),  \cdots, g''_\ell(\complet{\Delta}), g''_{\ell+1}(\complet{\Delta})+\binom{d-1}{\ell+1}\tilde{\beta}_\ell(\Delta)+\binom{d-1}{\ell}\tilde{\beta}_{d-\ell-1}(\Delta)\right)$$ is an $M$-vector for every $\ell<\halffloor$ (cf.~discussion at the bottom of page 995 in \cite{Novik-Swartz-09:Gorenstein}).
\end{remark}

Our second $g$-type result is an extension of \cite[Theorem 5.4(i)]{Murai-Nevo-14} to manifolds with boundary. To this end, in the spirit of \cite[Section 5]{Murai-Nevo-14}, for a $(d-1)$-dimensional, connected, orientable, $\field$-homology manifold with boundary, $\Delta$, and for $r\leq \lfloor d/2\rfloor$, define 
\begin{eqnarray} 
\tilde{g}_r(\complet{\Delta})&:=&g''_r(\complet{\Delta})-\left(\binom{d-1}{r-1}\tilde{\beta}_{r-1}(\Delta)+\binom{d-1}{r-2}\tilde{\beta}_{d-r}(\Delta)\right)\\
&=&g_r(\Delta)+g_{r-1}(\partial\Delta)-\sum_{j=2}^r(-1)^{r-j}\left(\binom{d}{r}\tilde{\beta}_{j-1}(\Delta)+ \binom{d}{r-1}\tilde{\beta}_{d-j}(\Delta)\right), \label{g-tilde}
\end{eqnarray}
where the last equality follows from Remark \ref{rem:h''-v2}(2).

\begin{theorem} \label{g-theorem-v2}
Let $\Delta$ be a $(d-1)$-dimensional, connected, orientable $\field$-homology manifold with boundary. 
\begin{enumerate}
\item If $d\geq 4$, then $\big(1, \tilde{g}_1(\complet{\Delta}), \tilde{g}_2(\complet{\Delta})\big)$ is an $M$-vector.
\item If for all vertices $v$ of $\Delta$, $\lk_{\complet{\Delta}}v$ has the WLP over $\field$, then 
$\big(1, \tilde{g}_1(\complet{\Delta}), \tilde{g}_2(\complet{\Delta}), \cdots, \tilde{g}_{\halffloor}(\complet{\Delta})\big)$ is an $M$-vector.
\end{enumerate}
\end{theorem}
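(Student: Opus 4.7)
The plan is to realize $\left(1, \tilde{g}_1(\complet{\Delta}), \ldots, \tilde{g}_{\lfloor d/2\rfloor}(\complet{\Delta})\right)$ as the Hilbert function of a standard graded homogeneous quotient of a polynomial ring and then invoke Macaulay's theorem, exactly as in the proof of Theorem \ref{g-theorem-v1}. Since $\tilde{g}_r = g''_r - c_r$ with $c_r := \binom{d-1}{r-1}\tilde{\beta}_{r-1}(\Delta) + \binom{d-1}{r-2}\tilde{\beta}_{d-r}(\Delta) \ge 0$, the $\tilde{g}$-sequence is pointwise dominated by the $g''$-sequence already controlled by Theorem \ref{g-theorem-v1}; however, pointwise inequality alone does not preserve the M-vector condition, so an actual algebra must be produced.

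Following the strategy of \cite[Theorem 5.4]{Murai-Nevo-14}, I would work with $\field(\complet{\Delta},\Theta)$ itself (rather than with its interior-socle quotient) modulo a generic linear form $\omega$. The injectivity of $\cdot\omega$ on $\overline{\field(\complet{\Delta},\Theta)}$ from Corollary \ref{WLP} lifts, using Theorem \ref{soc-and-h''}(1) and genericity of $\omega$, to the statement that for $r \le \lfloor d/2\rfloor$ the kernel of $\cdot\omega : \field(\complet{\Delta},\Theta)_{r-1}\to \field(\complet{\Delta},\Theta)_r$ is exactly $\bigl(\Soc \field(\complet{\Delta},\Theta)\bigr)_{r-1}$. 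A short dimension count then yields
\[
\dim_\field\bigl(\field(\complet{\Delta},\Theta)/\omega\field(\complet{\Delta},\Theta)\bigr)_r \;=\; g''_r(\complet{\Delta}) + s_r,
\]
where $s_r := \binom{d-1}{r}\tilde{\beta}_{r-1}(\Delta) + \binom{d-1}{r-1}\tilde{\beta}_{d-r}(\Delta)$ is the degree-$r$ socle dimension of Theorem \ref{soc-and-h''}(1); by Pascal's identity, $s_r + c_r = \binom{d}{r}\tilde{\beta}_{r-1}(\Delta) + \binom{d}{r-1}\tilde{\beta}_{d-r}(\Delta)$.

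The heart of the proof is then to construct, inside $B := \field(\complet{\Delta},\Theta)/\omega\field(\complet{\Delta},\Theta)$, a homogeneous ideal $J$ whose graded dimension in degree $r \le \lfloor d/2\rfloor$ equals precisely $s_r + c_r$, since the standard graded quotient $B/J$ will then have Hilbert function starting $(1, \tilde{g}_1(\complet{\Delta}), \ldots, \tilde{g}_{\lfloor d/2\rfloor}(\complet{\Delta}))$. The natural candidate for $J$ is the ideal generated by the images in $B$ of the canonical classes appearing in the Gr\"abe decomposition of $H^*_\mideal(\field[\complet{\Delta}])$ of Theorems \ref{local-cohom-1-singul} and \ref{Grabe-d}: the $\binom{d}{r}\tilde{\beta}_{r-1}(\Delta)$ summand should arise from $H^r_\mideal$, while the $\binom{d}{r-1}\tilde{\beta}_{d-r}(\Delta)$ summand should arise from the interaction of $H^d_\mideal$ with the singular vertex $v_0$, the binomial multiplicities reflecting the $d$-dimensional Gorenstein pairing on $\overline{\field(\complet{\Delta},\Theta)}$ provided by Theorem \ref{thm-Gorenstein}. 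The main obstacle will be verifying both that this submodule is $\mideal$-stable in $B$ (hence a genuine ideal) and that its predicted graded dimensions are realized without collapse; the socle-tracking arguments of Lemmas \ref{mod-theta1-decompos}, \ref{analog-of-Prop4.24}, and \ref{vanishing-socle} from Section \ref{sec:h''-and-socles} should supply the essential technical input. For part (1) only the range $r \le 2$ is required, so Corollary \ref{WLP}(1) suffices and no link WLP hypothesis is needed.
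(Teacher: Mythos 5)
Your proposal diverges from the paper's argument and, more importantly, contains a genuine gap at its central step. The paper's proof is short: using the Gorenstein symmetry $h''_i(\complet\Delta)=h''_{d-i}(\complet\Delta)$ from Corollary~\ref{symmetry} together with the socle-dimension formula of Remark~\ref{rem:h''-v2}(1), it rewrites
\[
\tilde g_r(\complet\Delta)=h''_{d-r}(\complet\Delta)-h'_{d-r+1}(\complet\Delta),
\]
and then observes that the remainder of the argument of \cite[Theorem 5.4(i)]{Murai-Nevo-14} carries over verbatim once Theorem~\ref{thm-Gorenstein} is substituted for the manifold-without-boundary Gorenstein statement. No new algebra is constructed; the Gorenstein duality of $\overline{\field(\complet\Delta,\Theta)}$ does all the work in complementary degrees.

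Your route instead works with $B:=\field(\complet\Delta,\Theta)/\omega\field(\complet\Delta,\Theta)$ and aims to cut down by an ideal $J$ with $\dim_\field J_r=s_r+c_r$. Your preliminary bookkeeping is fine: for $r\le\lfloor d/2\rfloor$ the kernel of $\cdot\omega$ on $\field(\complet\Delta,\Theta)_{r-1}$ is exactly $\bigl(\Soc\field(\complet\Delta,\Theta)\bigr)_{r-1}$ by Corollary~\ref{WLP}, so indeed $\dim_\field B_r=g''_r+s_r$, and the Pascal identity $s_r+c_r=\binom{d}{r}\tilde\beta_{r-1}(\Delta)+\binom{d}{r-1}\tilde\beta_{d-r}(\Delta)$ is correct. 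But the step that would actually prove the theorem --- exhibiting a homogeneous ideal $J\subseteq B$ with $\dim_\field J_r=s_r+c_r$ for every $r\le\lfloor d/2\rfloor$ --- is only gestured at. Proposing ``images of the Gr\"abe classes'' as generators is not a construction: Gr\"abe's Theorems~\ref{local-cohom-1-singul} and~\ref{Grabe-d} describe the $A$-module $H^\ast_\mideal(\field[\complet\Delta])$, not elements of the Artinian reduction $B$; there is no canonical lift of those cohomology classes into $B$, no reason the chosen lifts would be $\mideal$-stable, and no mechanism guaranteeing that the generated ideal does not collapse or overshoot in some degree. You acknowledge all three issues as ``the main obstacle'' and stop there, so the heart of the theorem remains unproved. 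If you wish to pursue a self-contained argument rather than cite Murai--Nevo, the cleaner path is the one the paper uses: pass to the dual degrees $d-r$ and $d-r+1$, where $h''_{d-r}-h'_{d-r+1}$ is directly realized through the surjection $\overline{\field(\complet\Delta,\Theta)}_{d-r}\twoheadrightarrow\overline{\field(\complet\Delta,\Theta)}_{d-r+1}$ and the projection $\field(\complet\Delta,\Theta)_{d-r+1}\twoheadrightarrow\overline{\field(\complet\Delta,\Theta)}_{d-r+1}$, rather than trying to guess an ideal in $B$.
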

\proof
Observe that by definition of $\tilde{g}_r(\complet{\Delta})$,
\begin{eqnarray} \nonumber
\tilde{g}_r(\complet{\Delta})&=&h''_r(\complet{\Delta})-h''_{r-1}(\complet{\Delta})-\left(\binom{d-1}{r-1}\tilde{\beta}_{r-1}(\Delta)+\binom{d-1}{r-2}\tilde{\beta}_{d-r}(\Delta)\right)\\
\nonumber
&=&h''_{d-r}(\complet{\Delta})-h''_{d-r+1}(\complet{\Delta})-\left(\binom{d-1}{d-r+1}\tilde{\beta}_{d-r}(\Delta)+\binom{d-1}{d-r}\tilde{\beta}_{d-(d-r+1)}(\Delta)\right)\\
\label{tilde-g-and-h'}
&=&h''_{d-r}(\complet{\Delta})-h'_{d-r+1}(\complet{\Delta}),
\end{eqnarray}
where the middle step is by Corollary \ref{symmetry} and the last step is by Remark \ref{rem:h''-v2}(1). The rest of the proof follows the proof of \cite[Theorem 5.4(i)]{Murai-Nevo-14}: the only change is that we rely on Theorem~\ref{thm-Gorenstein} that asserts Gorensteinness of $\overline{\field(\complet{\Delta},\Theta)}$ instead of \cite[Theorem 1.4]{Novik-Swartz-09:Gorenstein} that asserts Gorensteinness of the analogous ring associated with a manifold without boundary.
\endproof

\begin{remark} \label{nonnegativity-not-new}
Assume that for all vertices $v$ of $\Delta$, $\lk_{\complet{\Delta}}v$ has the WLP over $\field$ and that  for all {\em boundary} vertices $v$ of $\Delta$, $\lk_{\partial\Delta}v$ has the WLP over $\field$; assume also that $r\leq \lfloor (d-1)/2\rfloor$. Under these assumptions the non-negativity part of Theorem \ref{g-theorem-v2}(2) is not new: the fact that $\tilde{g}_r(\complet{\Delta})\geq 0$ follows from \cite[Theorem 1.5]{Murai-Novik-bdry} (see Theorem \ref{Murai-Novik inequalities}) along with the Poincar\'e--Lefschetz duality and the long exact sequence of $(\Delta,\partial\Delta)$.  For a detailed treatment of the case $d\geq4$ and $r=2$ see the proof of Proposition \ref{links of minimal tilde_g}.
\end{remark}

\section{Applications: K\"uhnel-type bounds}
The results of previous sections can also be used to extend known K\"uhnel-type bounds on the Betti numbers (and their sums) of manifolds without boundary to the case of manifolds with boundary. Deriving such bounds is the goal of this section.

Specifically, Theorem 5.3 in \cite{Murai-15} asserts that if $\Delta$ is a $(d-1)$-dimensional, connected, $\field$-homology manifold without boundary that has $n$ vertices, then $\binom{d+1}{2}\tilde{\beta}_1(\Delta)\leq \binom{n-d}{2}$ as long as $d\geq 4$. Furthermore, Theorem 5.1 in \cite{Murai-15} asserts that if, in addition, all vertex links of $\Delta$ have the WLP over $\field$, then $\binom{d+1}{r+1}\tilde{\beta}_r(\Delta)\leq \binom{n-d-1+r}{r+1}$ for all $r\leq\halffloor-1$. (The conjecture that for $r\leq \lfloor d/2\rfloor-1$ and for an arbitrary $(d-1)$-dimensional simplicial manifold $\Delta$ with $n$ vertices, the inequality $\binom{d+1}{r+1}\tilde{\beta}_r(\Delta)\leq \binom{n-d-1+r}{r+1}$ holds is due to K\"uhnel \cite[Conjecture 18]{Lutz-05}.) In the special case of {\em orientable} $\field$-homology manifolds without boundary the same results were proved in \cite[Theorem 5.2]{Novik-Swartz-09:Socles} and \cite[Theorem 4.3]{Novik-Swartz-09:DS}, respectively. An easy adaptation of proofs from \cite{Novik-Swartz-09:Socles,Novik-Swartz-09:DS} combined with our results from 
the previous sections leads to the following extension. We do not know if this extension also holds in the non-orientable case.

\begin{theorem} \label{thm:Kuhnel-indiv}
Let $\Delta$ be a $(d-1)$-dimensional, connected, orientable $\field$-homology manifold with boundary, and assume that $f_0(\Delta)=n$.
\begin{enumerate}
\item If $d\geq 4$, then $\binom{d}{2}\tilde{\beta}_1(\Delta)+\binom{d}{1}\tilde{\beta}_{d-2}(\Delta)\leq \binom{n-d+1}{2}$. If equality holds, then $\Delta$ is $2$-neighborly and has no interior vertices.
\item If for all vertices $v$ of $\Delta$, $\lk_{\complet{\Delta}}v$ has the WLP over $\field$, then 
\[\binom{d}{r+1}\tilde{\beta}_r(\Delta)+\binom{d}{r}\tilde{\beta}_{d-r-1}(\Delta)\leq \binom{n-d+r}{r+1} \quad \mbox{for all } r\leq\halffloor-1.
\]
If equality holds, then $\Delta$ is $(r+1)$-neighborly and has no interior faces of dimension $\leq r-1$.
\end{enumerate}
\end{theorem}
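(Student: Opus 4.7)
The plan is to sandwich the quantity $\binom{d}{r+1}\tilde{\beta}_r(\Delta)+\binom{d}{r}\tilde{\beta}_{d-r-1}(\Delta)$ between an algebraic quantity computed inside $\field(\complet{\Delta},\Theta)$ and the Macaulay upper bound, following the blueprint of \cite[Theorem~5.2]{Novik-Swartz-09:Socles} for part~(1) and \cite[Theorem~4.3]{Novik-Swartz-09:DS} for part~(2). Fix a generic l.s.o.p.\ $\Theta$ for $\field[\complet{\Delta}]$ and an additional generic linear form $\omega$, and set $S:=\field(\complet{\Delta},\Theta)/\omega\field(\complet{\Delta},\Theta)$. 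Since $\tilde{\beta}_0(\Delta)=0$, we have $h'_1(\complet{\Delta})=n+1-d$, so $\dim_\field S_1=n-d$ and Macaulay's theorem gives $\dim_\field S_{r+1}\le\binom{n-d+r}{r+1}$. The injectivity clause of Corollary~\ref{WLP}(1) (for part~(1), where $r=1$ and $d\ge 4$) or of Corollary~\ref{WLP}(2) (under the WLP hypothesis of part~(2)) asserts that $\cdot\omega$ is injective on $\overline{\field(\complet{\Delta},\Theta)}_r$; lifting this identifies $\ker(\cdot\omega\colon\field(\complet{\Delta},\Theta)_r\to\field(\complet{\Delta},\Theta)_{r+1})$ with the degree-$r$ socle, giving
\[
\dim_\field S_{r+1}=h'_{r+1}(\complet{\Delta})-h''_r(\complet{\Delta})=g''_{r+1}(\complet{\Delta})+\dim_\field(\Soc\field(\complet{\Delta},\Theta))_{r+1}.
\]

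Next, Theorem~\ref{g-theorem-v2}(1) (resp.~(2)) yields $\tilde{g}_{r+1}(\complet{\Delta})\ge 0$, which by the definition of $\tilde{g}_{r+1}$ rearranges into $g''_{r+1}(\complet{\Delta})\ge\binom{d-1}{r}\tilde{\beta}_r(\Delta)+\binom{d-1}{r-1}\tilde{\beta}_{d-r-1}(\Delta)$. Adding this to the socle formula $\dim_\field(\Soc\field(\complet{\Delta},\Theta))_{r+1}=\binom{d-1}{r+1}\tilde{\beta}_r(\Delta)+\binom{d-1}{r}\tilde{\beta}_{d-r-1}(\Delta)$ from Remark~\ref{rem:h''-v2}(1) and applying Pascal's rule $\binom{d-1}{r+1}+\binom{d-1}{r}=\binom{d}{r+1}$, $\binom{d-1}{r}+\binom{d-1}{r-1}=\binom{d}{r}$, we obtain
\[
\binom{d}{r+1}\tilde{\beta}_r(\Delta)+\binom{d}{r}\tilde{\beta}_{d-r-1}(\Delta)\le g''_{r+1}(\complet{\Delta})+\dim_\field(\Soc\field(\complet{\Delta},\Theta))_{r+1}=\dim_\field S_{r+1}\le\binom{n-d+r}{r+1},
\]
which is the asserted inequality.

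For the equality clauses, suppose equality holds. Then $\dim_\field S_{r+1}=\binom{n-d+r}{r+1}$, and iterating Macaulay's bound forces $\dim_\field S_i=\binom{n-d+i-1}{i}$ for every $1\le i\le r+1$. We argue inductively on $i$ that $\complet{\Delta}$ is $i$-neighborly: given $i$-neighborliness, both $\Delta$ and $\complet{\Delta}$ are $(i-2)$-connected, so the formulas in Theorems~\ref{known-h'-h''}(4) and~\ref{soc-and-h''}(2), combined with the Poincar\'e--Lefschetz identity $\tilde{\beta}_{i-1}(\complet{\Delta})=\tilde{\beta}_{d-i}(\Delta)$ and Pascal's rule, collapse to the clean identity
\[
\dim_\field S_{i+1}=g_{i+1}(\complet{\Delta})+\binom{d}{i+1}\tilde{\beta}_{i-1}(\Delta)+\binom{d}{i}\tilde{\beta}_{d-i}(\Delta).
\]
Since $g_{i+1}(\complet{\Delta})\le\binom{n-d+i}{i+1}$ by Kruskal--Katona (with equality iff $(i+1)$-neighborly, given $i$-neighborly), and the Betti terms are non-negative, the forced value $\binom{n-d+i}{i+1}$ for $\dim_\field S_{i+1}$ simultaneously forces $\complet{\Delta}$ to be $(i+1)$-neighborly and $\tilde\beta_{i-1}(\Delta)=\tilde\beta_{d-i}(\Delta)=0$. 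Iterating through $i=1,\ldots,r$ yields that $\complet{\Delta}$ is $(r+1)$-neighborly. Splitting an $(r+1)$-subset of vertices of $\complet\Delta$ according to whether it contains $v_0$, this is equivalent to $\Delta$ being $(r+1)$-neighborly together with every $r$-subset of $V$ lying in $\partial\Delta$, i.e., $\Delta$ having no interior face of dimension $\le r-1$.

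The main obstacle will be the bookkeeping in the equality case: the alternating sums of Betti numbers in the formulas for $h'_i$ and $h''_i$ must be carefully disentangled using the vanishing $\tilde\beta_{\le i-2}(\Delta)=\tilde\beta_{\le i-2}(\complet{\Delta})=0$ from the inductive $i$-neighborly hypothesis, and the two surviving Betti terms must then be paired via Pascal's identity to produce the clean displayed identity for $\dim_\field S_{i+1}$.
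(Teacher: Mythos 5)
Your proof of the \emph{inequality} is sound and is a genuine (if cosmetic) variant of the paper's argument: you package $h'_{r+1}(\complet{\Delta})-h''_r(\complet{\Delta})$ as $\dim_\field S_{r+1}$ via WLP-injectivity and apply Macaulay's theorem to $S$, whereas the paper applies Macaulay's theorem directly to $\field(\complet{\Delta},\Theta)$ and to $\field(\complet{\Delta},\Theta)/(\Soc\field(\complet{\Delta},\Theta))_r$ and compares the two Hilbert functions, with no reference to $\omega$ at all. Both routes give the same bound; the paper's has the advantage of handling the equality case in the same breath, as explained next.

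There is, however, a genuine gap in your treatment of the equality case. You write $\dim_\field S_{i+1}=g_{i+1}(\complet{\Delta})+B_i$, where $B_i:=\binom{d}{i+1}\tilde{\beta}_{i-1}(\Delta)+\binom{d}{i}\tilde{\beta}_{d-i}(\Delta)\ge 0$, and then claim that $g_{i+1}(\complet{\Delta})\le\binom{n-d+i}{i+1}$ ``by Kruskal--Katona,'' concluding that the forced value $\binom{n-d+i}{i+1}$ for $\dim_\field S_{i+1}$ ``simultaneously forces'' $(i+1)$-neighborliness and $B_i=0$. Two things go wrong here. First, Kruskal--Katona bounds $f$-numbers in terms of $f$-numbers and does \emph{not} yield the asserted bound on the $g$-number of an arbitrary $i$-neighborly complex; the only source of such a bound in this setting is the very identity $\dim_\field S_{i+1}=g_{i+1}(\complet{\Delta})+B_i$ together with Macaulay on $S$, which is circular for your purposes. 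Second, even granting an independent upper bound $g_{i+1}(\complet{\Delta})\le\binom{n-d+i}{i+1}$, the single equation $g_{i+1}(\complet{\Delta})+B_i=\binom{n-d+i}{i+1}$ with both summands a priori of unknown size does not force $B_i=0$; you would additionally need a \emph{lower} bound on $g_{i+1}(\complet{\Delta})$, which you do not supply. The paper avoids this entirely by encoding both halves of the problem with a single parameter: writing $h'_{r+1}(\complet{\Delta})=\binom{x+1}{r+1}$ (so $x\le n-d+r$ by Macaulay on $\field(\complet{\Delta},\Theta)$) and deducing $h''_r(\complet{\Delta})\ge\binom{x}{r}$ from Macaulay on $\field(\complet{\Delta},\Theta)/(\Soc\field(\complet{\Delta},\Theta))_r$, one gets $h'_{r+1}-h''_r\le\binom{x}{r+1}\le\binom{n-d+r}{r+1}$; equality then forces $x=n-d+r$, hence $h'_{r+1}(\complet{\Delta})=\binom{n-d+r+1}{r+1}$, which immediately gives $(r+1)$-neighborliness of $\complet{\Delta}$ in one step, with no induction and no need to argue that any Betti numbers vanish. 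The combinatorial translation (every $r$-subset of $V$ lies in $\lk v_0=\partial\Delta$, so $\Delta$ has no interior faces of dimension $\le r-1$) you do carry out correctly. You should replace your inductive equality argument with this one-step chain.
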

\proof Since the proof is very similar to that of \cite[Theorem 4.3]{Novik-Swartz-09:DS}, we omit some of the details. Fix an integer $r$:  $r=1$ for part 1 and any $r\leq\halffloor-1$ for part 2. It follows from Theorem \ref{g-theorem-v2} that  $\tilde{g}_{r+1}(\complet{\Delta})$ is nonnegative. Hence
\begin{eqnarray*}
0 &\leq &
h''_{r+1}(\complet{\Delta})-h''_{r}(\complet{\Delta})-\left(\binom{d-1}{r}\tilde{\beta}_{r}(\Delta)+\binom{d-1}{r-1}\tilde{\beta}_{d-r-1}(\Delta)\right)\\
&\stackrel{\mbox{\tiny{by Remark \ref{rem:h''-v2}(1)}}}{=}&h'_{r+1}(\complet{\Delta})-h''_{r}(\complet{\Delta})-\left(\binom{d}{r+1}\tilde{\beta}_{r}(\Delta)+\binom{d}{r}\tilde{\beta}_{d-r-1}(\Delta)\right).
\end{eqnarray*}
We conclude that  $\binom{d}{r+1}\tilde{\beta}_{r}(\Delta)+\binom{d}{r}\tilde{\beta}_{d-r-1}(\Delta) \leq h'_{r+1}(\complet{\Delta})-h''_{r}(\complet{\Delta})$. Thus, to complete the proof, it suffices to show that $h'_{r+1}(\complet{\Delta})-h''_{r}(\complet{\Delta}) \leq \binom{n-d+r}{r+1}$ and that if equality holds then $\complet{\Delta}$ is $(r+1)$-neighborly. (The latter condition implies that $\Delta$ is $(r+1)$-neighborly and that all faces of $\Delta$ of cardinality $\leq r$ are in the link of $v_0$, and hence that they are boundary faces.)

Indeed, since $f_0(\Delta)=n$, $h'_1(\complet{\Delta})=n-d+1$. Macaulay's theorem applied to $\field(\complet{\Delta},\Theta)$, then shows that $h'_{r+1}(\complet{\Delta})=\binom{x+1}{r+1}$ for some real number $x \leq n-d+r$. 
Another application of Macaulay's theorem, this time to $\field(\complet{\Delta},\Theta)/(\Soc \field(\complet{\Delta},\Theta))_r$, yields that $h'_{r+1}(\complet{\Delta})\leq (h''_r(\complet{\Delta}))^{\langle r+1 \rangle}$, and hence that $h''_r(\complet{\Delta})\geq\binom{x}{r}$. Therefore, $h'_{r+1}(\complet{\Delta})-h''_{r}(\complet{\Delta}) \leq \binom{x}{r+1} \leq \binom{n-d+r}{r+1}$, as desired. Furthermore, if $h'_{r+1}(\complet{\Delta})-h''_{r}(\complet{\Delta}) = \binom{n-d+r}{r+1}$, then $\dim_\field \field\big(\complet{\Delta},\Theta\big)_{r+1}=h'_{r+1}(\complet{\Delta})=\binom{n-d+r+1}{r+1}$, which, in turn, implies that $\complet{\Delta}$ is $(r+1)$-neighborly.
\endproof

\begin{corollary} \label{cor:Brehm-Kuhnel-type-bd}
Let $\Delta$ be a $(d-1)$-dimensional, connected, orientable $\field$-homology manifold with boundary, and assume that $f_0(\Delta)=n$. 
\begin{enumerate}
\item If $d\geq 4$, then $\tilde{\beta}_1(\Delta)\leq \binom{n-d+1}{2}/\binom{d}{2}$. In particular, if $\tilde{\beta}_1(\Delta)\neq 0$, then $n\geq 2d-1$.
\item If for all vertices $v$ of $\Delta$, $\lk_{\complet{\Delta}}v$ has the WLP over $\field$, then $\tilde{\beta}_r(\Delta)\leq \binom{n-d+r}{r+1}/\binom{d}{r+1}$ for all $r\leq\halffloor-1$. Consequently, if $\tilde{\beta}_r(\Delta)\neq 0$, then $n\geq 2d-r$. Similarly, if both $\tilde{\beta}_r(\Delta)$ and $\tilde{\beta}_{d-r-1}$ are non-vansishing, then $n\geq 2d-r+1$.
\end{enumerate}
\end{corollary}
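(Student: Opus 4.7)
The plan is to read both parts of Corollary~\ref{cor:Brehm-Kuhnel-type-bd} directly off the inequalities already established in Theorem~\ref{thm:Kuhnel-indiv}, and then to extract the vertex-count consequences via a simple monotonicity argument on binomial coefficients. There is no real obstacle here; the corollary is a packaging of Theorem~\ref{thm:Kuhnel-indiv} once one throws away one nonnegative summand.

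First, for part~1, I would invoke Theorem~\ref{thm:Kuhnel-indiv}(1), which gives
\[
\binom{d}{2}\tilde{\beta}_1(\Delta)+\binom{d}{1}\tilde{\beta}_{d-2}(\Delta)\leq \binom{n-d+1}{2}.
\]
Since $\tilde{\beta}_{d-2}(\Delta)\geq 0$, dropping that term yields the announced bound $\tilde{\beta}_1(\Delta)\leq \binom{n-d+1}{2}/\binom{d}{2}$. Analogously, under the WLP hypothesis of part~2, Theorem~\ref{thm:Kuhnel-indiv}(2) gives
\[
\binom{d}{r+1}\tilde{\beta}_r(\Delta)+\binom{d}{r}\tilde{\beta}_{d-r-1}(\Delta)\leq \binom{n-d+r}{r+1},
\]
and dropping the second summand produces the stated upper bound on $\tilde{\beta}_r(\Delta)$.

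For the vertex-count consequences, assume $\tilde{\beta}_r(\Delta)\neq 0$, so $\tilde{\beta}_r(\Delta)\geq 1$. Then the displayed inequality (or its $r=1$ version) forces $\binom{n-d+r}{r+1}\geq \binom{d}{r+1}$. Since $\binom{x}{r+1}$ is strictly increasing in $x$ for $x\geq r$, this gives $n-d+r\geq d$, that is $n\geq 2d-r$. If in addition $\tilde{\beta}_{d-r-1}(\Delta)\neq 0$, then the same inequality bounds $\binom{n-d+r}{r+1}$ below by $\binom{d}{r+1}+\binom{d}{r}$, which by Pascal's identity equals $\binom{d+1}{r+1}$. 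Hence $n-d+r\geq d+1$, i.e. $n\geq 2d-r+1$, completing the proof.

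The only micro-point to mention is that the $r=1$ case of the ``both non-vanishing'' statement in part~2 overlaps with part~1 once one assumes the WLP hypothesis; in part~1 (without WLP) the same Pascal computation still applies and yields $n\geq 2d-1$ when $\tilde{\beta}_1(\Delta)$ alone is nonzero, and $n\geq 2d$ when additionally $\tilde{\beta}_{d-2}(\Delta)\neq 0$, should one wish to record it.
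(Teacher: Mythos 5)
Your proposal is correct and matches what the paper intends: the corollary is stated without a separate proof precisely because it follows immediately from Theorem~\ref{thm:Kuhnel-indiv} by discarding the nonnegative companion term $\binom{d}{r}\tilde{\beta}_{d-r-1}(\Delta)$ (resp.\ $\binom{d}{1}\tilde{\beta}_{d-2}(\Delta)$) and then using monotonicity of $\binom{x}{r+1}$ in $x$ together with Pascal's identity for the vertex-count consequences. Your argument is exactly this derivation, so there is nothing to add.
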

The bounds on the number of vertices in the above corollary are similar in spirit to the bounds established by Brehm and K\"uhnel \cite[Theorem B]{Brehm-Kuhnel-87} on the number of vertices that an $(r-1)$-connected, but not $r$-connected closed PL manifold must have. 

\begin{example} K\"uhnel \cite{Kuhnel-86} (see also \cite{Kuhnel-Lassmann-96}) constructed for every $d\geq 3$, a $(d-1)$-dimensional handle, orientable or not depending on the parity of $d$, with exactly $2d-1$ vertices. (For instance, when $d=3$, this gives a unique $5$-vertex triangulation of the M\"obius band.) His construction thus provides a family of connected, orientable over $\Z/2\Z$ manifolds with boundary that have non-vanishing $\tilde{\beta}_1$ and achieve equalities in both statements of Corollary \ref{cor:Brehm-Kuhnel-type-bd}(1). 
\end{example}

We now turn to K\"uhnel-type bounds on certain weighted sums of Betti numbers. It was conjectured by K\"uhnel \cite[Conjecture B]{Kuhnel-95} and proved in \cite[Theorem 4.4]{Novik-Swartz-09:Socles} (see also \cite[Theorem 7.6]{Novik-98}) that if $\Lambda$ is a $2k$-dimensional, orientable $\field$-homology manifold without boundary, then $(-1)^k\big(\tilde{\chi}(\Lambda)-1\big)\leq\binom{f_0(\Lambda)-k-2}{k+1}/\binom{2k+1}{k+1}$. In fact, the proof showed that the same upper bound applies to $\tilde{\beta}_k(\Lambda)+\tilde{\beta}_{k-1}(\Lambda)+2\sum_{i=0}^{k-2}\tilde{\beta}_i(\Lambda)$. The methods of \cite{Novik-98,Novik-Swartz-09:Socles} combined with our results from Sections \ref{sec:Gorenstein} and \ref{sec:h''-and-socles} lead to the following extension of this result to manifolds with boundary.

\begin{theorem} \label{with-boundary-weighted-Betti}
Let $\Delta$ be a connected, orientable, $\field$-homology manifold with boundary. If $\Delta$ is $2k$-dimensional and has $n$ vertices, then 
\begin{equation}  \label{eq:weighted-Betti}
\tilde{\beta}_k(\Delta)+\sum_{i=2}^k\frac{\binom{n-k-1}{k+1}}{{\binom{2k+1}{k+1}}{\binom{n-2k-1+i }{i}}} \cdot \left(\binom{2k}{i}\tilde{\beta}_{i-1}(\Delta) +\binom{2k}{i-1}\tilde{\beta}_{2k+1-i}(\Delta)\right)\leq \frac{\binom{n-k-1}{k+1}}{\binom{2k+1}{k+1}}.
\end{equation}
Equality holds if and only if $\Delta$ is $(k+1)$-neighborly and has no interior faces of dimension $\leq k-1$.
\end{theorem}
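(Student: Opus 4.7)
The plan is to adapt \cite[Theorem 4.4]{Novik-Swartz-09:Socles} to manifolds with boundary by substituting Theorem~\ref{thm-Gorenstein} for the closed-manifold Gorensteinness theorem. Set $d:=2k+1$, let $B:=\field(\complet{\Delta},\Theta)$ for a generic l.s.o.p.~$\Theta$, and write $M_i:=\binom{n-2k+i-1}{i}$; Macaulay's theorem gives $h'_i(\complet{\Delta})\leq M_i$ because $h'_1(\complet{\Delta})=n-2k$. Remark~\ref{rem:h''-v2}(1) computes $\dim_\field(\Soc B)_i=\binom{2k}{i}\tilde{\beta}_{i-1}(\Delta)+\binom{2k}{i-1}\tilde{\beta}_{2k+1-i}(\Delta)$ for $1\leq i<d$, which at $i=k+1$ collapses (via Pascal) to $\binom{2k+1}{k+1}\tilde{\beta}_k(\Delta)$. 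Using $M_{k+1}-M_k=\binom{n-k-1}{k+1}$, the desired inequality \eqref{eq:weighted-Betti} is equivalent to
\[
\sum_{i=2}^{k}\frac{\dim_\field(\Soc B)_i}{M_i}+\frac{\dim_\field(\Soc B)_{k+1}}{M_{k+1}-M_k}\leq 1.
\]

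For each $2\leq\ell\leq k+1$ I would form the standard graded quotient $R_{\ell-1}:=B\big/\bigoplus_{j=2}^{\ell-1}(\Soc B)_j$, whose Hilbert function equals $h''_i(\complet{\Delta})$ in degrees $2,\ldots,\ell-1$ and $h'_i(\complet{\Delta})$ elsewhere. A standard consequence of Macaulay's theorem is that for any M-sequence $(h_0,h_1,\ldots)$ with $h_1=m$, the ratios $h_i/\binom{m+i-1}{i}$ are non-increasing in $i$: if $h_i=\binom{a_i}{i}+\binom{a_{i-1}}{i-1}+\cdots$ is the canonical representation, then $h_i^{\langle i\rangle}/h_i$ is a weighted average of $(a_{p}+1)/(p+1)$ whose maximum term $(a_i+1)/(i+1)$ is at most $(m+i)/(i+1)=M_{i+1}/M_i$. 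Applied to $R_{\ell-1}$ between degrees $\ell-1$ and $\ell$, this yields $h'_\ell(\complet{\Delta})/M_\ell\leq h''_{\ell-1}(\complet{\Delta})/M_{\ell-1}$ for $2\leq\ell\leq k+1$ (with the convention $h''_1:=h'_1=n-2k$, valid because $\tilde{\beta}_0(\Delta)=0$ implies $\dim_\field(\Soc B)_1=0$). Substituting $h'_\ell=h''_\ell+\dim_\field(\Soc B)_\ell$ for $\ell\leq k$, and invoking the Gorenstein symmetry $h''_{k+1}(\complet{\Delta})=h''_k(\complet{\Delta})$ from Corollary~\ref{symmetry} (hence $h'_{k+1}=h''_k+\dim_\field(\Soc B)_{k+1}$) at $\ell=k+1$, the ratio inequalities become
\[
\frac{\dim_\field(\Soc B)_\ell}{M_\ell}\leq\frac{h''_{\ell-1}}{M_{\ell-1}}-\frac{h''_\ell}{M_\ell}\quad(2\leq\ell\leq k),\qquad\frac{\dim_\field(\Soc B)_{k+1}}{M_{k+1}-M_k}\leq\frac{h''_k}{M_k},
\]
and summing telescopes the $h''_k/M_k$ terms, leaving $\sum_{i=2}^{k+1}\cdots\leq h''_1/M_1=1$, which is the reformulated inequality.

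For the equality statement, equality throughout forces every Macaulay-ratio inequality to be sharp, which in turn forces $h'_i(\complet{\Delta})=M_i$ for $0\leq i\leq k+1$; this is equivalent to $\complet{\Delta}$ being $(k+1)$-neighborly, which, given the cone structure $\complet{\Delta}=\Delta\cup(v_0\ast\partial\Delta)$, translates precisely to $\Delta$ being $(k+1)$-neighborly and having no interior face of dimension $\leq k-1$. The main obstacle is the converse direction of this equality characterization: to realize equality, the stated combinatorial hypotheses must also enforce $\dim_\field(\Soc B)_i=0$ for $2\leq i\leq k$, i.e., the vanishing of the intermediate Betti numbers of $\Delta$ in degrees $1,\ldots,k-1$ and $k+1,\ldots,2k-1$. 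This should follow from a Kalai-type connectivity consequence of $(k+1)$-neighborliness in a $2k$-dimensional orientable manifold with boundary, combined with Poincar\'e--Lefschetz duality applied to the pair $(\Delta,\partial\Delta)$.
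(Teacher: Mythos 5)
Your derivation of the inequality itself follows essentially the paper's own route: the paper iterates $h'_{i+1}(\complet{\Delta})\le \frac{N_{i+1}}{N_i}\bigl(h'_i(\complet{\Delta})-\dim_\field(\Soc \field(\complet{\Delta},\Theta))_i\bigr)$ and then uses the identity $h'_{k+1}-h'_k=\dim_\field(\Soc)_{k+1}-\dim_\field(\Soc)_k$ coming from Corollary~\ref{symmetry}; your telescoping of the ratios $h''_i/M_i$ is the same computation in different packaging, and your reformulation of \eqref{eq:weighted-Betti} and the socle bookkeeping via Remark~\ref{rem:h''-v2}(1) are correct. However, your justification of the key ratio lemma (that an $M$-sequence with $h_1=m$ satisfies $h_{i+1}/\binom{m+i}{i+1}\le h_i/\binom{m+i-1}{i}$) is wrong as stated: in the canonical representation the maximum of the values $(a_p+1)/(p+1)$ need not occur at $p=i$, and individual values can even exceed $(m+i)/(i+1)$. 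For example, with $m=10$, $i=2$, $a=\binom{9}{2}+\binom{8}{1}=44\le\binom{11}{2}$, the $p=1$ value is $9/2$, which is larger than both $(a_2+1)/3=10/3$ and the target $(m+i)/(i+1)=4$; the inequality $a^{\langle 2\rangle}=156\le 4\cdot 44$ still holds, but not for the reason you give. The lemma is true and is exactly the step the paper takes (deferring to the argument of \cite[Theorem 4.4]{Novik-Swartz-09:Socles}), but it requires a genuine proof rather than the weighted-average shortcut.

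The substantive gap is the equality statement, which you do not complete. For the forward direction you need to know when $a^{\langle i\rangle}\le\frac{M_{i+1}}{M_i}a$ is sharp (only at $a=0$ or $a=M_i$) in order to pass from ``all ratio inequalities are equalities'' to $h''_i=M_i$ and hence $h'_j(\complet{\Delta})=M_j$ for $j\le k+1$; you assert this without proof. For the converse, the obstacle you flag is illusory, and no Kalai-type connectivity result or extra Poincar\'e--Lefschetz argument is needed: if $\Delta$ is $(k+1)$-neighborly with no interior faces of dimension $\le k-1$, then $\complet{\Delta}$ is $(k+1)$-neighborly, so $I_{\complet{\Delta}}$ has no elements in degrees $\le k+1$ and $h'_j(\complet{\Delta})=M_j$ for all $j\le k+1$. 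Feeding this back into the inequalities you already established, $M_{j+1}=h'_{j+1}\le\frac{M_{j+1}}{M_j}\bigl(M_j-\dim_\field(\Soc \field(\complet{\Delta},\Theta))_j\bigr)$ forces $\dim_\field(\Soc)_j=0$ for $2\le j\le k$ (so the vanishing of the intermediate Betti-number combinations is an automatic consequence, not an extra hypothesis to be verified topologically), and then the symmetry identity $h'_{k+1}-h'_k=\dim_\field(\Soc)_{k+1}-\dim_\field(\Soc)_k$ from Corollary~\ref{symmetry} yields $\dim_\field(\Soc)_{k+1}=M_{k+1}-M_k$, which is precisely equality in \eqref{eq:weighted-Betti}. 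As written, your proposal leaves both halves of the equality characterization unproved.
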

Examples that achieve equality include $(k+1)$-neighborly triangulations of closed manifolds of dimension $2k$ with one vertex removed. Before proving Theorem \ref{with-boundary-weighted-Betti} we discuss some of its consequences.

\begin{corollary} \label{with-boundary-middle-Betti}
Let $\Delta$ be a connected, orientable, $\field$-homology manifold with boundary. Assume $\Delta$ is $2k$-dimensional and has $n$ vertices. Then 
\begin{enumerate}
\item $\tilde{\beta}_k(\Delta)\leq \frac{\binom{n-k-1}{k+1}}{\binom{2k+1}{k+1}}$. In particular, if $\tilde{\beta}_k(\Delta)\neq 0$, then $n\geq 3k+2$.
\item If $n\geq 3k+2$, then $\tilde{\beta}_k(\Delta)+\sum_{i=2}^{k-1}\tilde{\beta}_{i-1}(\Delta) \leq \frac{\binom{n-k-1}{k+1}}{\binom{2k+1}{k+1}}$.
\item If $n\geq 4k+2$, then $\sum_{i=2}^{k+1} \tilde{\beta}_{i-1}(\Delta)\leq \frac{\binom{n-k-1}{k+1}}{\binom{2k+1}{k+1}}$.
\end{enumerate}
\end{corollary}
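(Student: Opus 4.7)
The plan is to derive all three inequalities directly from Theorem~\ref{with-boundary-weighted-Betti} by selectively discarding non-negative Betti-number terms on its left-hand side, once we verify that the weights on the terms we keep are at least one. Denote the coefficient of $\tilde{\beta}_{i-1}(\Delta)$ in \eqref{eq:weighted-Betti} by
\[
C_i \;=\; \frac{\binom{n-k-1}{k+1}\binom{2k}{i}}{\binom{2k+1}{k+1}\binom{n-2k-1+i}{i}}.
\]

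For part 1, every summand on the LHS of \eqref{eq:weighted-Betti} is non-negative, so keeping only the $\tilde{\beta}_k(\Delta)$ term yields the stated bound. The ``in particular'' statement is then immediate: $\tilde{\beta}_k(\Delta)\ge 1$ forces $\binom{n-k-1}{k+1}\ge\binom{2k+1}{k+1}$, hence $n-k-1\ge 2k+1$.

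For parts 2 and 3, the strategy is to verify $C_i\ge 1$ for the relevant range of $i$ and then to discard the non-negative $\tilde{\beta}_{2k+1-i}(\Delta)$-terms (and, for part 2, also the $i=k$ summand from the sum). Substituting $m:=n-2k-1$ rewrites $C_i$ as a quotient of polynomials in $m$ of degrees $k+1$ and $i\le k+1$ respectively; a short check shows $C_i$ is non-decreasing in $m$, so it suffices to verify $C_i\ge 1$ at the threshold value of $n$.

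At the threshold $n=3k+2$ (i.e.\ $m=k+1$) the ratio telescopes to $C_i=\binom{2k}{i}/\binom{k+1+i}{i}$, which is $\ge 1$ precisely when $2k\ge k+1+i$, i.e.\ $i\le k-1$ (with equality at $i=k-1$); this covers exactly the range needed for part 2. At the threshold $n=4k+2$ (i.e.\ $m=2k+1$) the cases $i\le k-1$ follow a fortiori from the monotonicity in $m$, and the remaining case $i=k$ is a one-line binomial identity yielding $C_k=\tfrac{2k+1}{k+1}\cdot\tfrac{k+1}{2k+1}=1$; this covers the range needed for part 3. The main obstacle is purely bookkeeping: verifying the threshold inequalities for $C_i$ and matching the retained indices to the sums appearing in each part; the underlying idea (``drop non-negative terms whose coefficient is at least one'') is completely routine.
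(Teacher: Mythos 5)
Your proposal is correct and follows essentially the same route as the paper, which derives the corollary from Theorem \ref{with-boundary-weighted-Betti} by checking (as a "routine computation with binomial coefficients") that the coefficient of $\tilde{\beta}_{i-1}$ in \eqref{eq:weighted-Betti} is at least $1$ for all $i\leq k+1$ when $n\geq 4k+2$, and for all $i\leq k+1$ except possibly $i=k$ when $n\geq 3k+2$, then dropping the remaining non-negative terms. Your threshold evaluations (including $C_{k-1}=1$ at $n=3k+2$ and $C_k=1$ at $n=4k+2$) and the monotonicity in $m=n-2k-1$ correctly supply the details the paper leaves implicit.
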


To derive parts 2 and 3 of Corollary \ref{with-boundary-middle-Betti} from Theorem \ref{with-boundary-weighted-Betti}, use  routine computations with binomial coefficients to show that if $n\geq 4k+2$, then the coefficient of $\tilde{\beta}_{i-1}$ in \eqref{eq:weighted-Betti} is at least $1$ for all $i\leq k+1$, while if $n\geq 3k+2$, then such a coefficient is $\geq 1$ for all $i\leq k+1$ except possibly for $i=k$. The proof of Theorem \ref{with-boundary-weighted-Betti} is very similar to that of \cite[Theorem 4.4]{Novik-Swartz-09:Socles}, and so we only sketch the main details.

\smallskip\noindent {\it Proof of Theorem \ref{with-boundary-weighted-Betti}} (Sketch): \ 
Let $N_p:=\binom{f_0(\complet{\Delta})-(2k+1)+p-1}{p}=\binom{n-2k-1+p}{p}$. In particular, $N_{k+1}-N_k=\binom{n-k-1}{k+1}$.

Applying Macaulay's theorem to $\field(\complet{\Delta},\Theta)/(\Soc \field(\complet{\Delta},\Theta))_i$, yields that 
\[
h'_{i+1}(\complet{\Delta})\leq (h''_i(\complet{\Delta}))^{\langle i+1 \rangle} \leq \frac{N_{i+1}}{N_i}h''_i(\complet{\Delta}) =
\frac{N_{i+1}}{N_i} \left(h'_i(\complet{\Delta})-\dim_\field \big(\Soc  \field(\complet{\Delta},\Theta)\big)_i\right)
\quad \mbox{for all }i\leq 2k.
\]
Iterating this process (see the proof of  \cite[Theorem 4.4]{Novik-Swartz-09:Socles} for more details), we obtain that
\begin{eqnarray}  \label{h'-h'}
h'_{k+1}(\complet{\Delta})-h'_{k}(\complet{\Delta})&\leq&\\
\nonumber
(N_{k+1}-N_k)&-&
\left[\frac{N_{k+1}}{N_k} \dim_\field \big(\Soc  \field(\complet{\Delta},\Theta)\big)_k+
\frac{N_{k+1}-N_k}{N_k}\sum_{i=2}^{k-1} \frac{N_k}{N_i}\dim_\field \big(\Soc \field(\complet{\Delta},\Theta)\big)_i
\right].
\end{eqnarray}
On the other hand, since 
$h'_i(\complet{\Delta})=h''_i(\complet{\Delta})+\dim_\field \big(\Soc  \field(\complet{\Delta},\Theta)\big)_i$ and since $h''_{k+1}(\complet{\Delta})=h''_{k}(\complet{\Delta})$ by Corollary \ref{symmetry}, it follows that
\begin{equation} \label{Soc-Soc}
h'_{k+1}(\complet{\Delta})-h'_{k}(\complet{\Delta})=\dim_\field \big(\Soc \field(\complet{\Delta},\Theta)\big)_{k+1}-\dim_\field \big(\Soc \field(\complet{\Delta},\Theta)\big)_k.
\end{equation}
Combining equations \eqref{h'-h'} and \eqref{Soc-Soc}, we conclude that
\[
\dim_\field \big(\Soc \field(\complet{\Delta},\Theta)\big)_{k+1}+\frac{N_{k+1}-N_k}{N_k}\sum_{i=2}^{k} \frac{N_k}{N_i}\dim_\field \big(\Soc \field(\complet{\Delta},\Theta)\big)_i \leq N_{k+1}-N_k.
\]
Substituting expressions for the dimensions of graded components of the socle from Remark \ref{rem:h''-v2}(1) and, in particular, noting that 
$\dim_\field \big(\Soc \field(\complet{\Delta},\Theta)\big)_{k+1}=\binom{2k+1}{k}\tilde{\beta}_k(\Delta)$, yields the inequality. The treatment of equality case is almost identical to that in \cite[Theorem 4.4]{Novik-Swartz-09:Socles} and is omitted.
\endproof

\section{Equality}
  In this section we examine the combinatorial and topological consequences of some of the known inequalities for $f$-vectors of homology manifolds with boundary when they are sharp.  This includes a discussion of a connection between three lower bound theorems for manifolds,  PL-handle decompositions, and surgery. Along the way we propose several problems.  
  
	The right-hand side of Theorem \ref{known-h'-h''}(3) makes sense for any simplicial complex $\Delta.$ So we define
$$\bar{h}''_i(\Delta):= h_i(\Delta)-\binom{d}{i}\sum_{j=1}^{i}(-1)^{i-j}\tilde{\beta}_{j-1}(\Delta) \qquad \forall ~0\leq i\leq d-1.$$
It turns out that for homology manifolds with boundary, or more generally Buchsbaum complexes, $\bar{h}''_i \ge 0$ \cite[Section 3]{Novik-Swartz-09:Socles}. In fact, $\bar{h}''$-numbers of Buchsbaum complexes have an algebraic interpretation, see \cite[Theorem 1.2]{Murai-Novik-Yoshida}.  Murai and Nevo determined the combinatorial implications of $\bar{h}''_i=0.$  To state this we recall that a homology manifold with  boundary is {\em $i$-stacked} if it contains no interior faces of codimension $i+1$ or more.  A homology manifold without boundary is {\em $i$-stacked} if it is the boundary of an $i$-stacked homology manifold with boundary. As is customary, for both homology manifolds with or without boundary we will generally shorten $1$-stacked to just {\em stacked.}

\begin{theorem} \label{h''=0} {\rm \cite[Theorem 3.1]{Murai-Nevo-14}}
Let $\Delta$ be a $(d-1)$-dimensional homology manifold with boundary, $1 \le i \le d-1$, and $d \ge4.$   Then $\bar{h}_i''(\Delta)=0$ if and only if $\Delta$ is $(i-1)$-stacked.
\end{theorem}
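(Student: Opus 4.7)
The plan is to combine an algebraic reformulation of $\bar h''_i$ with the completion machinery developed in Sections~\ref{sec:Gorenstein} and~\ref{sec:h''-and-socles}. By Schenzel's formula (Theorem~\ref{known-h'-h''}(2)), the defining sum rearranges as
$$\bar h''_i(\Delta)=h'_i(\Delta)-\binom{d}{i}\tilde\beta_{i-1}(\Delta),$$
and for Buchsbaum complexes this quantity is nonnegative; it measures the codimension inside $\field(\Delta,\Theta)_i$ of a distinguished subspace arising from $H^i_\mideal(\field[\Delta])$. So $\bar h''_i(\Delta)=0$ exactly when that subspace fills all of $\field(\Delta,\Theta)_i$, and I would treat the two implications separately.

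For the direction ($\Leftarrow$), my starting point is that the ideal $J\subset\field[\Delta]$ generated by monomials $x_F$ with $F$ an interior face of $\Delta$ is concentrated in degrees at least $d-i+1$ under the $(i-1)$-stacked hypothesis, since then every face of dimension at most $d-1-i$ is forced to lie in $\partial\Delta$ (and in particular $\Delta$ has no interior vertices). Feeding the short exact sequence
$$0\longrightarrow J\longrightarrow \field[\Delta]\longrightarrow \field[\partial\Delta]\longrightarrow 0$$
through a generic Artinian reduction that serves both rings, I would compare degree-$i$ components of the resulting Hilbert functions: for $i\le d/2$ the degree-$i$ part of $J$ vanishes outright, while for $i>d/2$ the symmetry furnished by Corollary~\ref{symmetry} applied to $\hat\Delta$, together with Remark~\ref{rem:h''-v2}, reduces matters to the previous range. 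Schenzel's formula applied to $\Delta$ (as a manifold with boundary) and to $\partial\Delta$ (as a closed manifold), together with Poincar\'e--Lefschetz duality to match reduced Betti numbers across the sequence, should then yield the identity $\bar h''_i(\Delta)=0$.

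For the harder direction ($\Rightarrow$), I would pass to the completion $\hat\Delta$. The hypothesis $\bar h''_i(\Delta)=0$ translates, via the socle-dimension formula in Remark~\ref{rem:h''-v2}(1), into a sharp equality among graded components of $\Soc\field(\hat\Delta,\Theta)$. Combined with the Gorenstein property of $\overline{\field(\hat\Delta,\Theta)}$ (Theorem~\ref{thm-Gorenstein}) and the symmetry of $h''(\hat\Delta)$ (Corollary~\ref{symmetry}), tight equalities in neighboring degrees propagate. I would then translate these back, via Gr\"abe's description of the $A$-module structure on local cohomology (Theorems~\ref{local-cohom-1-singul} and~\ref{Grabe-d}), into vanishing statements about multiplication-by-$x_v$ maps for interior vertices $v$; these in turn force every interior face of $\Delta$ to have dimension at least $d-i$. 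The main obstacle will be this last translation: pinning down precisely which graded pieces of $\field(\hat\Delta,\Theta)$ witness the existence of interior faces of a given dimension, and showing that the forced equalities on those pieces are equivalent to the absence of such faces. Carrying this out without a global WLP hypothesis requires an inductive argument on the codimension of the interior face one is trying to exclude, leaning on the $d\ge 4$ case of Corollary~\ref{WLP}(1) for the extremal degrees and on an explicit analysis of the Gorenstein pairing on $\overline{\field(\hat\Delta,\Theta)}$ in the intermediate range.
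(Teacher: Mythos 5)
You are proving a statement the paper itself does not prove: Theorem \ref{h''=0} is imported verbatim from \cite[Theorem 3.1]{Murai-Nevo-14}, so there is no in-paper argument to compare with, and your plan has to stand on its own. As it stands it has a structural gap: a mismatch of hypotheses and of invariants. The theorem is stated for an arbitrary $(d-1)$-dimensional homology manifold with boundary, with no connectedness or orientability assumption, and it concerns $\bar h''_i(\Delta)$, computed from $\field(\Delta,\Theta)$ and the Betti numbers of $\Delta$ itself. Your plan runs almost entirely through the completion machinery --- Theorem \ref{thm-Gorenstein}, Corollary \ref{symmetry}, Corollary \ref{WLP}, Remark \ref{rem:h''-v2} --- all of which require $\Delta$ connected \emph{and} orientable, and all of which speak about the different quantity $h''_i(\complet{\Delta})$, which involves $h_{i-1}(\partial\Delta)$ and the dual Betti numbers $\tilde\beta_{d-j}(\Delta)$. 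You never exhibit an identity linking $\bar h''_i(\Delta)=0$ to equalities among the graded pieces of $\Soc\,\field(\complet{\Delta},\Theta)$, and no such link is automatic. Relatedly, $\bar h''$ of a manifold with boundary has no symmetry: a stacked $(d-1)$-ball has $\bar h''_1>0=\bar h''_{d-1}$, so your appeal to Corollary \ref{symmetry} to reduce the range $i>d/2$ to $i\le d/2$ in the ``if'' direction cannot work as written --- the symmetry that exists is for $h''(\complet{\Delta})$, not for $\bar h''(\Delta)$. Since the result is true without orientability, a correct proof must work directly with $\field[\Delta]$ and the relative data of $(\Delta,\partial\Delta)$ (this is the spirit of the algebraic interpretation of $\bar h''$ referenced at \cite[Theorem 1.2]{Murai-Novik-Yoshida}), not through the orientable completion.

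Both directions are also incomplete where the real work lies. In the ``if'' direction, the sequence $0\to J\to\field[\Delta]\to\field[\partial\Delta]\to 0$ does not stay exact after killing $\Theta$, and even where $J_i=0$ gives $\field(\Delta,\Theta)_i\cong\big(\field[\partial\Delta]/\Theta\field[\partial\Delta]\big)_i$, the right-hand side is the Artinian reduction of the $(d-2)$-dimensional closed manifold $\partial\Delta$ further cut by one extra generic linear form, whose rank in degree $i$ you cannot evaluate without Lefschetz-type input that the theorem does not assume; ``should then yield the identity'' is not an argument, and one still has to match the answer with $\binom{d}{i}\tilde\beta_{i-1}(\Delta)$. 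In the ``only if'' direction you yourself flag the decisive step --- converting the single numerical equality $\bar h''_i(\Delta)=0$ into the nonexistence of interior faces of dimension at most $d-1-i$ --- as an unresolved obstacle; that conversion \emph{is} the content of the theorem, so what you have is a plausible opening reformulation (the Schenzel/socle identity $\bar h''_i=h'_i-\binom{d}{i}\tilde\beta_{i-1}$ is correct and is the right starting point) followed by a plan rather than a proof.
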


 Murai and Nevo further noted that with the same hypotheses, $\bar{h}''_i=0$ also implied that $\tilde{\beta}_j = 0$ for all $j \ge i$  \cite[Corollary 3.2]{Murai-Nevo-14}.   When $\Delta$ is a PL-manifold with boundary the above combinatorial restriction has an even stronger topological implication in terms of the PL-handle decomposition of $\|\Delta\|.$  In order to describe this we review handle decompositions of PL-manifolds.  We refer the reader to Rourke and Sanderson \cite{Rourke-Sanderson} for definitions and results concerning PL-manifolds.

Let $B$ be a $(d-1)$-dimensional PL-ball decomposed as a product $B = B^s \times B^t,$ where $B^s$ and $B^t$ are PL-balls of dimensions $s$ and $t$ respectively.  Hence, 
$$\partial B = (\partial B^s \times B^t) \displaystyle\bigcup_{\partial B^s  \times \partial B^t} (B^s \times \partial B^t).$$
\noindent   Now let $X$ be a $(d-1)$-dimensional PL-manifold with boundary.  We say that $X'$ is obtained from $X$ by \emph{adding a PL-handle of index $s$} if 
$ X'$ is the union of $X$ and $B$ and, in addition, the intersection of $X$ and $B$ is contained in the boundary of $X$ and equals  $\partial B^s \times B^t.$
   For instance, adding a disjoint ball to $X$ is adding a PL-handle of index $0.$   A \emph{PL-handle decomposition of $X$ } is a sequence of $(d-1)$-dimensional PL-manifolds
$$X_1 \subseteq X_2 \subseteq \cdots \subseteq  X_r = X$$
such that $X_1$ is a PL-ball and for $1 \le j \le r-1$ each $X_{j+1}$ is obtained from $X_j$ by adding a PL-handle.

The following result first appeared as a remark in Section 6 of \cite{Swartz-14}. We include it here for completeness.
\begin{theorem} \label{h''=0 -> handle decomposition}
Suppose $\Delta$ is a $(d-1)$-dimensional PL-manifold with boundary, $d \ge4,$  and $\bar{h}_i''(\Delta)=0$ for some $1 \le i \le d-1$. Then $\|\Delta\|$ has a PL-handle decomposition using handles of index less than $i$.
\end{theorem}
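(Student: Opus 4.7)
The plan is to translate the algebraic hypothesis into combinatorics via Theorem~\ref{h''=0}, and then to extract the handle decomposition from the dual block structure of the PL triangulation $\Delta$. By Theorem~\ref{h''=0}, $\bar h''_i(\Delta)=0$ forces $\Delta$ to be $(i-1)$-stacked, which unpacks to the condition that every interior face of $\Delta$ has dimension at least $d-i$ (equivalently, codimension at most $i-1$).

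I would then pass to the first barycentric subdivision $\sd\Delta$. For each interior $k$-face $F$ of $\Delta$, form the dual block
\[
F^{\ast}:=\{b_{G_0}b_{G_1}\cdots b_{G_r}\mid F=G_0\subsetneq G_1\subsetneq\cdots\subsetneq G_r\in\Delta\},
\]
where $b_G$ denotes the barycenter of $G\in\Delta$. Since $\Delta$ is a PL-manifold and $F$ is interior, $\lk_\Delta F$ is a PL $(d-2-k)$-sphere, so $F^{\ast}=b_F\ast\sd(\lk_\Delta F)$ is a PL $(d-1-k)$-ball contained in the interior of $\|\Delta\|$. A regular neighborhood of $F^{\ast}$ in $\|\Delta\|$ has the product form $F^{\ast}\times F\cong B^{d-1-k}\times B^{k}$: this is a $(d-1)$-ball attached to the portion of $\|\Delta\|$ previously built along $\partial F^{\ast}\times F\cong S^{d-2-k}\times B^{k}$, which is precisely the attaching data of a PL-handle of index $d-1-k$.

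The handle decomposition is then constructed by attaching the handles $F^{\ast}\times F$ in the order of decreasing $\dim F$ (equivalently, increasing handle index), starting from some facet of $\Delta$ as the initial PL-ball $X_1$. Since the interiors of the dual blocks of interior faces together with a collar of $\partial\Delta$ cover $\|\Delta\|$, and this collar is absorbed into the handles meeting the boundary, the process terminates with $X_r=\|\Delta\|$. By the $(i-1)$-stackedness hypothesis $\dim F\ge d-i$, so every handle has index $d-1-\dim F\le i-1<i$, giving the required bound. The main subtlety to address is confirming, inductively as handles are added in order of decreasing $\dim F$, that the attaching region $\partial F^{\ast}\times F$ of each new handle lies as a properly PL-embedded copy of $S^{d-2-k}\times B^{k}$ in the current boundary; this is a standard consequence of dual-block regular neighborhood theory in the sense of Rourke--Sanderson, and the PL-manifold hypothesis on $\Delta$ is essential here so that the links of interior faces are genuine PL-spheres rather than merely $\field$-homology spheres.
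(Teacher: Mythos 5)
Your proposal is correct and follows essentially the same route as the paper: both convert $\bar h''_i(\Delta)=0$ into $(i-1)$-stackedness via Theorem \ref{h''=0}, and then read off a handle decomposition from the dual-cell structure of the triangulation (the paper phrases this as the stars $\st_{\Delta''} v_F$ of barycenters of interior faces in the second barycentric subdivision, ordered by increasing codimension, citing Rourke--Sanderson; your thickened dual blocks $F^{*}\times F$ in $\sd\Delta$ are the same construction), with handle index equal to the codimension of the interior face and the boundary collar handled by the observation that its removal does not change the PL-homeomorphism type.
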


\begin{proof}
Let $\Delta''$ be the second barycentric subdivision of $\Delta.$  For each nonempty face $F$ of $\Delta,$ let $v_F$ be the vertex in $\Delta''$ which represents $F.$  The star of $v_F$ in $\Delta''$ is a PL-ball and every facet of $\Delta''$ is contained in exactly one such star.    Now order the interior faces $F$ of $\Delta,~F_1, F_2, \dots, F_r$ so that all of the codimension zero faces (the facets) of $\Delta$ come first, then the interior faces of codimension one, etc.   Finally, set $X_j = \bigcup ^j_{k=1} \st_{\Delta''} v_{F_k}.$  Thus, for $j \le f_{d-1}(\Delta), ~X_j$ is a disjoint union of $j$ PL-balls.  By \cite[Proposition 6.9]{Rourke-Sanderson} and the discussion that precedes it, $X_1 \subseteq \cdots \subseteq X_r$   is a handle decomposition of $\|\Delta\|$ with a collar of the boundary removed.  Furthermore, the index of the handle attached to go from $X_j$ to $X_{j+1}$ is the codimension of $F_{j+1}.$    Since removing a collar does not change the PL-homeomorphism type of a complex, $X_1 \subseteq \cdots \subseteq X_r$ is the handle decomposition of a PL-manifold which is PL-homeomorphic to $\|\Delta\|.$  Theorem \ref{h''=0} completes the proof.  
\end{proof}

What about the converse?

\begin{problem}
Suppose $X$ is a $(d-1)$-dimensional PL-manifold with boundary that has  a PL-handle decomposition using handles of index less than $i$ for some $1 \le i \le d-1.$  Is there a PL-triangulation $\Delta$ of $X$ such that $\bar{h}''_i(\Delta) = 0?$
\end{problem}

  For $i = 1,2,$ and $d-1$ the answer to the above question is yes. If $X$ has a PL-handle decomposition involving only handle additions of index zero, then $X$ is a disjoint union of PL-balls.    Hence a disjoint union of $(d-1)$ simplices triangulates $X$ and has $\bar{h}''_1=0.$  For $i=2$ we first observe that if $X$ has a PL-handle decomposition using handles of index zero or one,  then  $X$ is a handlebody and all of these have stacked triangulations, which are precisely triangulations with $\bar{h}''_2=0$. (This observation is any easy consequence of, say, \cite[Theorem 3.11]{DattaMurai}.)   For the last case we first note that any $(d-1)$-dimensional PL-manifold with nonempty boundary has a PL-handle decomposition which does not have $(d-1)$-handles \cite[Corollary 6.14 (ii)]{Rourke-Sanderson}.  On the other hand, every such space has a PL-triangulation with no interior vertices \cite[Theorem 1]{Bing}.
  

The above theorems and problems have close analogs for  manifolds without boundary.  Suppose $X'$ is obtained from $X$ by adding an $s$-handle.  Then the boundary of $X'$ is a $(d-2)$-dimensional PL-manifold without boundary and is obtained from $\partial X$  by removing a copy of $\partial B^s \times B^t$ from $\partial X$ and replacing it with $B^s \times \partial B^t$ along the common boundary $\partial B^s \times \partial B^t.$  Such an operation is called an {\em $(s-1)$-surgery} on $\partial X$ and we call $s-1$ the {\em index} of the surgery.     We denote such a surgery operation by $\partial X \Rightarrow \partial X'.$ So, if $X$ has a handle decomposition $B^{d-1} = X_1 \subseteq X_2 \subseteq \cdots \subseteq X_r = X,$ then $\partial X$ has a surgery sequence $S^{d-2}=\partial X_1 \Rightarrow \cdots \Rightarrow \partial X_r=\partial X.$  From the $g$-vector point of view the connection between these two is given by the following theorem of Murai--Nevo.  Note that if $\Delta$ is a $(d-1)$-dimensional, connected, orientable homology manifold without boundary, then eq.~\eqref{g-tilde} reduces to  $\tilde{g}_r(\Delta)=g_r(\Delta)-\binom{d+1}{r}\sum_{j=1}^r(-1)^{r-j}\tilde{\beta}_{j-1}(\Delta)$.  We use the same equation to define $\tilde{g}_r$ for {\em all} $(d-1)$-dimensional homology manifolds without boundary.

\begin{theorem}  {\rm\cite{Murai-Nevo-14}}  
Let $\Delta$ be a $(d-1)$-dimensional homology manifold and $d \ge 4.$   
  \begin{enumerate}
    \item  If $\partial \Delta \neq \emptyset$ and $\bar{h}''_i(\Delta)=0$ for some $i\leq(d-1)/2$, then $\tilde{g}_i(\partial \Delta) = 0.$
    \item  If $\partial \Delta = \emptyset$,  the links of the vertices of $\Delta$ have the WLP, and $\tilde{g}_i(\Delta)=0$ for some $1\leq i \leq (d-1)/2$, then $\Delta$ is $(i-1)$-stacked.
  \end{enumerate}
\end{theorem}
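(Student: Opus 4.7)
The plan is to handle the two parts in turn, translating $\tilde g_i$ and $\bar h''_i$ into a common language via Theorem~\ref{h''=0}, Remark~\ref{rem:h''-v2}, and the completion $\hat\Delta$.

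For part~1 I would first invoke Theorem~\ref{h''=0}: the hypothesis $\bar h''_i(\Delta)=0$ says that $\Delta$ is $(i{-}1)$-stacked, whence $\Delta$ has no interior faces of codimension $\geq i$ and, by the Murai--Nevo corollary cited after Theorem~\ref{h''=0}, $\tilde\beta_j(\Delta)=0$ for all $j\geq i$. Using the identities $h_j(\hat\Delta)=h_j(\Delta)+h_{j-1}(\partial\Delta)$ together with the formula for $h''_i(\hat\Delta)$ in Remark~\ref{rem:h''-v2}(2), I would rewrite both $\bar h''_i(\Delta)$ and $\tilde g_i(\partial\Delta)$ (taking care that $\partial\Delta$ is $(d{-}2)$-dimensional, so the binomials read $\binom{d}{\cdot}$ rather than $\binom{d+1}{\cdot}$) in the common variables $h_i(\Delta)$, $h_{i-1}(\partial\Delta)$, and Betti numbers of $\Delta$. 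The Betti numbers of $\partial\Delta$ appearing in $\tilde g_i(\partial\Delta)$ would then be eliminated through the long exact sequence of $(\Delta,\partial\Delta)$ and Poincaré--Lefschetz duality, exactly as in Remark~\ref{rem:h''-v2}. Because the vanishing $\tilde\beta_{\geq i}(\Delta)=0$ kills most of the corrections, the remaining terms should collapse into an identity of the form $\tilde g_i(\partial\Delta)=c\cdot \bar h''_i(\Delta)$ for a nonzero constant~$c$, delivering the conclusion.

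For part~2, $\partial\Delta=\emptyset$ gives $\hat\Delta=\Delta$, so relation~\eqref{tilde-g-and-h'} specializes (in the closed case) to $\tilde g_i(\Delta)=h''_{d-i}(\Delta)-h'_{d-i+1}(\Delta)$. Corollary~\ref{WLP} combined with the Gorenstein property of $\overline{\field(\Delta,\Theta)}$ from \cite[Thm.~1.4]{Novik-Swartz-09:Gorenstein} says that multiplication by a generic linear form $\omega$ is an isomorphism between symmetric graded pieces of $\overline{\field(\Delta,\Theta)}$; a Macaulay-type socle chase (of the kind performed in the proof of Theorem~\ref{thm:Kuhnel-indiv}) then forces the vanishing of $\tilde g_i(\Delta)$ to propagate, giving $g''_j(\Delta)=0$ for every $i\leq j\leq d-i$. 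Translating back via Theorem~\ref{known-h'-h''}(3) yields $\bar h''_j(\Delta)=0$ for all such $j$. It then remains to produce an $(i{-}1)$-stacked manifold whose boundary is $\Delta$; the natural candidate is a simplicial ``filling'' built by coning off missing faces of $\Delta$ in an order dictated by the socle elements that were killed, and then checking combinatorially that no interior faces of codimension $\geq i$ are created.

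The main obstacle in part~1 is pure bookkeeping: three families of Betti numbers (of $\Delta$, of $\partial\Delta$, and of the pair) enter with different binomial weights, and tracking their cancellations through Poincaré--Lefschetz duality will require patience. In part~2 the genuinely hard step is the last one: the algebraic vanishing $\bar h''_j=0$ is a \emph{necessary} condition for $\Delta$ to bound an $(i{-}1)$-stacked manifold (by part~1 above), but upgrading it to an actual geometric filling is non-trivial and will likely require adapting the algebraic shifting argument of Murai--Nevo or a direct construction driven by the minimal non-faces of $\Delta$ of small cardinality.
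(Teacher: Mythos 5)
The statement you are proving is cited verbatim from \cite{Murai-Nevo-14}; the paper supplies no proof of it, so there is no internal argument to measure yours against. Assessing the proposal on its own terms, there are genuine gaps in both parts.

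For part~1 the plan breaks down at the last step. You hope that the bookkeeping collapses into an identity $\tilde{g}_i(\partial\Delta)=c\cdot\bar{h}''_i(\Delta)$. No such identity is possible: $\bar{h}''_i(\Delta)$ involves $h_i(\Delta)$, which is not a function of $h(\partial\Delta)$ and Betti data, and moreover such an identity would make the implication an equivalence, which is false --- by Theorem~\ref{h''=0} the condition $\bar{h}''_i(\Delta)=0$ is equivalent to $(i-1)$-stackedness of $\Delta$, a property of the interior of $\Delta$, while $\tilde{g}_i(\partial\Delta)=0$ depends only on the boundary (one can triangulate the same ball with a stacked triangulation and with a non-stacked one having the same boundary). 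The hypothesis must therefore enter more substantively: $(i-1)$-stackedness yields $\bar{h}''_j(\Delta)=0$ for \emph{all} $j\geq i$, and it is those simultaneous vanishings, fed into the Dehn--Sommerville relations for the pair $(\Delta,\partial\Delta)$, that force $\tilde{g}_i(\partial\Delta)=0$. You gesture at some of these ingredients, but the claimed shape of the final collapse is wrong.

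For part~2 there are two issues. First, the ``socle chase'' step is unjustified. Writing $\tilde{g}_r(\Delta)=h''_{d-r}(\Delta)-h'_{d-r+1}(\Delta)$ for closed $\Delta$ and using symmetry of $h''$ together with $h'_j-h''_j=\dim_\field\big(\Soc\field(\Delta,\Theta)\big)_j$ gives, for $r\geq 2$, that $\tilde{g}_r(\Delta)=g''_r(\Delta)-\dim_\field\big(\Soc\field(\Delta,\Theta)\big)_{d-r+1}$. Hence $\tilde{g}_i(\Delta)=0$ only says $g''_i(\Delta)$ equals a socle dimension that need not vanish, and deducing $g''_j(\Delta)=0$ on the claimed range $i\le j\le d-i$ requires a real argument you have not supplied. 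Second, and more seriously, the construction of an $(i-1)$-stacked filling by ``coning off missing faces in an order dictated by killed socle elements'' is not the right object, and you give no reason it would be a homology manifold, let alone one with the prescribed stackedness. The mechanism this result actually rests on --- and the one reflected by this paper's own analysis of the $i=2$ case in Section~7 (Proposition~\ref{links of minimal tilde_g} and Theorem~\ref{minimal-tilde-g}) --- is to pass to vertex links: the WLP hypothesis and $\tilde{g}_i(\Delta)=0$ are used to show that every vertex link is an $(i-1)$-stacked homology sphere, and a separate local-to-global stackedness result then supplies the filling (essentially the complex of all simplices on the vertex set of $\Delta$ whose $(i-1)$-skeleton lies in $\Delta$). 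You are right that the last step is the hard one, but the route you sketch does not reach it.
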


\noindent  In combination with Theorem \ref{h''=0 -> handle decomposition} two natural questions are:

\begin{problem}
Let  $\Delta$ be a $(d-1)$-dimensional PL-manifold without boundary, $d \ge 4,$ and $2 \le i \le (d-1)/2.$
\begin{enumerate}
 \item  If   $\tilde{g}_i(\Delta) =0$, does $\|\Delta\|$ have a surgery sequence beginning with $S^{d-1}$ and using surgeries whose indices are less than $i-1?$ 
 \item  Suppose $X$ is a $(d-1)$--dimensional PL-manifold with a surgery sequence $X = X_1 \Rightarrow \cdots \Rightarrow X_r = \|\Delta\|$ whose indices are less than $i-1.$    Does $X$ have a PL-triangulation $\Delta$ with $\tilde{g}_i(\Delta) = 0?$
 \end{enumerate}
\end{problem}
\noindent Note that for $i=2$ the answer to the first part of the problem is yes; see the discussion preceding Theorem \ref{global minimal g2}.

In \cite{Murai-Novik-bdry} Murai and Novik considered a different invariant of the $f$-vector.  Let  $\Delta$ be a homology manifold  and define $f_i(\Delta, \partial \Delta)$ to be the number of interior $i$-dimensional faces.  If $\Delta$ has a nonempty boundary, $f_{-1}(\Delta, \partial \Delta)=0$ as the empty set is no longer an interior face.   Now define all of the other invariants, such as $h_i(\Delta,\partial \Delta)$ and  $g_i(\Delta, \partial \Delta)$ by using $f_i(\Delta, \partial \Delta)$  instead of $f_i(\Delta).$  For example,

$$g_1(\Delta, \partial \Delta) = h_1(\Delta, \partial \Delta) - h_0(\Delta, \partial \Delta) = f_0(\Delta, \partial \Delta) - (d+1) f_{-1}(\Delta, \partial \Delta),$$
and
$$g_2(\Delta, \partial \Delta) = h_2(\Delta, \partial \Delta) - h_1(\Delta, \partial \Delta) = f_1(\Delta, \partial \Delta) - d ~ f_0(\Delta, \partial \Delta) + \binom{d+1}{2} f_{-1}(\Delta, \partial \Delta).$$
\noindent Among Murai--Novik's results is the following.

\begin{theorem} \label{Murai-Novik inequalities} {\rm\cite{Murai-Novik-bdry}}
Let $\Delta$ be a $(d-1)$-dimensional $\field$-homology manifold and $d \ge 4.$ 
 \begin{enumerate}
   \item \label{Murai-Novik 1 or 2} For $i = 1$ or $2,~ g_i(\Delta, \partial \Delta)  \ge \binom{d+1}{i} \displaystyle\sum^i_{j=1} (-1)^{i-j} \tilde{\beta}_{j-1}(\Delta, \partial \Delta).$
   \item  If the links of the vertices of $\Delta$ satisfy the WLP and $1 \le i \le d/2,$ then $$ g_i(\Delta, \partial \Delta)  \ge \binom{d+1}{i} \displaystyle\sum^r_{j=1} (-1)^{i-j} \tilde{\beta}_{j-1}(\Delta, \partial \Delta).$$
 \end{enumerate}
\end{theorem}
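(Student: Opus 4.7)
My plan is to reduce Theorem \ref{Murai-Novik inequalities} to the $g$-theorem for the completion, Theorem \ref{g-theorem-v2}, via Poincar\'e--Lefschetz duality and the long exact sequence of the pair $(\Delta,\partial\Delta)$. The key guiding observation is that the left-hand side of the proposed inequality, minus its Betti correction, should match (up to sign-controlled pieces) the quantity $\tilde{g}_i(\complet{\Delta})$ defined in eq.~\eqref{g-tilde}, whose nonnegativity is already available from our earlier work.

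I would first dispose of the orientable case. Poincar\'e--Lefschetz duality gives $\tilde{\beta}_{j-1}(\Delta,\partial\Delta)=\tilde{\beta}_{d-j}(\Delta)$, rewriting the right-hand side as $\binom{d+1}{i}\sum_{j=1}^{i}(-1)^{i-j}\tilde{\beta}_{d-j}(\Delta)$. Next, I would derive a purely combinatorial identity expressing $g_i(\Delta,\partial\Delta)$ in terms of $g_i(\Delta)$ and $g_{i-1}(\partial\Delta)$: combining $f_j(\Delta,\partial\Delta)=f_j(\Delta)-f_j(\partial\Delta)$ for $j\geq 0$ with $f_{-1}(\Delta,\partial\Delta)=0$ (when $\partial\Delta\neq\emptyset$) in the $f$-to-$h$ transform, and simplifying via Pascal's identity $\binom{d-j}{d-i}=\binom{d-1-j}{d-1-i}+\binom{d-1-j}{d-i}$, should yield a formula of the shape $g_i(\Delta,\partial\Delta)=g_i(\Delta)+g_{i-1}(\partial\Delta)+c_{d,i}$ for an explicit binomial constant. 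Substituting this together with the long-exact-sequence identities relating $\tilde{\beta}_{j-1}(\Delta,\partial\Delta)$ to alternating sums of $\tilde{\beta}_\ast(\Delta)$ and $\tilde{\beta}_\ast(\partial\Delta)$, and using $\binom{d+1}{i}=\binom{d}{i}+\binom{d}{i-1}$, the proposed inequality should unfold into precisely $\tilde{g}_i(\complet{\Delta})\geq 0$.

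The conclusion then follows from Theorem \ref{g-theorem-v2}: part 1 covers $i=1,2$ unconditionally when $d\geq 4$, while part 2 covers $1\leq i\leq\lfloor d/2\rfloor$ under the WLP hypothesis on the links $\lk_{\complet{\Delta}}v$, which matches the WLP hypothesis on vertex links of $\Delta$ (plus, for boundary vertices, the corresponding links in $\partial\Delta$). For the non-orientable case I would switch to $\field=\Z/2\Z$, over which every topological manifold is orientable, and use this to obtain the inequality for $\Z/2\Z$-Betti numbers (when $\Delta$ is a topological manifold); handling arbitrary $\field$-homology manifolds in the non-orientable case requires a separate argument, perhaps via passing to an orientable double cover at the level of Betti numbers. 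The main obstacle is the combinatorial bookkeeping in the bridge identity: matching the single weight $\binom{d+1}{i}$ on the right-hand side of Theorem \ref{Murai-Novik inequalities} against the split weights $\binom{d}{i}$ and $\binom{d}{i-1}$ in $\tilde{g}_i(\complet{\Delta})$ requires careful tracking of signs and index shifts, and the long-exact-sequence manipulation that converts alternating sums of absolute Betti numbers into alternating sums of relative ones must close cleanly. Once that identity is in place the rest is a direct application of the earlier sections.
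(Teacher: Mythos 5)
The theorem you are trying to prove is not proved in this paper at all: it is quoted verbatim from \cite{Murai-Novik-bdry}, and the logical traffic in the paper runs in the opposite direction from your plan. Remark \ref{nonnegativity-not-new} and the discussion at the start of the $\tilde{g}_2$ material in Section 7 state that the Murai--Novik inequality \emph{implies} the nonnegativity $\tilde{g}_i(\complet{\Delta})\ge 0$ of Theorem \ref{g-theorem-v2}, and that for connected orientable manifolds with boundary $\tilde{g}_2(\complet{\Delta})\ge 0$ ``can be a strictly weaker statement than the Murai--Novik inequality.'' So your central claim --- that after the bookkeeping the Murai--Novik inequality ``should unfold into precisely $\tilde{g}_i(\complet{\Delta})\ge 0$'' --- cannot be correct: you would be deriving a strictly stronger statement from a strictly weaker one. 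One can see the failure concretely already for $d=4$, $i=2$, $\Delta$ connected, orientable, with boundary: the computation carried out in the proof of Proposition \ref{links of minimal tilde_g} (the identity $g_2(\Delta)+g_1(\partial\Delta)=g_2(\Delta,\partial\Delta)+g_2(\partial\Delta)$, the surface relation $g_2(\partial\Delta)=3\tilde{\beta}_1(\partial\Delta)$, the long exact sequence of the pair, and Poincar\'e--Lefschetz duality) gives
\[
g_2(\Delta,\partial\Delta)=\tilde{g}_2(\complet{\Delta})+10\,\tilde{\beta}_1(\Delta,\partial\Delta)-6\,\tilde{\beta}_0(\partial\Delta),
\]
so the Murai--Novik inequality $g_2(\Delta,\partial\Delta)\ge 10\,\tilde{\beta}_1(\Delta,\partial\Delta)$ is equivalent to $\tilde{g}_2(\complet{\Delta})\ge 6\,\tilde{\beta}_0(\partial\Delta)$. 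When $\partial\Delta$ has more than one component this is strictly stronger than the $\tilde{g}_2(\complet{\Delta})\ge 0$ supplied by Theorem \ref{g-theorem-v2}, and your bridge identity simply does not close: a term proportional to $\tilde{\beta}_0(\partial\Delta)$ survives with the wrong sign. (Indeed the paper exploits exactly this extra strength, deducing from Murai--Novik that $\partial\Delta$ is connected in that proof.) Relatedly, your proposed combinatorial identity $g_i(\Delta,\partial\Delta)=g_i(\Delta)+g_{i-1}(\partial\Delta)+c_{d,i}$ with a binomial constant is not of the right shape: the actual relation involves $g$-numbers of $\partial\Delta$ (e.g.\ the displayed identity above), not a constant.

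There are further mismatches you would still have to repair even if the numerics worked out. Theorem \ref{Murai-Novik inequalities} is stated for arbitrary $\field$-homology manifolds --- not necessarily connected or orientable --- while Theorem \ref{g-theorem-v2} requires both; switching to $\Z/2\Z$ changes the field over which all Betti numbers (and the WLP hypothesis, and the $h'$-, $h''$-numbers) are computed, so it does not yield the stated inequality for the original $\field$, and no double-cover mechanism for $\field$-homology manifolds is developed (nor is it clear how face numbers would transfer). Also the Lefschetz hypotheses differ: Theorem \ref{g-theorem-v2} needs the WLP for the links $\lk_{\complet{\Delta}}v$, which for boundary vertices are the links in $\Delta$ with their boundaries coned off, whereas the cited theorem assumes the WLP for the vertex links of $\Delta$ itself; Remark \ref{nonnegativity-not-new} bridges these only under additional WLP assumptions on links in $\partial\Delta$ and only for $r\le\lfloor (d-1)/2\rfloor$, which does not cover the full range $i\le d/2$. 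In short, the correct reading is that this theorem is an external input (proved in \cite{Murai-Novik-bdry} by different methods, via rigidity-type and algebraic arguments on the relative face numbers), and it cannot be recovered from the $g$-theorems for $\complet{\Delta}$ established here.
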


\noindent  In fact, Theorem \ref{Murai-Novik inequalities}(\ref{Murai-Novik 1 or 2}) holds for the larger class of normal pseudomanifolds with boundary and  Betti numbers replaced with the more subtle $\mu$-invariant of Bagchi and Datta.  See \cite[Theorem 7.3]{Murai-Novik-bdry} for details.

Now we consider the implications of equality in Theorem \ref{Murai-Novik inequalities}.  Suppose $\Delta$ satisfies the hypotheses of Theorem \ref{Murai-Novik inequalities}. Then $g_1(\Delta, \partial \Delta) = (d+1) \tilde{\beta}_0(\Delta, \partial \Delta)$  if and only if  all of the vertices of every component of $\Delta$ which has boundary are  on the boundary, and every component of $\Delta$ which does not have boundary is the boundary of a $d$-simplex.  In particular, if $\Delta$ is also a PL-manifold, then its components with boundary have {\em no}  further topological restrictions \cite{Bing},  while the components without boundary must be PL-spheres. The situation for general homology manifolds is less clear.  For instance, suppose $X$ is the suspension of $\R P^3.$ Now remove an open ball whose closure does not include the suspension points of $X$  and call the resulting space $Y.$  Then $Y$ is a $\Q$-homology ball and excision applied to homology with integer coefficients around the suspension points of $X$ shows that in any triangulation $\Delta$ of $Y$ the suspension points of $X$ must be vertices of $\Delta$ and are not on the boundary of $\Delta.$  

\begin{problem}
  What are the topological restrictions imposed on $\field$-homology manifolds by the relation $g_1(\Delta, \partial \Delta) = (d+1) \tilde{\beta}_0(\Delta, \partial \Delta)?$
\end{problem}

For $\field$-homology manifolds which satisfy equality in Theorem \ref{Murai-Novik inequalities}(\ref{Murai-Novik 1 or 2}) with $i=2$, Murai and Novik gave a local combinatorial description in terms of the links of the vertices.  If $\Delta$ does satisfy \ref{Murai-Novik inequalities}(\ref{Murai-Novik 1 or 2})  with equality and $i=2$, we say that $\Delta$ {\em has minimal $g_2.$} Before stating their result we review the  operations and properties of connected sum and handle addition.  

    Let $\Delta_1$ and $\Delta_2$ be $(d-1)$-dimensional complexes with disjoint vertex sets. Suppose $F_1$ and $F_2$ are facets of $\Delta_1$ and $\Delta_2$ respectively and $\phi:F_1 \to F_2$ is a bijection.   The {\em connected sum of $\Delta_1$ and $\Delta_2$ along $\phi$} is the complex obtained by identifying all faces $\sigma \subseteq F_1$ with $\phi(\sigma) \subseteq F_2$ and then removing the identified facet $F_1 \equiv F_2.$   The resulting complex is denoted by $\Delta_1 \# \Delta_2,$ or by $\Delta_1 \#_\phi \Delta_2$ if we need to specify $\phi.$   To define handle addition we  suppose $F_1$ and $F_2$ are both facets of a {\em single} component of a complex $\Delta$ and $\phi$ is still a bijection between them.  Now make the same identifications and facet removal as in the connected sum.  As long as the graph distance between $v$ and $\phi(v)$ is at least three for all $v \in F_1,$ the result is a simplicial complex which we denote by $\Delta^{\#},$ or by $\Delta^{\#}_\phi$ if we need to specify $\phi.$  If $F_1$ and $F_2$ are in the same complex, but distinct components we rename the components as distinct complexes and use the connected sum notation.  Note that if $\Delta_1$ and $\Delta_2$ are PL-manifolds without boundary then $\|\Delta_1\#\Delta_2\|$ and $\|\Delta_1^{\#}\|$ are produced from $\|\Delta_1\cup\Delta_2\|$ and $\|\Delta_1\|$ respectively by $0$-surgery.
    
    As pointed out in \cite[Lemma 7.7]{Murai-Novik-bdry} the connected sum of a $\field$-homology ball and a $\field$-homology sphere of the same dimension is a $\field$-homology ball whose boundary is the same as the boundary of the original homology ball.  Similarly, the connected sum of two $\field$-homology spheres of the same dimension is another $\field$-homology sphere. On the other hand, the connected sum of two  $\field$-homology balls of the same dimension is neither a  $\field$-homology ball nor a sphere. Thus, if $\Delta_1$ and $\Delta_2$ are $\field$-homology manifolds of the same dimension, then $\Delta_1 \#_\phi~\Delta_2$ is a $\field$-homology manifold if and only if for each vertex $v$ in the identified facet at least one of $v$ or $\phi(v)$ is an interior vertex.  A similar statement holds for $\Delta^\#.$    Lastly, we observe that the boundary of $\Delta_1 \# \Delta_2$ is the disjoint union of the boundaries of $\Delta_1$ and $\Delta_2.$   Similarly, the boundary of $\Delta^\#$ equals the boundary of $\Delta.$ 
    
    Both connected sum and handle addition introduce a missing facet into the resulting complex.  A {\em missing facet} in a $(d-1)$-dimensional complex $\Delta$ is a subset $F$ of cardinality $d$ of the vertices such that $F \notin \Delta,$ but every proper subset of $F$ is a face of $\Delta.$ For future inductive purposes we observe that connected sum and handle addition strictly increase the number of missing facets.   In homology manifolds missing facets characterize the connected sum and handle addition operations.
    
    \begin{proposition} \label{missing facets}
      Suppose $\Delta$ is a $(d-1)$-dimensional homology manifold, $d \ge 4$, and $F$ is a missing facet of $\Delta.$  Then either $\Delta$ is a connected sum of homology manifolds, or $\Delta$ is the result of a handle addition on a homology manifold.  
          \end{proposition}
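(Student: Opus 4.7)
The idea is to ``cut'' $\Delta$ along $\partial F$, which is a $\field$-homology $(d-2)$-sphere, and close up the result by reattaching one or two copies of the simplex on $F$. Write $F = \{u_1, \ldots, u_d\}$, introduce disjoint new vertices $u_1', \ldots, u_d'$, and set $F' = \{u_1', \ldots, u_d'\}$. Call two facets $\sigma, \tau$ of $\Delta$ equivalent if they can be joined by a chain of facets in which consecutive members share a $(d-2)$-face not contained in $F$, and fix an assignment $s$ of labels $\{+,-\}$ to the resulting equivalence classes (with $s \equiv +$ if there is only one class). Define $\tilde{\Delta}$ by taking as facets: every facet $\sigma$ of $\Delta$ with $s(\sigma) = +$ kept as is; for each facet $\tau$ with $s(\tau) = -$, the facet obtained by replacing each $u_i \in \tau \cap F$ with $u_i'$; and the two new simplices on $F$ and on $F'$.

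The key technical point is that this equivalence relation has \emph{at most two} classes. For each $u_i \in F$, the link $L_i := \lk_\Delta u_i$ is a $\field$-homology $(d-2)$-sphere or ball in which $G_i := F \setminus \{u_i\}$ is a missing facet: every proper subset of $G_i$ extends by $u_i$ to a face of $\Delta$, but $G_i \notin L_i$ since $F \notin \Delta$. Thus $\partial G_i$ is a $\field$-homology $(d-3)$-sphere inside $L_i$, and Alexander--Lefschetz duality forces $L_i \setminus \partial G_i$ to have at most two components; the local ``sides'' obtained in this way at each $u_i$ are compatible across the common $(d-3)$-faces $F \setminus \{u_i,u_j\}$ and so determine a consistent global bipartition of the facets of $\Delta$.

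One then verifies that $\tilde{\Delta}$ is a $(d-1)$-dimensional $\field$-homology manifold: links of vertices outside $F \cup F'$ are unchanged from $\Delta$, while the link of $u_i$ in $\tilde{\Delta}$ is the $+$-side of $L_i$ capped off by $G_i$, again a sphere or ball by another Lefschetz-duality computation, and similarly for $u_i'$. If $\tilde{\Delta}$ has two components $\Delta_1 \ni F$ and $\Delta_2 \ni F'$, then $\Delta = \Delta_1 \#_\phi \Delta_2$ under $\phi\colon u_i \mapsto u_i'$; otherwise $\tilde{\Delta}$ is connected and $\Delta = \tilde{\Delta}^{\#}_\phi$. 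In the handle case the required graph distance $d_{\tilde{\Delta}}(u_i, u_i') \geq 3$ is automatic: no edge of $\tilde{\Delta}$ joins a primed vertex to an unprimed one by construction, while an edge $\{u_i, w\}$ with $w \notin F \cup F'$ can only descend from facets of $\Delta$ that lie in a single equivalence class, because the $\field$-homology $(d-3)$-sphere or ball $\lk_\Delta\{u_i,w\}$ (connected since $d \geq 4$) is bridged by $(d-2)$-faces of $\Delta$ which all contain $w \notin F$.

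The main obstacle I anticipate is making the Lefschetz-type separation statement rigorous for general $\field$-homology manifolds, used both for the global split of $\Delta$ along $\partial F$ and for the local analysis of each $L_i$; in the PL category this reduces to the two-sidedness of the embedded sphere $\|\partial F\|$, but for general $\field$-homology manifolds the argument must be carried out using local cohomology, or alternatively by induction on $d$ with base case $d = 4$ handled by the Jordan curve theorem for $\field$-homology $2$-spheres and $2$-balls.
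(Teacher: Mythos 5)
Your overall route is the same cut-and-cap strategy as the paper's: slice along $\partial F$, cap both copies with a simplex, and read off a connected sum or a handle addition from the number of pieces. The difference is that the paper performs the cut on the completion $\complet{\Delta}$ rather than on $\Delta$ itself, and the two steps you leave as ``Lefschetz-duality computations'' are exactly the steps that device is there to handle. The first gap is your claim that the local two-sidedness at each $u_i$ is ``compatible across the common $(d-3)$-faces and so determines a consistent global bipartition.'' Local consistency along $\partial F$ does not rule out monodromy around $\partial F$; the very same words would ``prove'' the statement for $d=3$, where it is false (every missing triangle of the $6$-vertex $\R P^2$ has a one-sided boundary cycle, and an Euler-characteristic/vertex count shows that complex is neither a connected sum nor a handle addition of homology $2$-manifolds). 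The consistent bipartition \emph{is} the two-sidedness of $\|\partial F\|$, and this is where $d\ge 4$ (simple connectivity of the $(d-2)$-sphere $\partial F$) must enter; the paper obtains it by observing that in $\complet{\Delta}$ all vertices of $F$ have homology-sphere links, so Alexander duality gives local two-sidedness, and then invoking the argument of \cite[Lemma 3.2]{Basak-Swartz} for the global statement. You flag this as your main obstacle, but without it your complex $\tilde{\Delta}$ is not even well defined, so it is a genuine missing ingredient rather than a technicality.

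The second gap is the assertion that the link of $u_i$ in $\tilde{\Delta}$ is ``the $+$-side of $L_i$ capped off by $G_i$, again a sphere or ball by another Lefschetz-duality computation.'' When $u_i$ is a boundary vertex, $L_i$ is a homology ball, $\partial G_i$ can meet $\partial L_i$ (indeed $G_i$ can be a boundary ridge, in which case $\partial G_i$ does not separate $L_i$ at all), and you must in particular show that $\partial L_i$ lies entirely on one side, so that the two capped pieces are a homology ball and a homology sphere rather than two complexes with boundary --- the latter would contradict manifoldness of $\Delta$ at $u_i$, since a connected sum of two homology balls is neither a ball nor a sphere. No direct duality computation is offered for this, and it is not routine; the paper's proof sidesteps it by coning off the boundary of the link with $v_0$, so that the completed link is a homology sphere with $G_i$ as a missing facet, decomposing that sphere as a connected sum of two homology spheres (this is where the known closed case, \cite[Lemma 3.3]{Bagch-Datta-LBT}, is used), and then deleting $v_0$, which lies in exactly one of the two pieces. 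So what your proposal still needs is either the completion trick or an honest substitute for these two steps; the purely combinatorial parts --- the facet equivalence relation, the case split into connected sum versus handle addition, and the graph-distance-$\ge 3$ check via connectivity of edge links --- are fine.
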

    
    \begin{proof}
		 Consider $\complet{\Delta}$. In $\complet{\Delta}$, the links of all of the vertices of $F$ are homology spheres, and so Alexander duality implies that the boundary of $F$ is locally two-sided. The argument of \cite[Lemma 3.2]{Basak-Swartz} then shows that $\|\partial F\|$ is two-sided in $\|\complet{\Delta}\|$. Now, if $\partial\Delta=\emptyset$, in which case $\Delta=\complet{\Delta}$, and $\|\partial F\|$ is two-sided in $\|\Delta\|$, the above statement is known; for a very detailed treatment see \cite[Lemma 3.3]{Bagch-Datta-LBT}. So assume $\partial\Delta\neq\emptyset$. Cut $\complet{\Delta}$ along the boundary of $F$ and fill in the two missing $(d-1)$-faces that result from $F$. We obtain either a connected complex or two disjoint complexes one of which contains $v_0$ --- the singular vertex of $\complet{\Delta}$. Thus we can write $\complet{\Delta}=\complet{\Gamma}^\#$ or $\complet{\Delta} = \complet{\Delta}_1 \# \Delta_2$, where $v_0$ is in $\complet{\Delta}_1$. Removing $v_0$ allows us to write $\Delta= \Gamma^\#$ or $\Delta= \Delta_1 \# \Delta_2$.  
		
      We consider the case $\Delta = \Delta_1 \#_\phi \Delta_2,~\phi:F_1 \to F_2,$ as the handle addition case is virtually identical.  All that remains is to show that $\Delta_1$ and $\Delta_2$ are homology manifolds.  The vertices of $\Delta_1$ and $\Delta_2$ which are not in $F_1$ or $F_2$   have links which are simplicially isomorphic to their image in $\Delta, $ and hence are homology balls or spheres.  Now suppose that $v \in F_1$ and let $x$ be its image in $F.$  If the link of $x$ in $\Delta$ was a homology sphere, then the links of $v$ in $\Delta_1$ and $\phi(v)$ in $\Delta_2$ are also homology spheres.  If the link of $x$ in $\Delta$ was a homology ball, then in $\complet{\Delta}$ the link of $x$ is a homology sphere $\Gamma$ which is the link of $x$ in $\Delta$ with its boundary coned off.   Since $F-x$ is a missing facet in $\Gamma,$ we can write $\Gamma = \Gamma_1 \# \Gamma_2,$ where each $\Gamma_i$ is a homology sphere and the identified facet is  $F-x.$  
 The link of $v$ in $\Delta_1$ is then $\Gamma_1$ with the vertex $v_0$ removed and hence is a homology ball, 
while the link of $\phi(v)$ in $\Delta_2$ is $\Gamma_2$ and is therefore a homology sphere.  Finally, to see that the boundaries of $\Delta_1$  and $ \Delta_2$ are (possibly empty) $(d-2)$-dimensional homology manifolds we simply recall that the boundary of $\Delta = \Delta_1 \# \Delta_2$ is equal to the disjoint union of the boundaries of $\Delta_1$ and $\Delta_2.$            
   \end{proof}

We now list several procedures which result in a $\Delta$ that has minimal $g_2.$  
All of the proofs are routine applications of the definitions and/or an expected Mayer-Vietoris sequence.   For instance, the proof of the third part relies on the following observations: $\tilde{\beta}_1\big(\Delta_1\#\Delta_2, \partial(\Delta_1\#\Delta_2)\big)=\tilde{\beta}_1(\Delta_1,\partial\Delta_1)+\tilde{\beta}_1(\Delta_2, \partial\Delta_2)$, $f_1\big(\Delta_1\#\Delta_2, \partial(\Delta_1\#\Delta_2)\big)=f_1(\Delta_1,\partial\Delta_1)+f_1(\Delta_2, \partial\Delta_2)-\binom{d}{2}$, and $f_0\big(\Delta_1\#\Delta_2, \partial(\Delta_1\#\Delta_2)\big)=f_0(\Delta_1,\partial\Delta_1)+f_0(\Delta_2, \partial\Delta_2)-d$.
\begin{proposition} \label{minimal g2 properties}
 Let $\Delta$ be a $(d-1)$-dimensional $\field$-homology manifold,  where $d\geq 4$.  
  \begin{enumerate}
    \item 
    If $\Delta$ has no interior edges, then $\Delta$ has minimal $g_2.$  
     \item \label{components}
       $\Delta$ has minimal $g_2$ if and only if each component of $\Delta$ has minimal $g_2.$ 
      \item 
       If $\Delta = \Delta_1 \# \Delta_2$ with $\Delta_1$ and $\Delta_2$ both $\field$-homology manifolds, then $\Delta$ has minimal $g_2$ if and only if $\Delta_1$ and $\Delta_2$ have minimal $g_2$ and at least one of $\Delta_1, \Delta_2$ has no boundary.  
      \item
      If $\Delta = \Gamma^\#$ with $\Gamma$ a $\field$-homology manifold, then $\Delta$ has minimal $g_2$ if and only if $\Gamma$ has minimal $g_2.$
  \end{enumerate}
\end{proposition}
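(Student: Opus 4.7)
The plan is to define the defect
\[
D(\Delta):=g_2(\Delta,\partial\Delta)-\binom{d+1}{2}\bigl[\tilde\beta_1(\Delta,\partial\Delta)-\tilde\beta_0(\Delta,\partial\Delta)\bigr],
\]
which is $\ge 0$ by Theorem \ref{Murai-Novik inequalities}(1) and vanishes exactly when $\Delta$ has minimal $g_2$, and to track how $D$ transforms under each of the four constructions. For part (1), since the link of an interior vertex is a $(d-2)$-sphere, every edge through an interior vertex is interior; hence ``no interior edges'' forces ``no interior vertices,'' and (because for $d\ge 4$ a closed $(d-1)$-complex would have edges, all of them interior) also forces $\partial\Delta\ne\emptyset$. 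Consequently $f_{-1}(\Delta,\partial\Delta)=f_0(\Delta,\partial\Delta)=f_1(\Delta,\partial\Delta)=0$, so $g_2(\Delta,\partial\Delta)=0$. Via the isomorphism $H_\ast(\Delta,\partial\Delta)\cong\tilde H_\ast(\complet\Delta)$ recalled in Remark \ref{rem:h''-v2}, it remains to show $\tilde\beta_0(\complet\Delta)=\tilde\beta_1(\complet\Delta)=0$. But every vertex of $\Delta$ lies in $\partial\Delta$, so the entire $1$-skeleton of $\complet\Delta$ sits inside the cone $v_0\ast\partial\Delta$; since this cone is contractible, $\complet\Delta$ is path-connected and every $1$-cycle of $\complet\Delta$ bounds already inside $v_0\ast\partial\Delta\subseteq\complet\Delta$.

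For part (2), I will check in each of the three cases (all components closed; all with boundary; a genuine mix) that $D$ is additive over the components of $\Delta$. The only subtlety is that the constant term $\binom{d+1}{2}f_{-1}(\Delta,\partial\Delta)$ contributes $\binom{d+1}{2}$ exactly once when every component is closed and $0$ otherwise, while $\tilde\beta_0(\Delta,\partial\Delta)$ is not the literal sum of the per-component $\tilde\beta_0(\Delta^{(j)},\partial\Delta^{(j)})$; a direct computation shows that these two non-additivities cancel and $D(\Delta)=\sum_jD(\Delta^{(j)})$. Since each summand on the right is nonnegative, both implications follow.

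For parts (3) and (4) I will substitute the hinted identities
\begin{align*}
f_1(\Delta_1\#\Delta_2,\partial)&=f_1(\Delta_1,\partial\Delta_1)+f_1(\Delta_2,\partial\Delta_2)-\binom{d}{2},\\
f_0(\Delta_1\#\Delta_2,\partial)&=f_0(\Delta_1,\partial\Delta_1)+f_0(\Delta_2,\partial\Delta_2)-d,\\
\tilde\beta_1(\Delta_1\#\Delta_2,\partial)&=\tilde\beta_1(\Delta_1,\partial\Delta_1)+\tilde\beta_1(\Delta_2,\partial\Delta_2),
\end{align*}
together with their handle-addition analogues (in which the right-hand side collapses to a single copy of $\Gamma$ with the same $-\binom{d}{i+1}$ corrections, and where $\tilde\beta_1$ goes \emph{up} by $1$) into the definition of $D$. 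The identity $d^2-\binom{d}{2}=\binom{d+1}{2}$ together with a bookkeeping of $\binom{d+1}{2}f_{-1}$ and $\tilde\beta_0$ according to which of $\Delta_1,\Delta_2$ have boundary then yields
\[
D(\Delta_1\#\Delta_2)=D(\Delta_1)+D(\Delta_2)+\binom{d+1}{2}\cdot\mathbf{1}\{\partial\Delta_1\ne\emptyset\text{ and }\partial\Delta_2\ne\emptyset\},
\]
and $D(\Gamma^\#)=D(\Gamma)$. Combined with $D(\cdot)\ge 0$, these two equations imply both biconditionals.

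The only genuinely nontrivial ingredient is the Mayer--Vietoris computation of $\tilde\beta_1(\,\cdot\,,\partial\,\cdot\,)$ under connected sum and handle addition. For the connected sum, writing $\Delta_1\#\Delta_2=(\Delta_1-F_1)\cup(\Delta_2-F_2)$ glued along the $(d-2)$-sphere $\partial F$, the Mayer--Vietoris sequence gives additivity of $\tilde\beta_1(\,\cdot\,,\partial\,\cdot\,)$ because $\tilde\beta_j(\partial F)=0$ for $0<j<d-2$. The handle-addition case is analogous, but here the Mayer--Vietoris sequence for gluing $F_1$ to $F_2$ inside a single component produces one extra nontrivial $1$-cycle (the new loop crossing the added handle), accounting for the $+1$ in $\tilde\beta_1$. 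I expect the bookkeeping in this last step to be the main obstacle, since one must verify that the relative homology of the pair $(\Gamma^\#,\partial\Gamma^\#)$ transforms as predicted in spite of the interior/boundary reclassification of faces on $F$ that is controlled by the vertex condition ``$v$ or $\phi(v)$ interior for all $v\in F$.''
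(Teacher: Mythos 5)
Your parts (1) and (2) are fine (the completion argument for $\tilde\beta_0=\tilde\beta_1=0$ and the cancellation of the $f_{-1}$ and $\tilde\beta_0$ non-additivities both check out), and your overall route is the one the paper intends: the paper gives no proof beyond the sentence preceding the proposition, which asserts exactly the three identities you quote plus ``routine Mayer--Vietoris.'' The genuine gap is in parts (3) and (4), at precisely the step you flag as the main obstacle. Your Mayer--Vietoris sketch for $\Delta_1\#\Delta_2$ only uses the vanishing of $H_j(\partial F)$ for $0<j<d-2$; the decisive term sits at the degree-zero end. Writing $X_i$ for $\Delta_i$ with the identified facet (but not its boundary) removed, the relevant segment is
\[
0\to H_1(X_1,\partial\Delta_1)\oplus H_1(X_2,\partial\Delta_2)\to H_1(\Delta,\partial\Delta)\to H_0(\partial F)\to H_0(X_1,\partial\Delta_1)\oplus H_0(X_2,\partial\Delta_2),
\]
and the class of a point of $\partial F$ dies in $H_0(X_i,\partial\Delta_i)$ exactly when the component of $\Delta_i$ containing $F_i$ has nonempty boundary. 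Hence when \emph{both} $\Delta_1$ and $\Delta_2$ have boundary, the connecting map hits all of $H_0(\partial F)\cong\field$ and one gets $\tilde\beta_1(\Delta,\partial\Delta)=\tilde\beta_1(\Delta_1,\partial\Delta_1)+\tilde\beta_1(\Delta_2,\partial\Delta_2)+1$, not additivity. (Concrete check: let $\Delta_2$ be a ball containing a facet $F_2$ all of whose vertices are interior and let $\Delta_1$ be the $(d-1)$-simplex; then $\Delta_1\#\Delta_2=\Delta_2-F_2$ and $\tilde\beta_1(\cdot,\partial\cdot)$ jumps from $0+0$ to $1$.) If you feed this corrected value into your own bookkeeping, the extra $\binom{d+1}{2}\mathbf{1}\{\partial\Delta_1\neq\emptyset\text{ and }\partial\Delta_2\neq\emptyset\}$ term disappears and $D$ comes out additive, so the ``only if $\ldots$ at least one has no boundary'' half of part (3) --- whose entire content is that penalty term --- is not delivered by the computation you describe.

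The handle-addition claim has the same problem: ``$\tilde\beta_1$ goes up by $1$'' is correct when $\partial\Gamma=\emptyset$, but for $\partial\Gamma\neq\emptyset$ it is not justified and in natural examples fails (a $1$-handle attached to a ball gives a handlebody, and $H_1$ of a handlebody relative to its boundary vanishes), while $g_2(\cdot,\partial\cdot)$ always increases by $d^2-\binom{d}{2}=\binom{d+1}{2}$; so $D(\Gamma^\#)=D(\Gamma)$ is likewise unestablished exactly in the bounded case that part (4) needs. This is not dismissible as bookkeeping: the $H_0(\partial F)$ contribution is precisely what distinguishes bounded from unbounded summands, which is the dichotomy the proposition is about, and the same subtlety is hidden in the hinted identity you borrow from the paper. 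To repair the argument you must either prove the Betti-number identities you use under hypotheses that make them valid (e.g.\ at least one summand without boundary) and restrict the cases accordingly, or carry out the relative Mayer--Vietoris honestly in all cases and then re-derive (and, where necessary, re-examine) the transformation rules for $D$ on which the biconditionals in parts (3) and (4) rest.
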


Here is the Murai--Novik restriction on links of vertices in complexes with minimal $g_2.$  In combination with the previous propositions it allows us to describe a global combinatorial characterization of such complexes.  
\begin{theorem}  \label{local minimal g2} {\rm\cite[Section 7]{Murai-Novik-bdry}}
    Let $\Delta$ be a $(d-1)$-dimensional $\field$-homology manifold with $d \ge 4$ and minimal $g_2.$    Then the link of every interior vertex is a stacked sphere.  Furthermore, for every boundary vertex $v$ there exists $m \ge 0$ (which depends on $v$) such that the link of $v$ is of the form 
    $$\T \# \Sm_1 \# \cdots \# \Sm_m,$$
    where $\T$ is a homology ball with no interior vertices and each $\Sm_i$ is the boundary of a $(d-1)$-simplex.    
\end{theorem}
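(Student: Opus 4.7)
The plan is a local-to-global argument via vertex links, in the spirit of the classical proofs of rigidity-type lower bound theorems. Since $d \ge 4$, every vertex link has dimension $d-2 \ge 2$ and is a $\field$-homology sphere (if the vertex is interior) or a $\field$-homology ball (if it is on the boundary), so Theorem \ref{Murai-Novik inequalities}(1) applies to it with $d$ replaced by $d-1$. The first step, and main technical input, is to establish an identity expressing the global $g_2$-excess
\[
E(\Delta) := g_2(\Delta,\partial\Delta) - \binom{d+1}{2}\bigl[\tilde{\beta}_1(\Delta,\partial\Delta) - \tilde{\beta}_0(\Delta,\partial\Delta)\bigr]
\]
as a nonnegative linear combination, indexed by vertices $v$, of the analogous local excesses $E(\lk_\Delta v)$. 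The ingredients are the standard link incidence identities $\sum_v f_{i-1}(\lk_\Delta v) = (i+1)f_i(\Delta)$ together with their interior-face refinements, combined with Mayer--Vietoris around vertex stars and the long exact sequence of the pair $(\Delta,\partial\Delta)$ to translate between $\tilde{\beta}_j(\Delta,\partial\Delta)$ and the corresponding Betti numbers of the links.

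With this identity in hand, the hypothesis $E(\Delta)=0$ forces $E(\lk_\Delta v)=0$ for every vertex $v$. When $v$ is interior, $\lk_\Delta v$ is a closed $(d-2)$-dimensional $\field$-homology sphere with $g_2=0$, and the equality case of Kalai's LBT for homology spheres identifies it as a stacked sphere. When $v$ is on the boundary, $\lk_\Delta v$ is a $(d-2)$-dimensional $\field$-homology ball; since both $\tilde{\beta}_0(\lk_\Delta v,\partial\lk_\Delta v)$ and $\tilde{\beta}_1(\lk_\Delta v,\partial\lk_\Delta v)$ vanish in this range, minimality of $E$ here means $g_2(\lk_\Delta v,\partial\lk_\Delta v)=0$.

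To deduce the claimed connected-sum structure in the ball case, I would induct on the number of missing facets of $\lk_\Delta v$, using Proposition \ref{missing facets} and Proposition \ref{minimal g2 properties}. The base case is a ball with no missing facets whose relative $g_2$ vanishes; the equality analysis underlying Theorem \ref{Murai-Novik inequalities}(1) for balls in \cite{Murai-Novik-bdry} should force such a ball to have no interior vertices, yielding $m=0$ with $\T=\lk_\Delta v$. For the inductive step, Proposition \ref{missing facets} decomposes $\lk_\Delta v$ across a missing facet as a connected sum $\Delta_1\#\Delta_2$ or a handle addition $\Gamma^\#$; Proposition \ref{minimal g2 properties} shows the pieces retain minimal $g_2$, and since the boundary of a connected sum is the disjoint union of boundaries, the boundary-ball constraint forces exactly one component to be a homology ball and the other to be a homology sphere. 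The sphere piece is stacked (by the interior-vertex case), and minimality of $g_2$ restricts its further missing facets so that it must be $\partial\sigma^{d-1}$, i.e.\ a single $\Sm_i$. Iterating peels off simplex boundaries until only the reduced ball $\T$ remains.

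The main obstacle is the first step: producing a local-to-global identity for $E(\Delta)$ whose coefficients match precisely those of Theorem \ref{Murai-Novik inequalities}(1) globally and locally, so that vanishing of the global excess implies vanishing of each local one. The presence of boundary complicates this in two ways — interior and boundary faces restrict differently to vertex links, and the relevant Betti numbers come from the pair $(\Delta,\partial\Delta)$ rather than from absolute homology of $\Delta$ — so verifying that the topological corrections assemble exactly into $\binom{d+1}{2}\bigl[\tilde{\beta}_1(\Delta,\partial\Delta)-\tilde{\beta}_0(\Delta,\partial\Delta)\bigr]$ globally and into $\binom{d}{2}\bigl[\tilde{\beta}_1-\tilde{\beta}_0\bigr]$ on each vertex link is the delicate part of the argument.
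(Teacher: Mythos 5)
The paper does not actually prove this statement (it quotes it from \cite[Section~7]{Murai-Novik-bdry}), so your argument has to stand entirely on its own, and it does not: the first step, which you correctly identify as the main technical input, is impossible. No identity of the form $E(\Delta)=\sum_v c_v\,E(\lk_\Delta v)$ can exist with nonnegative --- indeed with \emph{any} --- coefficients, because there are homology manifolds with boundary in which every local excess vanishes while the global excess is strictly positive, so the right-hand side is forced to be $0$ no matter what the $c_v$ are. Concretely, take $d=4$ and let $\Delta$ be the boundary complex of the $4$-dimensional cross-polytope with one facet removed: a $3$-ball on $8$ vertices. Each of the $4$ interior vertices has link an octahedron, and every triangulated $2$-sphere has $g_2=0$; each of the $4$ boundary vertices has link an octahedron minus a triangle, a disk with $3$ boundary vertices, $3$ interior vertices and $9$ interior edges, so its relative $g_2$ equals $9-3\cdot 3=0$; and all local Betti corrections vanish since the links are $2$-dimensional balls and spheres. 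Hence $E(\lk_\Delta v)=0$ for every $v$. Globally, however, $f_0(\Delta,\partial\Delta)=4$ and $f_1(\Delta,\partial\Delta)=18$, so $g_2(\Delta,\partial\Delta)=18-16=2$, while $\tilde{\beta}_0(\Delta,\partial\Delta)=\tilde{\beta}_1(\Delta,\partial\Delta)=0$; thus $E(\Delta)=2>0$. (The same phenomenon occurs for any non-stacked homology $3$-sphere in the closed case, since vertex links of $3$-manifolds are $2$-spheres with $g_2\equiv 0$.) So the implication you need --- minimal global $g_2$ forces each vertex link to have minimal $g_2$ --- is true but genuinely global; it cannot be obtained from an MPW-type summation over links, and in Murai--Novik it comes out of the algebraic (socle/rigidity-type) machinery behind Theorem \ref{Murai-Novik inequalities}, not from a counting identity.

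There is a second, smaller but still essential, gap in the boundary-vertex case. Even granting that $\lk_\Delta v$ is a homology ball with $g_2(\lk_\Delta v,\partial\lk_\Delta v)=0$, your induction on missing facets rests on the base case that such a ball with no missing facets has no interior vertices, and for this you appeal to ``the equality analysis underlying [Murai--Novik]'' --- which is precisely the statement being proved, so the argument is circular at exactly its nontrivial point. (The surrounding bookkeeping is essentially fine: a homology ball cannot arise by handle addition, a splitting across a missing facet produces one ball summand and one sphere summand, and the sphere summands with $g_2=0$ are stacked --- though a stacked sphere is a connected sum of several copies of $\partial\sigma^{d-1}$, not a single one, so one iterates rather than peeling off one $\Sm_i$ per missing facet.)
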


 Recall that  homology manifolds without boundary and minimal $g_2$ are well understood: according to \cite[Theorem 5.3]{Murai-15} (that built on \cite[Theorem 5.2]{Novik-Swartz-09:Socles} and \cite[Theorem 1.14]{Bagchi:mu-vector}, as well as on the notions of $\sigma$- and $\mu$-numbers introduced in \cite{Bagchi-Datta-14}), they are stacked homology manifolds without boundary, which in turn are precisely the elements of the Walkup's class introduced in \cite{Walkup-70} (see also \cite[Section 8]{Kalai-87}). Each such manifold is obtained by starting with several disjoint boundary complexes of the $d$-simplex and repeatedly forming connected sums and/or handle additions. In particular, if $\Delta$ is a stacked homology manifold without boundary, then  $\Delta$ is PL; furthermore,  $\|\Delta\|$ is a sphere, a sphere bundle over $S^1$, or a connected sum of several of these. In view of this and Proposition \ref{minimal g2 properties}(\ref{components}), 
we now concentrate on connected homology manifolds with boundary.  Our goal is to prove the following theorem.

\begin{theorem} \label{global minimal g2}
Let $\Delta$ be a $(d-1)$-dimensional, connected, $\field$-homology manifold  with boundary. Assume further that $\Delta$ has minimal $g_2$ and $d \ge 4.$  Then there is a sequence $\Delta_1 \rightarrow \cdots \rightarrow \Delta_r =\Delta$ such that every $\Delta_i$ has boundary, minimal $g_2,$ and $\Delta_1$ has no interior edges.  Furthermore,   for every $1 \le i \le r-1,~\Delta_{i+1}$ is equal to $\Delta^\#_i$ or $\Delta_i \# \Gamma,$ where $\partial \Gamma = \emptyset$ and $\Gamma$ has minimal $g_2.$  
\end{theorem}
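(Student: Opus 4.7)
The plan is to prove Theorem \ref{global minimal g2} by strong induction on the number of missing facets of $\Delta$; Propositions \ref{missing facets} and \ref{minimal g2 properties} together handle the inductive step, while the entire subtlety is concentrated in the base case.

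For the inductive step, suppose $\Delta$ has at least one missing facet. Proposition \ref{missing facets} writes $\Delta = \Delta'\#\Delta''$ or $\Delta = \Gamma^\#$. In the connected sum case, Proposition \ref{minimal g2 properties}(3) gives that both pieces have minimal $g_2$ and that at least one of them, say $\Delta''$, has empty boundary; since $\partial\Delta\neq\emptyset$, the other piece $\Delta'$ has boundary. Because $d\geq 4$ the gluing $(d-2)$-sphere $\partial F$ is connected, so $\Delta'$ is connected, and because connected sum strictly increases the number of missing facets, $\Delta'$ has strictly fewer missing facets than $\Delta$ and hence satisfies the inductive hypothesis. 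The handle addition case is analogous, with $\Gamma$ replacing $\Delta'$ and no $\Delta''$. Appending the corresponding operation to the sequence obtained for $\Delta'$ (or $\Gamma$) yields the required sequence for $\Delta$.

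For the base case I must show that if $\Delta$ has no missing facets, then $\Delta$ has no interior edges. Arguing the contrapositive, suppose $\Delta$ contains an interior edge $\{u,w\}$, and split into two sub-cases depending on whether $\Delta$ has an interior vertex. First, suppose $w$ is interior; by Theorem \ref{local minimal g2}, $\lk_\Delta(w)$ is a stacked $(d-2)$-sphere. A bookkeeping of the contribution of $w$ to $g_2(\Delta,\partial\Delta)$, using that every vertex of $\lk_\Delta(w)$ is an interior neighbor of $w$ and that any vertex beyond the $d$-vertex minimum contributes strictly positively, should combine with minimal $g_2$ to force $\lk_\Delta(w) = \partial\sigma^{d-1}$. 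Setting $N := N_\Delta(w)$, every proper subset of $N$ lies in $\lk_\Delta(w)\subseteq\Delta$, so $N$ would be a missing facet of $\Delta$ unless $N\in\Delta$. But if $N\in\Delta$ then $\st_\Delta(w)\cup\{N\} = \partial(\{w\}\cup N)\cong\partial\sigma^d$, and applying Theorem \ref{local minimal g2} to each vertex of $\{w\}\cup N$ shows none of their stars can extend beyond this subcomplex, so connectedness of $\Delta$ forces $\Delta=\partial\sigma^d$, contradicting $\partial\Delta\neq\emptyset$. Hence $N$ is a missing facet, contradicting the base-case assumption.

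In the second sub-case, both $u$ and $w$ lie in $\partial\Delta$ while $\{u,w\}$ remains interior, so $w$ is an interior vertex of $\lk_\Delta(u)$. Theorem \ref{local minimal g2} then writes $\lk_\Delta(u) = \T\#\Sm_1\#\cdots\#\Sm_m$ with $\T$ containing no interior vertices, forcing $m\geq 1$. Picking the apex $x\in\Sm_m$ off the identified $(d-1)$-face $F$ and mimicking the first sub-case, with $x$ playing the role of an interior vertex of the link and $F\cup\{x\}$ the candidate missing facet, should again yield a missing facet of $\Delta$. The main obstacle is precisely this step in both sub-cases whenever the ambient link is non-trivially stacked: verifying that the candidate $(d-1)$-face is not already a face of $\Delta$ through some facet avoiding the distinguished vertex. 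Ruling this out requires combining the minimal $g_2$ hypothesis, the local $(d-1)$-manifold structure, and the standing assumption that $\Delta$ has no missing facets, and this is where I expect the heart of the proof to reside.
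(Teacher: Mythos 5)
Your inductive step (missing facet $\Rightarrow$ decompose via Proposition \ref{missing facets}, then apply Proposition \ref{minimal g2 properties} and induction) is exactly the paper's. The genuine gap is in the base case, and it sits precisely where you flagged it. First, the ``bookkeeping'' claim in your first sub-case --- that minimal $g_2$ alone forces the link of an interior vertex to be the boundary of a $(d-1)$-simplex --- is false. Take $S=\partial\sigma^{d}\#\partial\sigma^{d}$ with apexes $a,b$ and identified facet on $U=\{u_1,\dots,u_d\}$, and let $\Delta$ be $S$ with the facet $\{a,u_2,\dots,u_d\}$ removed (keeping its boundary). Then $\Delta$ is a homology $(d-1)$-ball, so $\tilde{\beta}_0(\Delta,\partial\Delta)=\tilde{\beta}_1(\Delta,\partial\Delta)=0$; it has two interior vertices ($b$ and $u_1$) and $2d$ interior edges, so $g_2(\Delta,\partial\Delta)=2d-2d=0$ and $\Delta$ has minimal $g_2$; yet the link of the interior vertex $u_1$ is a stacked $(d-2)$-sphere on $d+1$ vertices, not a simplex boundary. (This $\Delta$ has the missing facet $U$, so it is consistent with the theorem, but your derivation never invokes the no-missing-facet hypothesis, so the example refutes it.) The local degree count cannot be globalized because interior edges joining two interior vertices are shared, as the example shows. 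Second, your closing admission is accurate: the step of certifying that the candidate $d$-set is actually missing when the link is nontrivially stacked is the heart of the matter, and your proposal does not supply it.

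For comparison, here is how the paper closes the base case. It first performs a reduction you do not have: as long as some vertex link is the boundary of a $(d-1)$-simplex, remove that vertex and replace its star by a facet (this just splits off a connected-sum factor $\partial\sigma^{d}$, so it is compatible with the desired sequence), after which every stacked link appearing in Theorem \ref{local minimal g2} is a nontrivial connected sum. Then, given an interior edge, in either of your two sub-cases one finds a vertex $v$ of that edge and a vertex $x$ in $\lk_\Delta v$ (the free apex of the last summand $\Sm_m$) such that $\lk_\Delta\{v,x\}=\partial G$ for a $(d-2)$-simplex $G$, i.e.\ $\st\{v,x\}=\{v,x\}\ast\partial G$. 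The key idea is a bistellar-flip argument against minimality: if $G\notin\Delta$, the $(d-2)$-bistellar move replacing the edge $\{v,x\}$ by the two facets $v\cup G$ and $x\cup G$ produces a complex homeomorphic to $\Delta$ with $g_2$ strictly smaller, violating the lower bound of Theorem \ref{Murai-Novik inequalities}; hence $G\in\Delta$, and then one of $v\cup G$, $x\cup G$ must be a missing facet, since otherwise $\Delta$ would contain $\partial(\{v,x\}\cup G)$, a closed $(d-1)$-manifold, and connectivity would force $\Delta=\partial(\{v,x\}\cup G)$, contradicting $\partial\Delta\neq\emptyset$. Without this flip-plus-minimality mechanism (or a substitute for it), your base case does not close.
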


\begin{proof}
 If the link of any vertex is the boundary of a $(d-1)$-simplex, then either $\Delta$ is the boundary of the $d$-simplex or we can remove the vertex and replace its star with a facet.  Repeating this procedure as many times as necessary we can assume that there is no vertex whose link is the boundary of a $(d-1)$-simplex.  The proof now continues  by induction on the number of missing facets.  

First we show that if $\Delta$ has no missing facets, then $\Delta$ has no interior edges and hence $\Delta=\Delta_1$ is the required sequence.  Thus let $e$ be an interior edge with endpoints $v$ and $w.$  There are two cases to consider: (i) either $v$ or $w$ is an interior vertex, say $v$, or (ii) both $v$ and $w$ are boundary vertices. Theorem \ref{local minimal g2} then shows that in the former case, the link of $v$ must be a stacked sphere which by our assumption is not the boundary of the simplex; hence, the link of $v$ is of the form $\Sm_0\#\Sm_1\#\cdots\#_{\phi}\Sm_m$, where $m\geq 1$ and $\Sm_m$ is the boundary of the $(d-1)$-simplex. Similarly, in the latter case, since $v$ is the boundary vertex whose link has the interior vertex $w$, the link of $v $ is $\T \# \Sm_1 \# \cdots \#_{\phi} \Sm_m,$ where $m\geq 1$ and $\Sm_m$ is the boundary of the $(d-1)$-simplex. Thus, in either case the link of $v$ contains a vertex $x$ (e.g., the vertex of $\Sm_m$ that is not in the image of $\phi$) such that the link of the edge $f=\{v,x\}$ is  the boundary of the $(d-2)$-simplex $G$ (the facet of $\Sm_m$ opposite to $x$). Hence $\st f$ is  $f \ast \partial G.$  If $G \notin \Delta$ then we retriangulate $\st f$ by removing $f$ and inserting two new facets  $v \cup G$ and $x \cup G.$  (This is usually called a $(d-2)$ bistellar move.)  The resulting complex is homeomorphic to $\Delta$ but has smaller $g_2.$  This is impossible, so  $G \in \Delta.$  However, $G\in\Delta$ implies that $v \cup G$ or $x \cup G$ is a missing facet of $\Delta$ as  otherwise $\Delta$ contains the boundary of the $d$-simplex $\{v,x\} \cup G.$

Once we know that $\Delta$ has at least one missing facet we can write $\Delta$ as  $\Delta_1 \# \Delta_2$ or $\Gamma^\#$  (see Proposition \ref{missing facets}) and apply Proposition \ref{minimal g2 properties} and the induction hypothesis  along with the known characterization of stacked homology manifolds without boundary to produce the required sequence of complexes.  
 \end{proof}
 
There are no immediately obvious Betti number restrictions on $\Delta$ when $\Delta$ has minimal $g_2.$  However, there are {\em some} topological restrictions.   For instance, let $X$ be an integral homology sphere with nontrivial fundamental group and let $Y$ be $X$ with a small ball removed. If $\Delta$ is a triangulation of $Y,$ then \cite[Theorem 7.3]{Murai-Novik-bdry} (see also \cite{Murai-Novik-fundam-gp}) can be used to show that even though $\tilde{\beta}_1(\Delta, \partial \Delta) = \tilde{\beta}_0(\Delta, \partial \Delta)=0,~g_2(\Delta, \partial \Delta) > 0.$

 \begin{problem}
   What topological restrictions does the above combinatorial decomposition imply for PL-manifolds with boundary that have minimal $g_2?$  What about general homology manifolds with boundary that have minimal $g_2?$
 \end{problem}

 \begin{problem}
 Is there a similar decomposition for $\Delta$ when $\Delta$ has minimal $g_i$ for $i \ge 3?$
 \end{problem}
 
 The last  inequality we consider is  $\tilde{g}_2(\complet{\Delta}) \ge 0.$  As noted in Remark \ref{nonnegativity-not-new},  at least for $d\geq 5$, this inequality is implied by Theorem \ref{Murai-Novik inequalities}.  In fact, for connected orientable homology manifolds with boundary, $\tilde{g}_2(\complet{\Delta}) \ge 0$ can be a strictly weaker statement than the Murai--Novik inequality in Theorem \ref{Murai-Novik inequalities}.  So it is reasonable to expect a stronger conclusion from $\tilde{g}_2(\complet{\Delta})=0.$ 
 When a connected orientable $\field$-homology manifold $\Delta$ satisfies $\tilde{g}_2(\complet{\Delta}) = 0$ we will say $\complet{\Delta}$ has {\em minimal $\tilde{g}_2.$}  (Note that for homology manifolds without boundary, having minimal $\tilde{g}_2$ and having minimal $g_2$ are equivalent properties.) We begin by noting how connected sum and handle addition interact with minimal $\tilde{g}_2.$   The proofs are the usual applications of Mayer-Vietoris and  the definitions.
 \begin{proposition} \label{connected sum and handle addition for g-tilde}
 Let $\Delta_1, \Delta_2,$ and $\Gamma$ be $(d-1)$-dimensional, connected, orientable $\field$-homology manifolds with boundary. 
   \begin{enumerate}
       \item 
       $\complet{\Gamma}^\#$ has minimal $\tilde{g}_2$ if and only if $\complet{\Gamma}$ has minimal $\tilde{g}_2.$
       \item
       Suppose that the connected sum of $\Delta_1$ and $\Delta_2$  is a $\field$-homology manifold. Then the completion of $\Delta_1 \# \Delta_2$ has minimal $\tilde{g}_2$ if and only if $\complet{\Delta}_1$ and $\complet{\Delta}_2$ have minimal $\tilde{g}_2$ and at least one of $\Delta_1$ or $\Delta_2$ has no boundary.            
   \end{enumerate}
 \end{proposition}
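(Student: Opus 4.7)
My plan is to use the closed-form expression
\[
\tilde{g}_2(\complet{\Delta}) = g_2(\Delta) + g_1(\partial\Delta) - \binom{d}{2}\tilde{\beta}_1(\Delta) - d\,\tilde{\beta}_{d-2}(\Delta),
\]
which is \eqref{g-tilde} specialized to $r=2$, and to track how each of the four summands transforms under handle addition and connected sum. Face counts change by elementary arithmetic, while Betti-number changes follow from Mayer--Vietoris, converted between $\Delta$ and $\complet{\Delta}$ via the duality $\tilde{\beta}_{d-2}(\Delta) = \tilde{\beta}_1(\complet{\Delta})$ recorded in Remark~\ref{rem:h''-v2}(1).

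For part 1, identifying the two facets of $\Gamma$ in the handle addition removes $d$ vertices and $\binom{d}{2}$ edges, so the identity $g_2 = f_1 - d f_0 + \binom{d+1}{2}$ gives $g_2(\Gamma^\#) - g_2(\Gamma) = \binom{d+1}{2}$; since $\partial\Gamma^\# = \partial\Gamma$, the $g_1(\partial\Gamma)$ term is unchanged. Decomposing $\Gamma^\#$ as $\cost F$ (where $F$ is the identified facet) together with a small collar of $F$ and running Mayer--Vietoris produces exactly one new $1$-cycle, so $\tilde{\beta}_1(\Gamma^\#) = \tilde{\beta}_1(\Gamma)+1$; applying the same argument inside $\complet{\Gamma^\#} = (\complet{\Gamma})^\#$ and dualizing yields $\tilde{\beta}_{d-2}(\Gamma^\#) = \tilde{\beta}_{d-2}(\Gamma)+1$. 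The four increments add up to $\binom{d+1}{2} - \binom{d}{2} - d = 0$, which proves part 1.

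For part 2 the same program is run on $\Delta_1 \# \Delta_2$. Face counting shows that $g_2$ is additive. The boundary of the connect sum is $\partial\Delta_1 \sqcup \partial\Delta_2$: when exactly one of $\partial\Delta_i$ is empty the $g_1(\partial)$ term is additive, while when both are nonempty the disjoint-union formula $g_1(\Gamma_1 \sqcup \Gamma_2) = g_1(\Gamma_1) + g_1(\Gamma_2) + d$ (obtained from $g_1 = f_0 - d$ on $(d{-}2)$-dimensional complexes) contributes an extra $+d$. Mayer--Vietoris across the common $(d-2)$-sphere $\partial F$ makes $\tilde{\beta}_1$ additive; running the analogous Mayer--Vietoris inside $\complet{\Delta_1 \# \Delta_2}$ and dualizing then determines $\tilde{\beta}_{d-2}$. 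Substituting these four changes into the $\tilde{g}_2$ formula and comparing the result with $\tilde{g}_2(\complet{\Delta_1}) + \tilde{g}_2(\complet{\Delta_2})$ in each boundary pattern yields the claimed characterization.

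The main obstacle is the Mayer--Vietoris computation inside the completion when both $\Delta_i$ have nonempty boundary: the intersection $\complet{\Delta_1} \cap \complet{\Delta_2}$ inside $\complet{\Delta_1\#\Delta_2}$ is the disconnected set $\partial F \sqcup \{v_0\}$, whose reduced $\tilde{H}_0$ contributes through the connecting homomorphism to $\tilde{\beta}_1(\complet{\Delta_1 \# \Delta_2})$ and hence, via the duality, to $\tilde{\beta}_{d-2}(\Delta_1\#\Delta_2)$. This contribution must be balanced against the $+d$ correction to $g_1(\partial)$ coming from the disconnected boundary; tracking both effects precisely is what distinguishes the two-nonempty-boundary regime from the one-empty-boundary regime in the statement.
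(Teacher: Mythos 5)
Your overall plan --- track the four terms of eq.~\eqref{g-tilde} under the two operations via face counts and Mayer--Vietoris --- is exactly the ``routine applications of Mayer--Vietoris and the definitions'' the paper intends, and your part 1 is correct: the increments $\binom{d+1}{2}$, $0$, $-\binom{d}{2}$, $-d$ cancel. Likewise, in part 2 the case where one summand is closed works as you say: all four terms are additive (for the closed summand use Poincar\'e duality $\tilde{\beta}_{d-2}=\tilde{\beta}_1$), so $\tilde{g}_2$ of the completion is additive, and nonnegativity of $\tilde{g}_2$ gives ``minimal iff both minimal'' in that regime.

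The gap is precisely at the point you defer to the last paragraph: the regime where both $\partial\Delta_1$ and $\partial\Delta_2$ are nonempty, which is what the necessity of ``at least one of $\Delta_1,\Delta_2$ has no boundary'' rests on. You assert that the extra $\tilde{H}_0$-contribution must be ``balanced against'' the $+d$ correction to $g_1(\partial)$ and that tracking this yields the claimed characterization, but you never carry the computation out --- and if you do, the two effects cancel exactly. Writing $A_i$ for $\|\Delta_i\|$ minus the open identified facet, the exact sequence of the pair $(\|\Delta_i\|,A_i)$ together with $H_{d-1}(\Delta_i;\field)=0$ (nonempty boundary) shows that $[\partial F]$ is a nonzero class in $H_{d-2}(A_i)$ and $\dim H_{d-2}(A_i)=\tilde{\beta}_{d-2}(\Delta_i)+1$; Mayer--Vietoris along $\partial F$ then gives $\tilde{\beta}_{d-2}(\Delta_1\#\Delta_2)=\tilde{\beta}_{d-2}(\Delta_1)+\tilde{\beta}_{d-2}(\Delta_2)+1$, while $g_2$ and $\tilde{\beta}_1$ are additive and $g_1(\partial)$ gains $+d$. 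Substituting into eq.~\eqref{g-tilde}, the $+d$ is exactly offset by $-d\cdot 1$, so $\tilde{g}_2$ of the completion is additive in this regime as well, and your bookkeeping produces no obstruction to both summands having boundary. (You can test this concretely for $d=4$: take two copies of the $6$-vertex $3$-ball obtained by stellarly subdividing one facet of the cone over the boundary of the $3$-simplex, and glue them along facets containing both interior vertices, pairing interior vertices with boundary vertices; every term comes out to give $\tilde{g}_2=0$ for the connected sum although both pieces have nonempty boundary.) So the ``only if'' half of part 2 does not follow from this computation; it needs a genuinely different argument --- or a re-examination of that clause of the statement --- rather than more careful arithmetic, and as written your proof of part 2 is incomplete.
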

 
 Like in the previous two cases, the key to analyzing complexes with minimal $\tilde{g}_2$ involves understanding the links of vertices.  
 
 \begin{proposition} \label{links of minimal tilde_g}
 Let $\Delta$ be a $(d-1)$-dimensional, connected, orientable $\field$-homology manifold with boundary such that $d \ge 4$ and the completion of $\Delta$ has minimal $\tilde{g}_2.$   Then the link of every interior vertex of $\Delta$ is a stacked sphere while the link of every boundary vertex is a stacked sphere with one vertex removed.
 \end{proposition}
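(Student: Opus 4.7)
My plan is to reduce the global equality $\tilde{g}_2(\complet{\Delta})=0$ to the local equalities $g_2(\lk_{\complet{\Delta}}v)=0$ at every non-$v_0$ vertex, and then to invoke the generalized lower bound theorem to conclude that each such link is stacked.

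The starting point is the observation that $\lk_{\complet{\Delta}}v=\complet{\lk_\Delta v}$ for every $v\in V$; this follows directly from $\complet{\Delta}=\Delta\cup(v_0\ast\partial\Delta)$ together with the identity $\lk_{\partial\Delta}v=\partial(\lk_\Delta v)$. In particular, this link is a $(d-2)$-dimensional $\field$-homology sphere, to which the Generalized Lower Bound Theorem \cite[Theorem 5.3]{Murai-15} applies (supplemented with the WLP of $2$-spheres \cite{Whiteley-90} in the borderline case $d=4$): $g_2(\lk_{\complet{\Delta}}v)\geq 0$, with equality if and only if the link is a stacked sphere.

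Next I would establish the vertex-summation identity. The double-counts $\sum_{w\in V_0}f_0(\lk_{\complet{\Delta}}w)=2f_1(\complet{\Delta})$ and $\sum_{w\in V_0}f_1(\lk_{\complet{\Delta}}w)=3f_2(\complet{\Delta})$, combined with the formula $g_2(L)=f_1(L)-(d-1)f_0(L)+\binom{d}{2}$ for any $(d-2)$-dimensional complex $L$, yield
\[
\sum_{w\in V_0}g_2(\lk_{\complet{\Delta}}w)=3g_3(\complet{\Delta})+(d-1)g_2(\complet{\Delta}).
\]
Isolating the contribution of $v_0$, using $\lk_{\complet{\Delta}}v_0=\partial\Delta$, rewrites this as
\[
\sum_{v\in V}g_2(\lk_{\complet{\Delta}}v)=3g_3(\complet{\Delta})+(d-1)g_2(\complet{\Delta})-g_2(\partial\Delta).
\]

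The critical step is to show the right-hand side vanishes when $\tilde{g}_2(\complet{\Delta})=0$. Using $g_2(\complet{\Delta})=g_2(\Delta)+g_1(\partial\Delta)$, the definition \eqref{g-tilde}, and the Poincar\'e--Lefschetz identifications of Remark \ref{rem:h''-v2}, I would re-express the right-hand side as a linear combination of $\tilde{g}_2(\complet{\Delta})$, the Murai--Novik-style excess of $g_3(\complet{\Delta})$ over its Betti-number lower bound, and a corresponding expression for $g_2(\partial\Delta)$ with a favorable sign --- each of which is controlled by Theorem \ref{Murai-Novik inequalities} applied to the pseudomanifold $\complet{\Delta}$ and the closed manifold $\partial\Delta$ respectively. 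Combined with the vertex-link non-negativity above, the hypothesis $\tilde{g}_2(\complet{\Delta})=0$ forces $g_2(\lk_{\complet{\Delta}}v)=0$ for every $v\in V$, so each link is a stacked $(d-2)$-sphere. Translating back to $\Delta$: for interior $v\in V$, $\lk_\Delta v=\lk_{\complet{\Delta}}v$ is stacked, while for boundary $v$, $\lk_\Delta v=\cost_{\lk_{\complet{\Delta}}v}v_0$ is a stacked sphere with the single vertex $v_0$ removed. The main obstacle will be in the critical step --- assembling the precise combination of Murai--Novik-style inequalities for $\complet{\Delta}$ and $\partial\Delta$ that certifies $3g_3(\complet{\Delta})+(d-1)g_2(\complet{\Delta})-g_2(\partial\Delta)\leq 0$ under $\tilde{g}_2(\complet{\Delta})=0$, with extra care required in the low-dimensional case $d=4$ where the standard ``minimal $g_2$ implies stacked'' characterization for $(d-2)$-spheres degenerates.
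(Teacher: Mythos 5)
Your reduction has a genuine gap at exactly the place you flag as ``the critical step,'' and it is not a technicality: the inequality $3g_3(\complet{\Delta})+(d-1)g_2(\complet{\Delta})-g_2(\partial\Delta)\leq 0$ is, via your (correct) summation identity and the nonnegativity of $g_2$ of the links, \emph{equivalent} to the statement that every link $\lk_{\complet{\Delta}}v$, $v\neq v_0$, has $g_2=0$ --- i.e.\ to (essentially) the conclusion of the proposition. The tools you propose to assemble it from cannot deliver it: Theorem \ref{Murai-Novik inequalities} and the generalized lower bound theorem give \emph{lower} bounds on $g$-numbers (and Theorem \ref{Murai-Novik inequalities} is stated for homology manifolds, not for the singular complex $\complet{\Delta}$), whereas you need an \emph{upper} bound on $g_3(\complet{\Delta})$ in terms of Betti numbers under the single hypothesis $\tilde{g}_2(\complet{\Delta})=0$. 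No such upper bound is available short of already knowing the near-stacked structure. The paper avoids this entirely by an algebraic argument: $\tilde{g}_2(\complet{\Delta})=0$ is rewritten via \eqref{tilde-g-and-h'} as $h''_{d-2}(\complet{\Delta})=h'_{d-1}(\complet{\Delta})$, and then the socle/surjectivity argument of \cite[Theorem 5.2]{Novik-Swartz-09:Socles}, run with Lemma \ref{analog-of-Prop4.24} in place of \cite[Proposition 4.24]{Swartz-09}, forces $h_{d-2}=h_{d-1}$ for every nonsingular vertex link, which for a $(d-2)$-sphere with $d\geq 5$ means stacked.

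Separately, your route cannot close the case $d=4$ even if the critical step were granted: there the links in $\complet{\Delta}$ are $2$-spheres, for which $g_2=0$ holds identically (Dehn--Sommerville/Euler), so ``local $g_2=0$'' carries no information and certainly does not imply stackedness; invoking the WLP of $2$-spheres does not repair this degeneracy. The paper handles $d=4$ by a different computation: using $g_2(\Delta)+g_1(\partial\Delta)=g_2(\Delta,\partial\Delta)+g_2(\partial\Delta)$, the surface identity $g_2(\partial\Delta)=3\tilde{\beta}_1(\partial\Delta)$, the long exact sequence of $(\Delta,\partial\Delta)$ and Poincar\'e--Lefschetz duality, it converts $\tilde{g}_2(\complet{\Delta})=0$ into equality in Theorem \ref{Murai-Novik inequalities} (minimal $g_2$, with $\partial\Delta$ connected), and then applies Theorem \ref{local minimal g2} together with the fact that triangulated $2$-disks without interior vertices are stacked spheres minus a vertex. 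Your opening observation $\lk_{\complet{\Delta}}v=\complet{\lk_\Delta v}$ and the summation identity are correct, but the proof as proposed is not salvageable without replacing its core by an argument of one of these two kinds.
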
 
 
 \begin{proof}
 First we consider $d \ge 5.$      Since $\tilde{g}_2(\complet{\Delta})=0$, eq.~\eqref{tilde-g-and-h'} implies that $h''_{d-2}(\complet{\Delta}) = h'_{d-1}(\complet{\Delta}).$  So an argument along the same lines as in \cite[Theorem 5.2]{Novik-Swartz-09:Socles} (but using Lemma \ref{analog-of-Prop4.24} instead of \cite[Proposition 4.24]{Swartz-09}) shows that the link of every nonsingular vertex in $\complet{\Delta}$ is a stacked sphere,  and the result follows.  This argument depends on the fact that  a $(d-1)$-dimensional homology sphere with $d \ge 4$ and $h_{d-2}=h_{d-1}$ is a stacked sphere.  Since  vertex links of $3$-dimensional homology spheres are  $2$-dimensional spheres and $h_1=h_2$ for all two-dimensional spheres, stacked or not, we use a different approach for  $d=4.$
 
 Thus assume $d=4.$  The definition of $g_i$ shows that 
 $$g_2(\Delta) + g_1(\partial \Delta) = g_2(\Delta, \partial \Delta) + g_2(\partial \Delta).$$
 So $\tilde{g}_2(\complet{\Delta})=0$ and (\ref{g-tilde}) imply that $g_2(\Delta, \partial \Delta) + g_2(\partial \Delta) = 6 \tilde{\beta}_1(\Delta) + 4 \tilde{\beta}_2(\Delta).$ Since $\partial \Delta$ is an orientable compact surface $g_2(\partial \Delta)= 3 \tilde{\beta}_1(\partial \Delta)$ and hence,
 $$g_2(\Delta, \partial \Delta) = 6 \tilde{\beta}_1(\Delta)  - 3 \tilde{\beta}_1(\partial \Delta)+ 4 \tilde{\beta}_2(\Delta).$$
 Now, the long exact sequence of the pair $(\Delta, \partial \Delta)$ implies that 
 $$\tilde{\beta}_3(\Delta, \partial \Delta) + \tilde{\beta}_2(\Delta) + \tilde{\beta}_1(\partial \Delta) + \tilde{\beta}_1(\Delta, \partial \Delta) = \tilde{\beta}_2(\partial \Delta) + \tilde{\beta}_2(\Delta, \partial \Delta)+ \tilde{\beta}_1(\Delta) + \tilde{\beta}_0(\partial \Delta).$$
 \noindent
 Poincar\'e-Lefschetz duality applied to $\Delta$ and $\partial \Delta$ gives us
 $$1 + 2 \tilde{\beta}_1(\Delta, \partial \Delta) + \tilde{\beta}_1(\partial \Delta) = 2 \tilde{\beta}_1(\Delta) + 2 \tilde{\beta}_0 (\partial \Delta) + 1.$$
 \noindent 
 Thus,
 $$g_2(\Delta, \partial \Delta) = 6 \tilde{\beta}_1(\Delta, \partial \Delta) - 6 \tilde{\beta}_0(\partial \Delta) + 4 \tilde{\beta}_2(\Delta) = 10 \tilde{\beta}_1(\Delta, \partial \Delta) - 6 \tilde{\beta}_0(\partial \Delta).$$
 \noindent  By Theorem \ref{Murai-Novik inequalities}, $\Delta$ has minimal $g_2$ and $\partial \Delta$ has only one component.  Theorem \ref{local minimal g2} and the fact that triangulations of two-dimensional disks with no interior vertices are stacked spheres with one vertex removed proves that the links of the vertices of $\Delta$ are as claimed.  
  \end{proof}

 \begin{theorem} \label{minimal-tilde-g}
 Let $\Delta$ be a $(d-1)$-dimensional, connected, orientable  homology manifold with boundary such that $\complet{\Delta}$ has minimal $\tilde{g}_2$ and $d \ge 4.$  Then there exists a sequence of $(d-1)$-dimensional homology manifolds $\Delta_1 \longrightarrow \cdots \longrightarrow \Delta_r=\Delta$ such that $\Delta_1$ is a stacked homology manifold, and for all $1 \le j \le r-1,~\Delta_{j+1} = \Delta_j \# \Gamma,$ where $\Gamma$ has minimal $\tilde{g}_2$ and no boundary,  or $\Delta_{j+1} = \Delta^\#_j.$ 
 \end{theorem}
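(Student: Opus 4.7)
The plan is to proceed by induction on the number of missing facets of $\Delta$, closely mirroring the proof of Theorem \ref{global minimal g2}. The two workhorses are Proposition \ref{missing facets}, which converts a missing facet into a connected-sum or handle-addition decomposition, and Proposition \ref{connected sum and handle addition for g-tilde}, which shows that these operations interact correctly with the minimal $\tilde{g}_2$ hypothesis on completions. As a preliminary normalization, one may first peel off any vertex of $\Delta$ whose link is the boundary of a $(d-1)$-simplex: each such peel corresponds to undoing a connected sum with the boundary of a $d$-simplex, which is a closed stacked homology manifold and hence of minimal $\tilde{g}_2$ (for closed manifolds $g_2$ and $\tilde{g}_2$ coincide, and closed minimal-$g_2$ manifolds are precisely the stacked ones). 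These steps can be appended at the end of any sequence built for the trimmed complex.

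For the inductive step, suppose $\Delta$ has a missing facet. Proposition \ref{missing facets} decomposes $\Delta$ as a handle addition $\Gamma^\#$ or a connected sum $\Delta'\#\Lambda$. Since $\Delta$ is connected with nonempty boundary, in the connected-sum case Proposition \ref{connected sum and handle addition for g-tilde} forces exactly one of $\Delta',\Lambda$ to have boundary (say $\Delta'$), both completions to have minimal $\tilde{g}_2$, and therefore the boundaryless piece $\Lambda$ to be a stacked closed homology manifold (again by the closed-case coincidence of minimal $g_2$ and minimal $\tilde{g}_2$). Each piece has strictly fewer missing facets than $\Delta$, so the inductive hypothesis yields a sequence $\Delta_1\to\cdots\to\Delta_{r-1}$ ending at $\Gamma$ or $\Delta'$. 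Appending the final handle addition, or the connected sum with $\Lambda$, produces the required sequence for $\Delta$.

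The substantive work is the base case, where $\Delta$ has no missing facets and one must prove that $\Delta$ itself is already a stacked homology manifold with boundary, i.e.\ has no interior face of dimension $\le d-3$. The starting point is Proposition \ref{links of minimal tilde_g}: every interior vertex link is a stacked $(d-2)$-sphere and every boundary vertex link is a stacked $(d-2)$-ball. Suppose $v$ is an interior vertex. If $\lk_\Delta v$ is the boundary of a $(d-1)$-simplex on vertices $v_1,\ldots,v_d$, then $\{v_1,\ldots,v_d\}$ must lie in $\Delta$ (otherwise it is already a missing facet of $\Delta$), and together with $\st_\Delta v$ it assembles into a subcomplex equal to the boundary of a $d$-simplex; since every $(d-2)$-face of this sphere already attains a full two-point link within it, local manifold conditions force the sphere to be a connected component of $\Delta$, contradicting connectedness of $\Delta$ together with $\partial\Delta\neq\emptyset$. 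When $\lk_\Delta v$ is any other stacked $(d-2)$-sphere, it contains a missing facet $F$, and one must argue that the $(d-1)$-subset $F$ actually lies in $\Delta$, so that $F\cup\{v\}$ becomes a missing facet of $\Delta$ and contradicts the base-case hypothesis. An analogous argument applied to boundary vertex links, and iterated through codimension-two faces, rules out interior faces of every dimension $\le d-3$. The main obstacle is precisely this ``lifting'' step: a missing facet of $\lk_\Delta v$ only descends to a missing facet of $\Delta$ when the underlying $(d-1)$-subset is already a face of $\Delta$, and bridging this gap will require a careful local analysis combining the stackedness information on every vertex link with the global ``no missing facets'' hypothesis.
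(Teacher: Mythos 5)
Your overall architecture matches the paper's proof: normalize away vertices whose link is the boundary of a $(d-1)$-simplex, induct on the number of missing facets using Proposition \ref{missing facets} together with Proposition \ref{connected sum and handle addition for g-tilde}, and reduce everything to the base case where $\Delta$ has no missing facets. But at exactly the point you flag as ``the main obstacle'' --- showing that a missing facet $F$ of a vertex link actually satisfies $F\in\Delta$, so that $\{w\}\cup F$ is a missing facet of $\Delta$ --- you stop and say the gap remains to be bridged. That lifting step is the substantive content of the base case, and without it the argument does not close: all of your conclusions (no interior vertices, links of boundary vertices are stacked balls rather than merely stacked spheres minus a vertex, hence $\Delta$ stacked) hinge on it. Note also that Proposition \ref{links of minimal tilde_g} gives you ``stacked sphere with one vertex removed'' for boundary vertex links, not ``stacked ball''; upgrading the former to the latter is itself an application of the same lifting claim (one must show $v_0$ lies in every summand $\Sm_i$ of the stacked decomposition of $\lk_{\complet{\Delta}}w$, since otherwise the link of $w$ in $\Delta$ would contain a missing facet), so your sketch quietly assumes the very statement that is missing.

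The paper closes the gap with a Walkup-type retriangulation that uses minimality of $\tilde{g}_2$ directly. Suppose $F$ is a missing facet of $\lk_{\complet{\Delta}}w$ and $F\notin\complet{\Delta}$. Since $\lk_{\complet{\Delta}}w$ is a stacked sphere, one removes $w$, inserts $F$, observes that $\lk_{\complet{\Delta}}w\cup F$ is the union of two PL-spheres meeting along $F$, and cones each of these spheres with a new vertex ($x$ and $y$) to obtain a complex $\complet{\Delta}'$ homeomorphic to $\complet{\Delta}$. An edge count shows $g_2(\complet{\Delta}')=g_2(\complet{\Delta})-1$, and since the topology (hence all Betti-number corrections entering $\tilde{g}_2$) is unchanged, this forces $\tilde{g}_2(\complet{\Delta}')=-1<0$, contradicting the nonnegativity guaranteed by Theorem \ref{g-theorem-v2}. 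Hence $F\in\complet{\Delta}$ after all, and $\{w\}\cup F$ is a missing facet of $\Delta$, which is what the base case needs (and, applied once more, what forces $v_0$ into every $\Sm_i$ so that boundary vertex links are genuinely stacked balls; the final passage from ``all vertex links are stacked balls'' to ``$\Delta$ is stacked'' is then the easy observation that an interior face $F$ of codimension at least two would give an interior face $F-w$ of codimension at least two in $\lk w$). To repair your proposal you need to supply this retriangulation argument, or some substitute that converts the hypothesis $\tilde{g}_2(\complet{\Delta})=0$ into the required local statement; no purely combinatorial ``local analysis'' of the links will do it, because the claim genuinely uses the minimality hypothesis and fails without it.
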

 
 \begin{proof}
As in the proof of Theorem \ref{global minimal g2} we can assume that there is no vertex whose link is the boundary of a $(d-1)$-simplex and continue by induction on the number of missing facets in $\Delta.$ If $\Delta$ has a missing facet, then  Propositions \ref{missing facets} and  \ref{connected sum and handle addition for g-tilde}   allow us to write $\Delta$ as a connected sum or handle addition as required for the induction step.  
 
 In preparation for the base case where $\Delta$ has no missing facets, we first show that if the link of any vertex $w$ has a missing facet $F$, then $\{w\} \cup F$ is a missing facet of $\Delta.$  For this it is sufficient to prove that $F \in \Delta.$  To prove that $F \in \Delta$ we follow Walkup's idea in \cite{Walkup-70} and retriangulate $\complet{\Delta}$ as follows. The previous proposition shows that the link of $w$ in $\complet{\Delta}$ is a stacked sphere.   Remove $w$ from $\complet{\Delta}$ and insert $F.$  The union of $\lk_{\complet{\Delta}} w$ and $F$ consists of two PL-spheres whose intersection is $F.$     Now add two new vertices $x$ and $y$ which cone off these two PL-spheres and call the new complex $\complet{\Delta}'.$  Counting  edges shows that $g_2(\complet{\Delta}') = g_2(\complet{\Delta}) -1.$    This is a contradiction since  $\complet{\Delta}$  has minimal $\tilde{g}_2$ and $\complet{\Delta}'$ is homeomorphic to $\complet{\Delta}$. 
To see that $\complet{\Delta}'$ is homeomorphic to $\complet{\Delta}$ we note that $\st w$ and $\st x \cup \st y$ are homeomorphic since they are both $(d-1)$-dimensional PL-balls.  
 

Now assume  $\Delta$ contains no missing facets. We start by observing that a  stacked sphere which is not the boundary of a simplex contains missing facets. Since no vertex link of $\Delta$ can have a missing facet, the previous proposition implies that  every vertex of $\Delta$ is a boundary vertex and its link is a stacked sphere with one vertex removed.   Hence the link of a vertex $w$ of $\Delta$  can be written as $(\Sm_1 \# \cdots \# \Sm_m) - v,$ where the $\Sm_i$ are boundaries of $(d-1)$-simplices.   Of course, $v$ is $v_0$ --- the vertex added to form the completion of $\Delta.$   
It must be the case that $v$ is in every $\Sm_i.$ Otherwise there would be a missing facet in the link of $w$.  But now the union of (images) of $\Sm_i-v$ ($i=1,\ldots,m$) is a stacking of the link of $w$ which proves that the link of $w$ is a stacked ball.  Since all of the links of vertices of $\Delta$ are stacked balls, $\Delta$ is a stacked homology manifold. Indeed, if $F\in\Delta$ were an interior face of $\Delta$ of codimension $\geq 2$, then for any $w\in F$, $F-w$ would be an interior face of codimension $\geq 2$ of the link of $w$. 
   \end{proof}
 
 \begin{remark} All  $\Delta_i$ in the statement of Theorem \ref{minimal-tilde-g} have a nonempty connected boundary.
 \end{remark}
 \noindent   Theorem \ref{minimal-tilde-g} allows a description of the possible topological types of $\Delta$ such that $\complet{\Delta}$ has minimal $\tilde{g}_2.$  
 \begin{corollary}
   If $\Delta$ is a $(d-1)$-dimensional, connected, orientable  homology manifold such that  $d \ge 4$ and $\complet{\Delta}$ has minimal $\tilde{g}_2,$ then $\|\Delta\|$ is a ball, sphere,  orientable handlebody with boundary, orientable $S^{d-2}$-bundle over $S^1,$  or a connected sum of two or more of these which have a (possibly empty)  connected boundary.
    \end{corollary}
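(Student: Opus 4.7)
The plan is to combine Theorem \ref{minimal-tilde-g} with the known classification of closed orientable homology manifolds of minimal $g_2$ and with a topological reading of the two building operations it produces.

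First, if $\partial\Delta=\emptyset$, then $\complet{\Delta}=\Delta$ and $\tilde{g}_2(\complet{\Delta})=g_2(\Delta)$, so $\Delta$ has minimal $g_2$; by \cite[Theorem 5.3]{Murai-15} together with the structure theorem for Walkup's stacked class recalled in the text preceding Theorem \ref{global minimal g2}, $\|\Delta\|$ is then a sphere, an orientable $S^{d-2}$-bundle over $S^1$, or a connected sum of such pieces, which exhausts the boundaryless options in the claimed list.

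Now suppose $\partial\Delta\neq\emptyset$. I would invoke Theorem \ref{minimal-tilde-g} to obtain a sequence $\Delta_1\to\cdots\to\Delta_r=\Delta$ and induct on $r$. For the base case, the simplifying reductions in the proof of Theorem \ref{minimal-tilde-g} (no vertex has the boundary of a $(d-1)$-simplex as its link, and $\Delta_1$ has no missing facets) combined with the stacked-ball vertex links furnished by Proposition \ref{links of minimal tilde_g} force $\Delta_1$ to be a single $(d-1)$-simplex, so $\|\Delta_1\|$ is a $(d-1)$-ball, i.e.\ a trivial orientable handlebody. For the inductive step I would treat the two operations separately. If $\Delta_{j+1}=\Delta_j\#\Gamma$ with $\partial\Gamma=\emptyset$, then $\Gamma$ is a connected orientable closed $\field$-homology manifold of minimal $g_2$, so by the closed case already handled $\|\Gamma\|$ is a sphere or a connected sum of $S^{d-2}$-bundles over $S^1$; hence $\|\Delta_{j+1}\|\cong\|\Delta_j\|\#\|\Gamma\|$ stays in the claimed class, absorbing $S^{d-1}$-summands as usual. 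If instead $\Delta_{j+1}=\Delta_j^\#$, then $\|\Delta_{j+1}\|$ is obtained from $\|\Delta_j\|$ by a single orientable $1$-handle attachment, which either raises the handlebody genus of $\|\Delta_j\|$ or introduces a new $S^{d-2}$-bundle-over-$S^1$ summand; either outcome keeps us in the claimed class. Connectedness of the boundary of every $\Delta_i$, recorded in the remark after Theorem \ref{minimal-tilde-g}, is preserved throughout, giving the required connected boundary in the conclusion.

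The main obstacle I expect is the topological interpretation of $\Delta_j^\#$: the two identified facets are interior $(d-1)$-simplices of $\Delta_j$, so verifying that this combinatorial gluing realizes a standard orientable $1$-handle attachment, rather than some more exotic surgery, requires a careful local analysis. I would handle this either by a direct Mayer--Vietoris and van Kampen computation on a regular neighborhood of the identified facets, or, more cleanly, by lifting the operation to the completions $\complet{\Delta_j}\to\complet{\Delta_{j+1}}$, where the analogous statement on closed complexes with a single singular vertex follows from Walkup's original handle-addition argument, and then transferring the conclusion back to $\Delta_j\to\Delta_{j+1}$ by excising a regular neighborhood of $v_0$.
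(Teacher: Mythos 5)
Your overall route (the decomposition of Theorem \ref{minimal-tilde-g} plus the classification of closed stacked manifolds, followed by a topological reading of connected sum and handle addition) is exactly the intended one, but your base case contains a genuine error. Theorem \ref{minimal-tilde-g} only asserts that $\Delta_1$ is a stacked homology manifold with boundary; it does not, and cannot, force $\Delta_1$ to be a single $(d-1)$-simplex. The conditions you cite (no vertex link equal to the boundary of a $(d-1)$-simplex, no missing facets, stacked-ball links) are all satisfied, for instance, by two $(d-1)$-simplices glued along a common facet, and more generally by stacked triangulations of positive-genus handlebodies with no missing facets. This matters beyond pedantry: the ``orientable handlebody with boundary'' entries in the corollary's list arise precisely from the base case, not from the handle additions. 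The correct way to finish the base case is to observe that a stacked homology manifold with boundary is a triangulation with $\bar{h}''_2=0$ (Theorem \ref{h''=0}), is PL (its vertex links are stacked balls/spheres), and hence by Theorem \ref{h''=0 -> handle decomposition} admits a PL-handle decomposition with handles of index $\le 1$; being connected and orientable, $\|\Delta_1\|$ is therefore an orientable handlebody (a ball in genus zero).

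A secondary inaccuracy is your description of the step $\Delta_{j+1}=\Delta_j^\#$: the alternative ``raises the handlebody genus of $\|\Delta_j\|$'' cannot occur. As recorded in the paper, $\partial \Delta_j^\# = \partial \Delta_j$, so the operation leaves the boundary untouched; raising handlebody genus would change the boundary. The combinatorial handle addition removes two open facets and glues their boundary $(d-2)$-spheres, i.e.\ it is an interior $0$-surgery, and in the orientable setting it yields $\|\Delta_j\|\#(S^{d-2}\times S^1)$ (this is where your suggested regular-neighborhood/completion analysis is the right tool, since the identified facets may meet $\partial\Delta_j$). This slip does not push you outside the claimed class, but combined with the faulty base case it means your argument, as written, never actually produces the handlebody summands and rests on a false claim about $\Delta_1$; with the base case repaired as above, the induction goes through as you outline.
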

 
\begin{problem}
Is there a similar decomposition for minimal $\tilde{g}_i$ when $i \ge 3?$
\end{problem}

{\small
\bibliography{manifolds-biblio}

\begin{thebibliography}{10}

\bibitem{Adiprasito-g}
Karim Adiprasito.
\newblock Combinatorial { L}efschetz theorems beyond positivity.
\newblock Preprint available at \texttt{https://arxiv.org/abs/1812.10454},
  2018.

\bibitem{Athanasiadis-11}
Christos~A. Athanasiadis.
\newblock Some combinatorial properties of flag simplicial pseudomanifolds and
  spheres.
\newblock {\em Ark. Mat.}, 49(1):17--29, 2011.

\bibitem{Babson-Novik}
Eric Babson and Isabella Novik.
\newblock Face numbers and nongeneric initial ideals.
\newblock {\em Electron. J. Combin.}, 11(2):Research Paper 25, 23, 2004/06.

\bibitem{Bagchi:mu-vector}
Bhaskar Bagchi.
\newblock {The mu vector, Morse inequalities and a generalized lower bound
  theorem for locally tame combinatorial manifolds}.
\newblock {\em European J. Combin.}, 51:69--83, 2016.

\bibitem{Bagch-Datta-LBT}
Bhaskar Bagchi and Basudeb Datta.
\newblock Lower bound theorem for normal pseudomanifolds.
\newblock {\em Expo. Math.}, 26(4):327--351, 2008.

\bibitem{Bagchi-Datta-14}
Bhaskar Bagchi and Basudeb Datta.
\newblock On stellated spheres and a tightness criterion for combinatorial
  manifolds.
\newblock {\em European J. Combin.}, 36:294--313, 2014.

\bibitem{Basak-Swartz}
B.~Basak and E.~Swartz.
\newblock Three-dimensional pseudomanifolds with relatively few edges.
\newblock Preprint available at \texttt{https://arxiv.org/pdf/1803.08942.pdf},
  2018.

\bibitem{Billera-Lee-81}
Louis~J. Billera and Carl~W. Lee.
\newblock A proof of the sufficiency of {M}c{M}ullen's conditions for
  {$f$}-vectors of simplicial convex polytopes.
\newblock {\em J. Combin. Theory Ser. A}, 31(3):237--255, 1981.

\bibitem{Bing}
R.~H. Bing.
\newblock Some aspects of the topology of {$3$}-manifolds related to the
  {P}oincar\'{e} conjecture.
\newblock In {\em Lectures on modern mathematics, {V}ol. {II}}, pages 93--128.
  Wiley, New York, 1964.

\bibitem{Brehm-Kuhnel-87}
Ulrich Brehm and Wolfgang K{\"u}hnel.
\newblock Combinatorial manifolds with few vertices.
\newblock {\em Topology}, 26(4):465--473, 1987.

\bibitem{DattaMurai}
Basudeb Datta and Satoshi Murai.
\newblock On stacked triangulated manifolds.
\newblock {\em Electron. J. Combin.}, 24(4):Paper 4.12, 14, 2017.

\bibitem{Grabe-84}
Hans-Gert Gr{\"a}be.
\newblock The canonical module of a {S}tanley-{R}eisner ring.
\newblock {\em J. Algebra}, 86(1):272--281, 1984.

\bibitem{Kalai-87}
Gil Kalai.
\newblock Rigidity and the lower bound theorem. {I}.
\newblock {\em Invent. Math.}, 88(1):125--151, 1987.

\bibitem{Kuhnel-86}
Wolfgang K{\"u}hnel.
\newblock Higher-dimensional analogues of {C}z\'asz\'ar's torus.
\newblock {\em Results Math.}, 9(1-2):95--106, 1986.

\bibitem{Kuhnel-95}
Wolfgang K{\"u}hnel.
\newblock {\em Tight polyhedral submanifolds and tight triangulations}, volume
  1612 of {\em Lecture Notes in Mathematics}.
\newblock Springer-Verlag, Berlin, 1995.

\bibitem{Kuhnel-Lassmann-96}
Wolfgang K{\"u}hnel and Gunter Lassmann.
\newblock Permuted difference cycles and triangulated sphere bundles.
\newblock {\em Discrete Math.}, 162(1-3):215--227, 1996.

\bibitem{Lutz-05}
Frank~H Lutz.
\newblock Triangulated manifolds with few vertices: Combinatorial manifolds.
\newblock Preprint available at
  \texttt{http://arxiv.org/pdf/math/0506372v1.pdf}, 2005.

\bibitem{McMullen-71}
P.~McMullen.
\newblock The numbers of faces of simplicial polytopes.
\newblock {\em Israel J. Math.}, 9:559--570, 1971.

\bibitem{Murai-10:Shifting}
Satoshi Murai.
\newblock Algebraic shifting of strongly edge decomposable spheres.
\newblock {\em J. Combin. Theory Ser. A}, 117(1):1--16, 2010.

\bibitem{Murai-15}
Satoshi Murai.
\newblock Tight combinatorial manifolds and graded betti numbers.
\newblock {\em Collect. Math.}, 2015.

\bibitem{Murai-Nevo-14}
Satoshi Murai and Eran Nevo.
\newblock On {$r$}-stacked triangulated manifolds.
\newblock {\em J. Algebraic Combin.}, 39(2):373--388, 2014.

\bibitem{Murai-Novik-fundam-gp}
Satoshi Murai and Isabella Novik.
\newblock Face numbers and the fundamental group.
\newblock {\em Israel J. Math.}, 222(1):297--315, 2017.

\bibitem{Murai-Novik-bdry}
Satoshi Murai and Isabella Novik.
\newblock Face numbers of manifolds with boundary.
\newblock {\em Int. Math. Res. Not. IMRN}, (12):3603--3646, 2017.

\bibitem{Murai-Novik-Yoshida}
Satoshi Murai, Isabella Novik, and Ken-ichi Yoshida.
\newblock A duality in {B}uchsbaum rings and triangulated manifolds.
\newblock {\em Algebra Number Theory}, 11(3):635--656, 2017.

\bibitem{Novik-98}
Isabella Novik.
\newblock Upper bound theorems for homology manifolds.
\newblock {\em Israel J. Math.}, 108:45--82, 1998.

\bibitem{Novik-Swartz-09:DS}
Isabella Novik and Ed~Swartz.
\newblock Applications of {K}lee's {D}ehn-{S}ommerville relations.
\newblock {\em Discrete Comput. Geom.}, 42(2):261--276, 2009.

\bibitem{Novik-Swartz-09:Gorenstein}
Isabella Novik and Ed~Swartz.
\newblock Gorenstein rings through face rings of manifolds.
\newblock {\em Compos. Math.}, 145(4):993--1000, 2009.

\bibitem{Novik-Swartz-09:Socles}
Isabella Novik and Ed~Swartz.
\newblock Socles of {B}uchsbaum modules, complexes and posets.
\newblock {\em Adv. Math.}, 222(6):2059--2084, 2009.

\bibitem{Novik-Swartz-12}
Isabella Novik and Ed~Swartz.
\newblock Face numbers of pseudomanifolds with isolated singularities.
\newblock {\em Math. Scand.}, 110(2):198--222, 2012.

\bibitem{Rourke-Sanderson}
Colin~Patrick Rourke and Brian~Joseph Sanderson.
\newblock {\em Introduction to piecewise-linear topology}.
\newblock Springer Study Edition. Springer-Verlag, Berlin-New York, 1982.
\newblock Reprint.

\bibitem{Schenzel-81}
Peter Schenzel.
\newblock On the number of faces of simplicial complexes and the purity of
  {F}robenius.
\newblock {\em Math. Z.}, 178(1):125--142, 1981.

\bibitem{Stanley-75}
Richard~P. Stanley.
\newblock The upper bound conjecture and {C}ohen-{M}acaulay rings.
\newblock {\em Studies in Appl. Math.}, 54(2):135--142, 1975.

\bibitem{Stanley-80}
Richard~P. Stanley.
\newblock The number of faces of a simplicial convex polytope.
\newblock {\em Adv. in Math.}, 35(3):236--238, 1980.

\bibitem{Stanley-96}
Richard~P. Stanley.
\newblock {\em Combinatorics and commutative algebra}, volume~41 of {\em
  Progress in Mathematics}.
\newblock Birkh\"auser Boston, Inc., Boston, MA, second edition, 1996.

\bibitem{Swartz-09}
Ed~Swartz.
\newblock Face enumeration---from spheres to manifolds.
\newblock {\em J. Eur. Math. Soc. (JEMS)}, 11(3):449--485, 2009.

\bibitem{Swartz-14}
Ed~Swartz.
\newblock Thirty-five years and counting.
\newblock \texttt{http://arxiv.org/pdf/1411.0987.pdf}, 2014.

\bibitem{TayWhiteley}
Tiong-Seng Tay, Neil White, and Walter Whiteley.
\newblock Skeletal rigidity of simplicial complexes. {I}.
\newblock {\em European J. Combin.}, 16(4):381--403, 1995.

\bibitem{Walkup-70}
David~W. Walkup.
\newblock The lower bound conjecture for {$3$}- and {$4$}-manifolds.
\newblock {\em Acta Math.}, 125:75--107, 1970.

\bibitem{Whiteley-90}
Walter Whiteley.
\newblock Vertex splitting in isostatic frameworks.
\newblock {\em Structural Topology}, (16):23--30, 1990.
\newblock Dual French-English text.

\end{thebibliography}
\bibliographystyle{plain}
}
\end{document}